\newcommand{\Z}{\mathbb{Z}}
\newcommand{\N}{\mathbb{N}}
\newcommand{\R}{\mathbb{R}}
\newcommand{\Id}{\mathrm{Id}}
\newcommand{\Des}{\mathrm{Des}}
\newcommand{\major}{\mathrm{major}}
\newcommand{\natmajor}{\mathrm{natmajor}}
\newcommand{\fmajor}{\mathrm{fmajor}}
\newcommand{\natfmaj}{\mathrm{natfmaj}}
\newcommand{\des}{\mathrm{des}}
\newcommand{\Neg}{\mathrm{Neg}}
\newcommand{\NNeg}{\mathrm{NNeg}}
\newcommand{\nega}{\mathrm{neg}}
\newcommand{\col}{\mathrm{col}}
\newcommand{\natmaj}{\mathrm{natmaj}}
\newcommand{\NatDes}{\mathrm{NatDes}}
\newcommand{\DNatDes}{\mathrm{DNatDes}}
\newcommand{\StDes}{\mathrm{StDes}}
\newcommand{\NDes}{\mathrm{NDes}}
\newcommand{\DNDes}{\mathrm{DNDes}}
\newcommand{\maj}{\mathrm{maj}}
\newcommand{\natdes}{\mathrm{natdes}}
\newcommand{\dnatdes}{\mathrm{dnatdes}}
\newcommand{\stdes}{\mathrm{stdes}}
\newcommand{\ndes}{\mathrm{ndes}}
\newcommand{\dndes}{\mathrm{dndes}}
\newcommand{\nmaj}{\mathrm{nmajor}}
\newcommand{\dnmaj}{\mathrm{dnmajor}}
\newcommand{\fdes}{\mathrm{fdes}}
\newcommand{\cone}{\mathrm{cone}}
\newcommand{\stat}{\mathrm{stat}}
\renewcommand{\phi}{\varphi}
\def\nat{\operatorname{nat}}
\def\ch{\operatorname{ch}}
\def\th{^{\text{th}}}
\def\e{{\boldsymbol e}}
\def\m{{\boldsymbol m}}
\def\p{{\boldsymbol p}}
\def\s{{\boldsymbol s}}
\def\v{{\boldsymbol v}}
\def\x{{\boldsymbol x}}
\def\z{{\boldsymbol z}}
\def\0{{\boldsymbol 0}}
\newcommand{\commentout}[1]{}
\def\s{\epsilon}
\newtheorem{theorem}{Theorem}[section]
\newtheorem{corollary}[theorem]{Corollary}
\newtheorem{proposition}[theorem]{Proposition}
\newtheorem{lemma}[theorem]{Lemma}
\theoremstyle{remark}
\newtheorem{example}[theorem]{Example}
\newtheorem{remark}[theorem]{Remark}
\theoremstyle{definition}
\newtheorem{definition}[theorem]{Definition}
\begin{document}

\title{Euler--Mahonian Statistics Via Polyhedral Geometry}

\author{Matthias Beck}
\address{Department of Mathematics\\
         San Francisco State University\\
         San Francisco, CA 94132}
\email{mattbeck@sfsu.edu}

\author{Benjamin Braun}
\address{Department of Mathematics\\
         University of Kentucky\\
         Lexington, KY 40506--0027}
\email{benjamin.braun@uky.edu}

\date{21 May 2013}

\thanks{The authors would like to thank Ira Gessel, Carla Savage, and the anonymous referees for their valuable comments and suggestions. 
This research was partially supported by the NSF through grants DMS-0810105, DMS-1162638 (Beck) and DMS-0758321 (Braun), and by a SQuaRE at the American Institute of Mathematics.}


\begin{abstract}
A variety of descent and major-index statistics have been defined for symmetric groups, hyperoctahedral groups, and their generalizations.
Typically associated to a pair of such statistics is an \emph{Euler--Mahonian distribution}, a bivariate polynomial encoding the statistics; such distributions often appear in rational bivariate generating-function identities.
We use techniques from polyhedral geometry to establish new multivariate identities generalizing those giving rise to many of the known Euler--Mahonian distributions.
The original bivariate identities are then specializations of these multivariate identities.
As a consequence of these new techniques we obtain bijective proofs of the equivalence of the bivariate distributions for various pairs of statistics.
\end{abstract}


\maketitle

\tableofcontents


\section{Introduction}

The symmetric group $S_n$ is the group of permutations of $[n]:=\{1,2,\ldots,n\}$, also realized as the Coxeter group $A_{n-1}$ yielding symmetries of a simplex.
For a permutation $\pi \in S_n$, the descent set is a classical object of study in combinatorics.

\begin{definition}
For $\pi\in S_n$, the \emph{descent set} of $\pi$ is
\[
\Des(\pi) := \bigl\{ j \in [n-1] : \, \pi(j) > \pi(j+1) \bigr\} \, .
\]
The \emph{descent statistic} is $\des (\pi) := \# \Des(\pi)$.
\end{definition}
The descent statistic is encoded in the \emph{Eulerian polynomial} $\sum_{ \pi \in S_n } t^{ \des (\pi) }$ and the most basic identity for Eulerian polynomials is
\begin{equation}\label{euleriangenfcteq}
  \sum_{ k \ge 0 } (k+1)^n \, t^k = \frac{ \sum_{ \pi \in S_n } t^{ \des (\pi) } }{ \left( 1 - t \right)^{ n+1 } } \, .
\end{equation}
Euler used this identity to \emph{define} Eulerian polynomials in \cite{eulereulerian} which he needed in his study of what is now called the Riemann $\zeta$-function; it is unlikely that he was aware of the connection of his polynomials to descent statistics.
For more on the interesting history regarding Eulerian polynomials, descent statistics, and algebraic geometry, see \cite{hirzebrucheulerian} and \cite[Chapter 1 Notes]{stanleyec1}.

Equation \eqref{euleriangenfcteq} has inspired a host of generalizations and extensions.
The first such extension is the following $q$-analogue of \eqref{euleriangenfcteq}, which in this form is due to Carlitz \cite{carlitzeulerian}, though with some effort one can derive it from the works of MacMahon \cite[Volume 2, Chapter IV, \S462]{macmahon}.
This extension involves a joint distribution of the descent statistic and the major index, defined as follows, together with the notation  $[m]_q := 1 + q + q^2 + \dots + q^{ m-1 }$. 
\begin{definition}
For $\pi \in S_n$, the \emph{major index} of $\pi$ is 
\[
\maj(\pi) := \sum_{ j \in \Des(\pi) } j \, .
\]
\end{definition}

\begin{theorem}[Carlitz] \label{macmahoncarlitz}
\[
  \sum_{ k \ge 0 } [k+1]_q^n \, t^k
  = \frac{ \sum_{ \pi \in S_n } t^{ \des (\pi) } q^{ \maj (\pi) } }{ \prod_{ j=0 }^n \left( 1 - t q^j \right) } \, .
\]
\end{theorem}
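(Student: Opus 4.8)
The plan is to show that both sides enumerate weighted lattice points in the nonnegative orthant $\R_{\ge 0}^n$, decomposed according to the type $A$ braid arrangement. First I would expand the left-hand side. Since $[k+1]_q = 1 + q + \cdots + q^k$, raising to the $n$-th power gives $[k+1]_q^n = \sum q^{a_1 + \cdots + a_n}$ summed over all $\a = (a_1,\dots,a_n)$ with $0 \le a_i \le k$. Summing the geometric series in $t$ over all $k \ge \max_i a_i$ then converts the left-hand side into
\[
\sum_{k \ge 0} [k+1]_q^n \, t^k = \frac{1}{1-t} \sum_{\a \in \Z_{\ge 0}^n} q^{\, a_1 + \cdots + a_n} \, t^{\, \max_i a_i} \, ,
\]
a generating function over the lattice points of the orthant, weighted by $q^{|\a|} t^{\max}$.

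Next I would partition the orthant into simplicial cones indexed by permutations. Each $\a \in \Z_{\ge 0}^n$ determines a unique $\pi \in S_n$ by sorting its coordinates into weakly decreasing order, breaking ties by requiring $\pi(j) < \pi(j+1)$ when $a_{\pi(j)} = a_{\pi(j+1)}$; this yields $a_{\pi(1)} \ge \cdots \ge a_{\pi(n)} \ge 0$ with the inequality $a_{\pi(j)} > a_{\pi(j+1)}$ strict exactly when $j \in \Des(\pi)$. This is precisely the half-open decomposition of the orthant by the braid arrangement, with the descent set recording which bounding facets are included in each chamber.

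For a fixed $\pi$ I would pass to the coordinates $b_j := a_{\pi(j)} - a_{\pi(j+1)}$ for $j < n$ and $b_n := a_{\pi(n)}$, a unimodular change of variables identifying the cone with a nonnegative orthant in which the constraints read $b_j \ge 1$ for $j \in \Des(\pi)$ and $b_j \ge 0$ otherwise. Since $a_{\pi(j)} = b_j + \cdots + b_n$, the weights transform into $|\a| = \sum_{j=1}^n j\, b_j$ and $\max_i a_i = \sum_{j=1}^n b_j$. Writing $b_j = c_j + 1$ on the descent coordinates to absorb the strict inequalities pulls out the factor $q^{\maj(\pi)} t^{\des(\pi)}$, and the remaining free geometric sum over $\c \in \Z_{\ge 0}^n$ contributes $\prod_{j=1}^n (1 - t q^j)^{-1}$. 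Summing over all $\pi \in S_n$ and reincorporating the prefactor $\frac{1}{1-t} = (1 - t q^0)^{-1}$ then produces exactly the right-hand side.

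The main obstacle is setting up the half-open cone decomposition precisely: one must fix the strict-versus-weak boundary assignments so that every lattice point of the orthant lies in exactly one chamber and so that the descent set emerges naturally from the tie-breaking rule. Once that is in place, the key verification is that the unimodular substitution transports the two geometric weights into precisely $\maj(\pi)$ and $\des(\pi)$; after that, the remaining summations are routine evaluations of geometric series.
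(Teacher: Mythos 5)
Your proposal is correct and is essentially the paper's own argument: your half-open decomposition of the orthant by the braid arrangement, with strict inequalities imposed exactly at the positions of $\Des(\pi)$ (read as the chamber's defining conditions, as your facet remark makes clear), is just the cone over the paper's disjoint triangulation of $[0,1]^n$ into the simplices $\Delta_\pi$, and your weight $t^{\max_i a_i}$ together with the prefactor $(1-t)^{-1}$ is precisely the $z_0$-grading of $\cone([0,1]^n)$ specialized at $t=z_0$, $q=z_1=\cdots=z_n$. Your substitution $b_j = a_{\pi(j)} - a_{\pi(j+1)}$ expresses each lattice point in terms of the unimodular ray generators $\e_0+\e_{\pi(1)}+\cdots+\e_{\pi(j)}$, i.e., it re-derives Lemma~\ref{conelemma} by hand; the only difference from the paper is that you specialize to two variables at the outset rather than first recording the multivariate identity of Theorem~\ref{Atheorem} and then setting $t:=z_0$, $q:=z_1=\cdots=z_n$.
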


Note that \eqref{euleriangenfcteq} follows from Theorem~\ref{macmahoncarlitz} by setting $q=1$. 
This identity is called the \emph{Carlitz identity}, and the numerator on the right is known as an \emph{Euler--Mahonian distribution} due to the relation with Euler's work and MacMahon's original introduction of the major index.
The search for further generalizations of this identity has focused on finding new identities of the form 
\begin{equation}\label{statformgenfn}
  \sum_{ k \ge 0 } [k+1]_q^n \, t^k
  = \frac{ \sum_{ g \in G_n } t^{ \stat_1 (g) } q^{ \stat_2 (g) } }{ \prod_{ j=0 }^n h_j(t,q) } \,
\end{equation}
for various families of groups $G_n$ and statistics $\stat_1$ and $\stat_2$ defined on elements of $G_n$, together with naturally occuring families of functions $h_j(t,q)$.
This search has been successful, also producing analogous generalizations of the identities \eqref{wreatheulerianfcteq} and \eqref{deuleriangenfcteq} discussed in the next section.
To our knowledge, there are three general approaches to proving such identities:  
\begin{itemize}
\item via combinatorial/bijective proofs in the theory of partitions and their extensions;
\item via connections between permutation statistics and the theory of Coxeter groups, including connections to invariant theory and the coinvariant algebra of a Weyl group; and
\item via the theory of symmetric/quasisymmetric functions.
\end{itemize}
For more information regarding the first two approaches, see the citations listed throughout this paper.
For examples of the symmetric/quasisymmetric function approach, see \cite{hyatt,mendesremmel,shareshianwachseulerian}.

Our goal is to provide new multivariate generalizations of these identities using polyhedral geometry and lattice-point enumeration; as a consequence, we obtain new proofs of two-variable identities in the form of \eqref{statformgenfn}.
One of the benefits of the geometric approach is that it is relatively simple, the key ingredients being the triangulation of the unit cube by the braid arrangement together with careful choices of ray generators for unimodular cones.
Another benefit is that bijective proofs of the equidistribution of various pairs of statistics are obtained as immediate corollaries.

As we discuss in Remark~\ref{hilbertseries}, our multivariate identities can be viewed as Hilbert-series identities for various finely-graded algebras, i.e., algebras equipped with an $\N^n$-grading.
A Hilbert-series approach to multivariate extensions of these identities has previously been used in \cite{ABRmultivariate} and subsequent papers, emphasizing the use of descent bases for coinvariant algebras.
Our algebras and specializations are in some sense more straightforward than the previously considered ones, because the bivariate identities arise as simple specializations of our multivariate identities, requiring minimal or no additional substitutions and algebraic manipulations.
The geometric perspective also allows us to avoid the use of straightening laws and other algebraic techniques regarding coinvariant algebras.

Our paper is structured as follows.  
In Section~\ref{singlevariablebackground}, we discuss analogues of \eqref{euleriangenfcteq} for generalizations of permutation groups.
In Section~\ref{geometry}, we discuss the results we will need from integer-point enumeration and polyhedral geometry.
In Section~\ref{asection}, we use polyhedral geometry to prove Theorem~\ref{Atheorem}; this proof serves as a model for all the proofs in the paper.
We also briefly discuss connections between our approach, the theory of $P$-partitions, and the theory of affine semigroup algebras.

Section~\ref{wreathsection} contains most of our new results in the general setting of wreath products of the form $\Z_r\wr~S_n$.
These results generalize known bivariate identities due to Bagno, Bagno--Biagioli, and Chow--Mansour, which are themselves
generalizations of type-$B$ results due to Adin--Brenti--Roichman and Chow--Gessel.
As these original type-$B$ results have been of particular interest, we state our multivariate identities in this special case in Section~\ref{bigbsection}.
Also in Section~\ref{bigbsection} is a type-$B$ extension of an identity due to Chow--Gessel, one which in our approach relies heavily on the type-$B$ Coxeter arrangement; we do not know of an obvious extension of this to the wreath product case.
We close the paper with Section~\ref{dsection}, where we prove 
new type-$D$ generating-function identities.


\section{Generalized permutation groups and descents}\label{singlevariablebackground}

We discuss in this section analogues of \eqref{euleriangenfcteq} for hyperoctahedral groups, type-$D$ Coxeter groups, and wreath products of cyclic groups with symmetric groups.


The wreath product $\Z_r \wr S_n$ of a cyclic group of order $r$ with $S_n$ consists of pairs $(\pi,\s)$ where $\pi \in S_n$ and $\s\in \{\omega^{0},\omega^{1}, \ldots, \omega^{r-1}\}^n$ for $\omega:=e^{2\pi i / r}$ a primitive $r\th$ root of unity, see \cite{JamesKerber}.
Thus, $\s$ is a sequence of powers of an $r\th$ root of unity.
Elements of these groups are often called \emph{colored}, or \emph{indexed}, permutations.

\begin{remark}
By convention, for elements of $\Z_r \wr S_n$ we define additional values of $\pi$ and $\s$ as follows: $\pi_{n+1}:=n+1$, $\s_{n+1}:=1$, $\pi_{0}:=0$, and $\s_{0}:=1$.
\end{remark}

We will find it convenient to use \emph{window notation} for elements of wreath products.
If $\s_j=\omega^{c_j}$, then we will denote $(\pi,\s)$ as the \emph{window} $[\pi(1)^{c_1} \, \pi(2)^{c_2} \, \cdots \, \pi(n)^{c_n}]$.
We use the notation $j^{c_j}$ and $(\omega^{c_j},j)$ interchangeably for elements of $\{\omega^0,\omega^1,\ldots,\omega^{r-1}\}\times [n]$.
It is sometimes convenient to refer to $\pi(j)^{c_j}$ as $\pi(j)$ with \emph{color} $c_j$.

Because we will need to use inverses for these group elements, and for the sake of clarity, we review the algebraic structure of wreath products.
The element $(\pi,\s)\in\Z_r \wr S_n$ can be identified with the permutation matrix for $\pi$ where the $1$ in position $(\pi(i),i)$ is replaced by $\s_i$.
The group operation in $\Z_r \wr S_n$ is then given by matrix multiplication where entry-by-entry multiplication of non-zero terms is given by the group operation in $\Z_r$.

We next consider the special case of the hyperoctahedral group $B_n$, i.e., the Coxeter group yielding symmetries of a $\pm 1$-cube.
The group $B_n$ arises as the wreath product $\Z_2\wr S_n$ and thus consists of \emph{signed permutations} (see, e.g., \cite{reinersignedpermutationstats}), i.e., pairs $(\pi, \s)$ where $\pi \in S_n$ and $\s \in \left\{ \pm 1 \right\}^n$.
Because of this structure, it is common to associate the elements of $B_n$ to permutations $g$ of $[-n,n]\setminus \{0\}$ satisfying $g(-i)=-g(i)$ via the following map.
To the element $(\pi,\s)\in B_n$ we assign the set permutation $g_{(\pi,\s)}$ given by
\[
g(i)=\s_i\pi(i) \, .
\]
Thus, we will interchangeably write $j^1$ and $-j$ when using window notation and in definitions.

\begin{example}\label{BnCompEx}
In $B_4=\Z_2\wr~S_4$, the composition $[4^1 \, 1 \, 2^1 \, 3^1] \circ [3 \, 1^1 \, 4^1 \, 2]$ is equal to $[2^1 \, 4 \, 3 \, 1]$, since this composition maps, for example, $2\mapsto 1^1$ via $[3 \, 1^1 \, 4^1 \, 2]$ and $1^1\mapsto 4$ via $[4^1 \, 1 \, 2^1 \, 3^1]$, yielding
\[
2\mapsto 1^1 \mapsto (4^1)^1 =4 \, .
\]
This takes the matrix multiplication form
\[
\left[
\begin{array}{cccc}
0  & 1 & 0  & 0 \\
0  & 0 & -1 & 0 \\
0  & 0 & 0  & -1 \\
-1 & 0 & 0  & 0 \\
\end{array}
 \right]
\left[
\begin{array}{cccc}
0 & -1 & 0  & 0 \\
0 & 0  & 0  & 1 \\
1 & 0  & 0  & 0 \\ 
0 & 0  & -1 & 0 \\
\end{array}
 \right]
=
\left[
\begin{array}{cccc}
0  & 0 & 0 & 1 \\
-1 & 0 & 0 & 0  \\
0  & 0 & 1 & 0 \\ 
0  & 1 & 0 & 0 \\
\end{array}
 \right]
\]
The element $[4^1 \, 1 \, 2^1 \, 3^1]^{-1}$ is given by $[2 \, 3^1 \, 4^1 \, 1^1]$, since, for example, $[4^1 \, 1 \, 2^1 \, 3^1]$ sends $3\mapsto 2^1$, requiring that the inverse send $2\mapsto 3^1$.
\end{example}

For elements of $B_n$, there are several definitions of descents in the literature; we provide three of them here.
While the first applies to $B_n$, the latter two are defined for all $\Z_r \wr S_n$.

\begin{definition}\label{BnNatDes}
For an element $(\pi,\s)\in B_n$, the \emph{naturally ordered descent set} is
\begin{equation}\label{Bdescentdef}
  \NatDes(\pi, \s) := \bigl\{ j \in \left\{ 0, 1, \dots, n-1 \right\} : \, \s_j \pi(j) > \s_{ j+1 } \pi(j+1) \bigr\} \, ,
\end{equation}
with the convention $\s_0 \pi(0) = 0$.
The \emph{natural descent statistic} for $B_n$ is $\natdes(\pi,\s):=\#\NatDes(\pi,\s)$.
\end{definition}
The reason for calling this the \emph{naturally} ordered descent set is that it uses the natural order $-n<-n+1<\cdots<-1<1<2<\cdots<n$ on the integers.
For example, the permutation 
\[
[3^1 \, 2 \, 1^1 \, 4^1 ]
\]
in $B_4$ has descents in the zeroth, second, and third positions.

In \cite{Steingrimsson}, Steingr{\'{\i}}msson defined the following descent set for elements of $\Z_r \wr S_n$.

\begin{definition}\label{BnStDes}
Totally order the elements of $\{\omega^0,\omega^1,\ldots,\omega^{r-1}\}\times [n]$ by $j^{c_j}<k^{c_k}$ if $c_j<c_k$ or if both $c_j=c_k$ and $j<k$ hold.
For an element $(\pi, \s) = [\pi(1)^{c_1} \, \pi(2)^{c_2} \, \cdots \, \pi(n)^{c_n}]$ in $\Z_r \wr S_n$, \emph{Steingr{\'{\i}}msson's descent set} is 
\begin{equation}\label{Steinwreathdescentdef}
  \StDes(\pi, \s) := \bigl\{ j \in \left\{ 1, \dots, n \right\} : \, \pi(j)^{c_j}>\pi(j+1)^{c_{j+1}} \bigr\} .
\end{equation}
\emph{Steingr{\'{\i}}msson's descent statistic} is $\stdes (\pi,\s) := \#\StDes(\pi,\s)$.
\end{definition}

As an example, observe that with Steingr{\'{\i}}msson's ordering we have $\{\omega^0,\omega^1,\omega^{2}\}\times [3]$ ordered as
\[
1^0<2^0<3^0<1^1<2^1<3^1<1^2<2^2<3^2 \, ,
\]
and the permutation
\[
[2^2 \, 3^2 \, 1^1 ]
\]
has descents in positions $2$ and $3$.

Finally, we define the following closely-related descent set.
This definition differs from Steingr{\'{\i}}msson's both in the role played by the order of the roots of unity and in the indices where descents may occur.

\begin{definition}\label{BnDes}
Totally order the elements of $\{\omega^{r-1},\omega^{r-2},\ldots,\omega^{0}\}\times [n]$ by $j^{c_j}<k^{c_k}$ if $c_j>c_k$ or if both $c_j=c_k$ and $j<k$ hold.
For an element $(\pi, \s) = [\pi(1)^{c_1} \, \pi(2)^{c_2} \, \cdots \, \pi(n)^{c_n}]$ in $\Z_r \wr S_n$, the \emph{descent set} is 
\begin{equation}\label{wreathdescentdef}
  \Des(\pi, \s) := \bigl\{ j \in \left\{ 0, \dots, n-1 \right\} : \, \pi(j)^{c_j}>\pi(j+1)^{c_{j+1}} \bigr\} .
\end{equation}
The \emph{descent statistic} is $\des (\pi,\s) := \#\Des(\pi,\s)$.
\end{definition}

As an example, observe that with this order we have $\{\omega^0,\omega^1,\omega^{2}\}\times [3]$ ordered as
\[
1^2<2^2<3^2<1^1<2^1<3^1<1^0<2^0<3^0  \, ,
\]
and the permutation
\[
[3^2 \, 2^0 \, 1^1 ]
\]
has descents in positions $0$ and $2$.

The Eulerian polynomials for wreath products are $\sum_{(\pi,\s)\in \Z_r \wr S_n}t^{\des(\pi,\s)}$, where one may use either of the two wreath product descent definitions or, in the case $r=2$, the natural descent statistic.
The resulting analogue of \eqref{euleriangenfcteq} is
\begin{equation}\label{wreatheulerianfcteq}
  \sum_{ k \ge 0 } (rk+1)^n \, t^k
  = \frac{ \sum_{(\pi,\s)\in \Z_r \wr S_n}t^{\des(\pi,\s)} }{(1-t)^{n+1} } \, .
\end{equation}
This identity appears to have been found by various authors for different descent statistics; for more details, see \cite{brentieulerian,Steingrimsson}.


\section{A geometric perspective}\label{geometry}


\subsection{Simplices and cones}

The forms of equations \eqref{euleriangenfcteq}, \eqref{wreatheulerianfcteq}, and \eqref{deuleriangenfcteq} suggest that one should look at them geometrically as stemming from lattice-point enumeration of the cube $[0,r]^n$ as it is partitioned in various ways; for example, \eqref{euleriangenfcteq} suggests we consider $[0,1]^n$ partitioned by the braid arrangement consisting of the hyperplanes $x_j = x_k$ for $1 \le j < k \le n$. 
As a result of such partitions, we will encounter certain simplices throughout this work, all of which are (after a suitable change of variables) of the form
\[
  \Delta_I := \left\{ \x \in \R^n :
     \begin{array}{ll}
       0 \le x_n \le x_{n-1} \le \dots \le x_1 \le 1, \\
       x_{j+1} < x_{ j } \text{ if } j \in I
     \end{array}
  \right\} ,
\]
where $I \subseteq [n]$ is some index set, and we use the convention $x_n > 0$ if $n \in I$.

\begin{remark}
The definition of $\Delta_I$ we have given is technically that of a simplex with some of its facets removed.
Throughout this work, we will be decomposing cubes into disjoint unions of such objects; the removal of facets will be needed to ensure that our decompositions are disjoint.
In the following, to simplify nomenclature, we will freely refer to these partially open objects as \emph{simplices}.
Further, for a polyhedron $P$ with some facets removed, we will use the terms \emph{faces} and \emph{vertices} of $P$ to refer to the faces and vertices of the closure of $P$.
\end{remark}

The vertices of $\Delta_I$ are $\0, \, \e_1 + \dots + \e_n, \, \e_1 + \dots + \e_{n-1}, \dots, \, \e_{1 } + \e_2, \, \e_1$, where $\e_j$ is the $j$'th unit vector in $\R^n$.
Note that $\Delta_I$ is \emph{unimodular}, i.e., the $n$ edge directions at any vertex of $\Delta_I$ generate $\Z^n$.
The \emph{cone over} $\Delta_I$ is the nonnegative span of $\left\{ (1,\x) \in \R^{ n+1 } : \, \x \in \Delta_I \right\}$, where we encode the ``new" dimension by the variable $x_0$, i.e.,
\[
  \cone \left( \Delta_I \right) := \R_{ \ge 0 } \, \e_0 + \sum_{ j \in I } \R_{ >0 } \left( \e_0 + \e_{ 1 } + \e_{ 2 } + \dots + \e_j \right) + \sum_{ j \notin I } \R_{ \ge 0 } \left( \e_0 + \e_{1 } + \e_{ 2 } + \dots + \e_j \right) ,
\]
where the complement of $I$ is taken in $[n]$.


\subsection{Generating functions for cones}

Let
\[
  \sigma_C (z_0, z_1, \dots, z_n) := \sum_{ \m \in C \cap \Z^{ n+1 } } \z^\m
\]
be the multivariate (``full") generating function encoding the integer lattice points in a subset $C\subset \R^{n+1}$, where we have used the shorthand $\z^\m := z_0^{ m_0 } z_1^{ m_1 } \cdots z_n^{ m_n }$.
A standard geometric-series argument (see, e.g., \cite[Theorem 3.5]{ccd}), together with the unimodularity of $\cone \left( \Delta_I \right)$, gives the following.

\begin{lemma}\label{conelemma}
Let $\Delta_I$ be as above. Then \
\[
  \sigma_{ \cone \left( \Delta_I \right) } (z_0, z_1, \dots, z_n) = \frac{ \prod_{ j \in I } z_0 \, z_{ 1 } z_{ 2 } \cdots z_j }{ \prod_{ j=0 }^n \left( 1 - z_0 \, z_{ 1 } z_{ 2 } \cdots z_j \right) } \, .
\]
\end{lemma}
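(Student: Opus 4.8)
The plan is to read off the ray generators of the unimodular cone $\cone(\Delta_I)$, invoke unimodularity to obtain a unique lattice-point expansion in those generators, and then evaluate the generating function as a product of geometric series, one per ray.

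First I would record the generators explicitly. From the displayed description, $\cone(\Delta_I)$ is the half-open simplicial cone with ray generators $\v_0 := \e_0$ and $\v_j := \e_0 + \e_1 + \cdots + \e_j$ for $1 \le j \le n$, where the rays indexed by $j \in I$ are open (coefficient in $\R_{>0}$) and all remaining rays are closed (coefficient in $\R_{\ge 0}$). Arranging these $n+1$ vectors as the columns of a matrix produces a triangular matrix with unit diagonal entries, hence with determinant $1$; thus $\{\v_0, \dots, \v_n\}$ is a $\Z$-basis of $\Z^{n+1}$, which is exactly the unimodularity recorded just before the lemma.

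Next I would parametrize the lattice points. Because the $\v_j$ form a lattice basis, every $\m \in \cone(\Delta_I) \cap \Z^{n+1}$ admits a unique expansion $\m = \sum_{j=0}^n a_j \v_j$ with $a_j \in \Z$, and membership in the cone forces $a_j \ge 0$ for every $j$, with the strict condition $a_j \ge 1$ precisely when $j \in I$. This bijection between lattice points and admissible coefficient tuples is the substance of the geometric-series argument cited from \cite[Theorem 3.5]{ccd}. Since $\z^{\v_j} = z_0 z_1 \cdots z_j$, the generating function then factors as
\[
\sigma_{\cone(\Delta_I)}(z_0, \dots, z_n) = \left(\sum_{a_0 \ge 0} z_0^{a_0}\right) \prod_{j \in I} \left(\sum_{a_j \ge 1} (z_0 z_1 \cdots z_j)^{a_j}\right) \prod_{\substack{1 \le j \le n \\ j \notin I}} \left(\sum_{a_j \ge 0} (z_0 z_1 \cdots z_j)^{a_j}\right).
\]
Summing each geometric series contributes a factor $\frac{1}{1 - z_0 z_1 \cdots z_j}$ for every closed ray and $\frac{z_0 z_1 \cdots z_j}{1 - z_0 z_1 \cdots z_j}$ for every open ray; collecting the numerators arising from $j \in I$ and the common denominators over all $0 \le j \le n$ yields exactly the claimed expression.

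The step I expect to require the most care is the parametrization of lattice points in the \emph{half-open} cone, specifically the claim that openness of a ray becomes the integrality condition $a_j \ge 1$ rather than merely $a_j > 0$ in the reals. This is legitimate precisely because the generators form a lattice basis: the real coefficients of any lattice point are automatically integers, so $a_j > 0$ together with $a_j \in \Z$ forces $a_j \ge 1$. Everything else — the triangularity computation and the termwise geometric summation — is routine.
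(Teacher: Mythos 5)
Your proof is correct and is exactly the argument the paper invokes: the paper gives no detailed proof of Lemma~\ref{conelemma}, instead citing the standard geometric-series expansion for a half-open unimodular cone, which is precisely what you carry out (triangular generator matrix with determinant $1$, unique integral expansion forcing $a_j \ge 1$ on the open rays indexed by $I$, and termwise geometric summation). The one step you flag as delicate --- that openness of a ray plus integrality yields $a_j \ge 1$ --- is handled correctly via the lattice-basis property, so nothing is missing.
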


We will not always use the above natural way to write the generating function of a unimodular cone, in which case we will apply the following more general lemma.  
The proof is a straightforward extension of \cite[Theorem 3.5 and Corollary 3.6]{ccd} and \cite[Corollary 4.6.8 and its Note]{stanleyec1}; only the latter reference discusses the relationship between determinants and monomials stated here.

\begin{lemma}\label{genconelemma}
Let $C = \sum_{ j=0 }^k \R_{ \ge 0 } \v_j + \sum_{ j=k+1 }^n \R_{ > 0 } \v_j$ be a half-open simplicial cone in $\R^{ n+1 }$ with linearly independent generators $\v_0, \v_1, \dots, \v_n \in \Z^{ n+1 }$. Then
\[
  \sigma_C (z_0, z_1, \dots, z_n) = \frac{ \sigma_{ \Pi_C } (z_0, z_1, \dots, z_n) }{ \prod_{ j=0 }^n \left( 1 - \z^{ \v_j } \right) }
\]
where $\Pi_C := \sum_{ j=0 }^k [0,1) \v_j + \sum_{ j=k+1 }^n (0,1] \v_j$.
Furthermore, the number of integer points in $\Pi_C$ (and thus the number of monomials in $\sigma_{ \Pi_C } (z_0, z_1, \dots, z_n)$) is given by the determinant of the matrix with column vectors $\v_0, \v_1, \dots, \v_n$.
\end{lemma}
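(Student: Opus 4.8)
The plan is to prove Lemma~\ref{genconelemma} by the standard tiling argument for half-open simplicial cones, adapting the unimodular computation behind Lemma~\ref{conelemma} to the general (non-unimodular) case. First I would observe that since $\v_0,\dots,\v_n$ are linearly independent, the \emph{half-open parallelepiped}
\[
  \Pi_C = \sum_{ j=0 }^k [0,1) \v_j + \sum_{ j=k+1 }^n (0,1] \v_j
\]
is a fundamental domain for the lattice $\Lambda := \Z \v_0 + \dots + \Z \v_n$ acting on the cone $C$ by translation: every point of $C$ lies in exactly one translate $\p + \Pi_C$ with $\p \in \sum_{ j=0 }^k \Z_{ \ge 0 } \v_j + \sum_{ j=k+1 }^n \Z_{ > 0 } \v_j$. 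The choice of half-open faces (closed on the $[0,1)$ side for the $\Z_{\ge 0}$ generators, closed on the $(0,1]$ side for the $\Z_{>0}$ ones) is exactly what makes this a partition rather than merely a cover, and it is the point where one must be careful.

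Restricting the tiling to integer points, every $\m \in C \cap \Z^{ n+1 }$ decomposes uniquely as $\m = \p + \u$ where $\u \in \Pi_C \cap \Z^{ n+1 }$ and $\p = \sum_{ j=0 }^k a_j \v_j + \sum_{ j=k+1 }^n b_j \v_j$ with $a_j \in \Z_{ \ge 0 }$ and $b_j \in \Z_{ > 0 }$. Summing $\z^\m = \z^\u \z^\p$ over all such $\m$ then factors as
\[
  \sigma_C (z_0, \dots, z_n)
  = \left( \sum_{ \u \in \Pi_C \cap \Z^{ n+1 } } \z^\u \right) \prod_{ j=0 }^k \left( \sum_{ a \ge 0 } \z^{ a \v_j } \right) \prod_{ j=k+1 }^n \left( \sum_{ b \ge 1 } \z^{ b \v_j } \right) .
\]
Each geometric series evaluates to $\frac{ 1 }{ 1 - \z^{ \v_j } }$ for the closed generators and $\frac{ \z^{ \v_j } }{ 1 - \z^{ \v_j } }$ for the open ones; the extra factors $\z^{ \v_j }$ from the open generators get absorbed into $\sigma_{ \Pi_C }$ by reindexing, which matches the stated numerator exactly. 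This identity holds as formal power series (or on the region of convergence where each $\left| \z^{ \v_j } \right| < 1$), justified as in \cite[Theorem 3.5, Corollary 3.6]{ccd}.

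For the final claim, I would note that the number of lattice points in the half-open parallelepiped $\Pi_C$ equals its Euclidean volume, which is the absolute value of the determinant of the matrix whose columns are $\v_0, \dots, \v_n$; this is a classical fact about fundamental domains, and the count is independent of which faces are taken half-open. This is precisely the content cited from \cite[Corollary 4.6.8 and its Note]{stanleyec1}. The main obstacle is not the algebra, which is routine, but verifying that the half-open conventions genuinely produce a disjoint tiling: one must check that the boundary identifications are consistent, i.e., that each lattice point of $C$ is counted once and only once across the translates of $\Pi_C$. Since this bookkeeping is exactly what the references establish, the proof reduces to assembling the geometric-series factorization above and invoking them.
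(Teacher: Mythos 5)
Your intended route---tile the half-open cone by lattice translates of the fundamental parallelepiped and factor $\sigma_C$ into $\sigma_{\Pi_C}$ times geometric series---is exactly the standard argument behind the references; the paper itself gives no proof of this lemma, deferring to \cite{ccd} and \cite{stanleyec1}, so in spirit you are on the same track. However, your decomposition is stated incorrectly, and the error is not cosmetic. Because the strict positivity along the open generators is \emph{already} encoded in $\Pi_C$ via the intervals $(0,1]$, the translation vector must range over \emph{all} nonnegative integer combinations: writing $\lambda_j = b_j + \mu_j$ with $\mu_j \in (0,1]$ forces $b_j = \lceil \lambda_j \rceil - 1 \in \Z_{\ge 0}$, not $\Z_{>0}$. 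By imposing $b_j \ge 1$ for $j > k$ you build the openness in twice, and your translates fail to cover $C$: for instance, if $\v_n$ is the only open generator, then $\v_n \in C$, but $\v_n = \p + \u$ with $\u \in \Pi_C$ and $b_n \ge 1$ would force the $\v_n$-coefficient of $\v_n$ to exceed $1$. With the corrected range $b_j \in \Z_{\ge 0}$ for every $j$, all geometric series start at $0$, the product is $\prod_{j=0}^n \left( 1 - \z^{\v_j} \right)^{-1}$ on the nose, and no extra factors arise---the lemma follows immediately.

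The attempted repair---absorbing the factors $\prod_{j>k} \z^{\v_j}$ into $\sigma_{\Pi_C}$ ``by reindexing''---is precisely where the proof as written fails. Multiplying $\sigma_{\Pi_C}$ by $\z^{\boldsymbol w}$ with ${\boldsymbol w} = \sum_{j>k} \v_j$ produces the generating function of the \emph{translate} $\Pi_C + {\boldsymbol w}$, which is a different set of lattice points: for a non-unimodular cone the integer points of $\Pi_C$ generally have fractional coordinates with respect to $\v_0,\dots,\v_n$, so shifting by whole generators moves them out of the box rather than onto the opposite half-open facets. (The shift trick of Section~\ref{sec:unim} is legitimate only in the special situation treated there, where the cone is unimodular with scaled generators $c_j \v_j$ and all lattice points of the parallelepiped lie on the sublattice $\sum_j \Z \v_j$.) What your computation honestly proves is that your mis-stated tiling covers $C + {\boldsymbol w}$, yielding $\sigma_{C + {\boldsymbol w}} = \z^{\boldsymbol w}\,\sigma_{\Pi_C} \big/ \prod_{j=0}^n \left(1 - \z^{\v_j}\right)$; cancelling $\z^{\boldsymbol w}$ would salvage the lemma, but the claim that the numerator ``matches exactly'' after reindexing is false as stated. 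Your final paragraph is fine: $\Pi_C$ is a fundamental domain for $\sum_j \Z \v_j$, so its lattice-point count equals the index $\left[ \Z^{n+1} : \sum_j \Z \v_j \right]$, i.e.\ the determinant, independently of which facets are removed, as in \cite[Corollary 4.6.8]{stanleyec1}.
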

We refer to the set $\Pi_C$ arising in the lemma as the \emph{fundamental parallelepiped} of $C$; note that it depends on the choice of generators of~$C$.


\subsection{Unimodular cones with scaled ray generators}\label{sec:unim}

Throughout this work we will frequently need to compute $\sigma_{ \Pi_C } (z_0, z_1, \dots, z_n)$ for a unimodular cone $C$ of the form given in Lemma~\ref{genconelemma}, where the generators chosen for the cone are not the minimal length ray generators.
Using the notation of Lemma~\ref{genconelemma}, let $\v_0,\ldots,\v_n$ denote the minimal ray generators for a unimodular cone $C$, so that
\[
\sigma_{\overline{C}} (z_0, z_1, \dots, z_n) = \frac{ 1 }{ \prod_{ j=0 }^n \left( 1 - \z^{ \v_j } \right) } \, ,
\]
where $\overline{C}$ denotes the topological closure of $C$.
If we use instead the ray generators $c_0\v_0,c_1\v_1,\ldots,c_n\v_n$ for some positive integer scaling factors $c_0,c_1,\ldots,c_n$, we will desire in this paper to obtain the integer points in 
\[
\Pi_{C}=\sum_{ j=0 }^k [0,1) c_j\v_j~+~\sum_{ j=k+1 }^n (0,1] c_j\v_j
\]
from the integer points in 
\[
\Pi_{\overline{C}}=\sum_{ j=0 }^n [0,1) c_j\v_j \, .
\]
Since $C$ is unimodular with ray generators given by the $\v_j$'s, the integer points in $\Pi_{\overline{C}}$ are those integer points of the form
\[
\p=\sum_{j=0}^n \alpha_j\v_j
\]
where $0\leq \alpha_j< c_j$ is an integer.
Thus, there are $\prod_jc_j$ integer points contained in $\Pi_{\overline{C}}$.
Observe that $\p$ lies on the facet of $C$ opposite $\v_j$ if and only if $\alpha_j=0$.
Thus, each integer point $\p$ in the set $\Pi_{\overline{C}}$ with $\alpha_j=0$ for some indices $j\geq k+1$ does not lie in the set $\Pi_C$.
Similarly, each integer point in $\Pi_C$ of the form $\p=\sum_{j=0}^n \alpha_j\v_j$, such that $\alpha_j=c_j$ for some $j\geq k+1$, is not in $\Pi_{\overline{C}}$.
If we fix an index set $J\subseteq \{k+1,k+2,\ldots,n\}$, there is a bijective correspondence between the points
\[
\p_1=\sum_{j=0}^n \alpha_j\v_j\in \Pi_{\overline{C}}
\]
where $\alpha_j=0$ if $j\in J$ and the points
\[
\p_2=\sum_{j=0}^n \beta_j\v_j\in \Pi_C
\]
where $\beta_j=c_j$ if $j\in J$.
This bijection is obtained by identifying two such points when $\alpha_j=\beta_j$ for all $j\notin J$.

In the following, we will use one of the following two techniques to obtain the set $\Z^n\cap \Pi_C$ from $\Z^n\cap \Pi_{\overline{C}}$.
\begin{itemize}
\item \emph{Shifting integer points off the boundary:} Each integer point $\p\in\Pi_{\overline{C}}\setminus \Pi_C$ is of the form $\p=\sum_{j=0}^n \alpha_j\v_j$ where $0\leq \alpha_j< c_j$ and $\alpha_j=0$ for all $j\in J_{\p} \subset \{k+1,k+2,\ldots,n\}$ for some index set $J_{\p}$.
By shifting each such $\p$ by $\sum_{j\in J_{\p}}c_jv_j$, we obtain
\[
\Z^n\cap \Pi_C = \left[\Z^n\cap\Pi_{\overline{C}}\cap \Pi_C\right]\cup \left\{\p+\sum_{j\in J_{\p}}c_j\v_j: \p\in\Pi_{\overline{C}}\setminus \Pi_C \right\} .
\]

\item \emph{Shifting the entire parallelepiped:} Alternatively, we may observe that $\Pi_{\overline{C}}$ is a parallelepiped with half of its facets removed, where no two opposite pairs of facets are simultaneously removed.
Similarly, $\Pi_C$ is a parallelepiped of the same type, but with a different selection of included facets.
Thus, it is immediate that
\[
\Z^n\cap \Pi_C = \left[\Z^n\cap \Pi_{\overline{C}}\right]+\left(\sum_{\substack{i :\text{ the facet opposite}\\\v_i \text{ is removed in }C}} \v_i \right) .
\]

\end{itemize}

\begin{example}
Let $C=\{\x\in \R^3:0\leq x_3 <x_2 <x_1\}$.
Thus, $\overline{C}\subset \R^3$ is generated by $\v_1=(1,0,0)$, $\v_2=(1,1,0)$, and $\v_3=(1,1,1)$.
Using the ray generators $2\v_1$, $2\v_2$, and $3\v_3$ for $\overline{C}$, there are three integer points in $\Pi_C\cap \Pi_{\overline{C}}$ given by
\[
(2,1,0), (3,2,1), (4,3,2) \, .
\]
There are nine integer points in $\Pi_{\overline{C}}\setminus \Pi_C$, namely
\[
(0,0,0), (1,1,1), (2,2,2), (1,1,0), (1,0,0), (2,2,1), (3,3,2), (3,2,2), (2,1,1) \, .
\]
Similarly, there are nine integer points in $\Pi_C\setminus \Pi_{\overline{C}}$, namely
\[
(4,2,0), (5,3,1), (6,4,2), (3,1,0), (3,2,0), (4,3,1), (5,3,2), (5,4,2), (4,2,1) \, .
\]
It is straightforward to check that both of the shifting methods described above produce the integer points in $\Pi_C$ from the integer points in $\Pi_{\overline{C}}$.
Shifting off the boundary adds one or both of $(2,0,0)$ and $(2,2,0)$ to the points of $\Pi_{\overline{C}}\setminus \Pi_C$,
while shifting the parallelepiped adds $(2,1,0)$ to all the points of $\Pi_{\overline{C}}$.
\end{example}


\section{Type $A$}\label{asection}

We begin with a multivariate identity that specializes to Theorem~\ref{macmahoncarlitz}.
The proof of this identity, though simple, demonstrates the approach used in this paper.

\begin{theorem}\label{Atheorem}
\[
  \sum_{ k \ge 0 } \prod_{ j=1 }^n [k+1]_{ z_j } \, z_0^k
  \ = \ \sum_{ \pi \in S_n } \frac{ \prod_{ j \in \Des(\pi) } z_0z_{ \pi(1) } z_{ \pi(2) } \cdots z_{ \pi(j) }  }{ \prod_{ j=0 }^n \left( 1 - z_0 \, z_{ \pi(1) } z_{ \pi(2) } \cdots z_{ \pi(j) }  \right) } \, .
\]
\end{theorem}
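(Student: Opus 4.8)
The plan is to read both sides as multivariate lattice-point generating functions attached to the cone over the unit cube $[0,1]^n$, and then to evaluate the right-hand side by slicing that cone into unimodular half-open cones indexed by permutations. First I would rewrite the left-hand side geometrically. Expanding $[k+1]_{z_j}=\sum_{m=0}^{k}z_j^m$ gives
\[
\sum_{k\ge 0}\prod_{j=1}^n[k+1]_{z_j}\,z_0^k \ =\ \sum_{k\ge 0}\ \sum_{0\le m_1,\dots,m_n\le k} z_0^k z_1^{m_1}\cdots z_n^{m_n},
\]
which is exactly $\sigma_{\mathcal C}(z_0,\dots,z_n)$ for the cone $\mathcal C:=\{\x\in\R^{n+1}: x_0\ge 0,\ 0\le x_j\le x_0 \text{ for } j\in[n]\}$, i.e.\ the cone over $[0,1]^n$ lifted into the new coordinate $x_0$: its lattice points at height $x_0=k$ are precisely the integer points of $k[0,1]^n$.

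Next I would partition $[0,1]^n$ by the braid arrangement $\{x_a=x_b\}$. For $\pi\in S_n$ set
\[
P_\pi:=\bigl\{\x\in[0,1]^n: x_{\pi(1)}\ge x_{\pi(2)}\ge\cdots\ge x_{\pi(n)},\ \text{with } x_{\pi(j)}>x_{\pi(j+1)}\text{ whenever } j\in\Des(\pi)\bigr\}.
\]
Each $P_\pi$ is a copy of $\Delta_{\Des(\pi)}$ with coordinates relabeled by $\pi$, so it is a half-open unimodular simplex whose closure has vertices $\0$ and $\e_{\pi(1)}+\cdots+\e_{\pi(j)}$ for $1\le j\le n$. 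I claim the $P_\pi$ partition the cube: given $\x$, sort its coordinates weakly decreasingly, breaking ties by placing the smaller index first. This produces a unique $\pi$, and the tie-breaking rule forces strict inequality exactly at the descents of $\pi$, so $\x\in P_\pi$ and in no other $P_{\pi'}$ (a different $\pi'$ ordering a block of equal coordinates would place a larger index before a smaller one, creating a descent across an equality, which $P_{\pi'}$ forbids). Since $\Des(\pi)\subseteq[n-1]$, the bottom inequality $x_{\pi(n)}\ge 0$ is never strict, consistent with the convention of Lemma~\ref{conelemma}.

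Then I would lift this to a cone decomposition. Because the $P_\pi$ partition the cube, for every point of $\mathcal C$ with $x_0>0$ the ray through it meets a unique $P_\pi$, while the apex is controlled by the half-open structure: the facet $x_{\pi(j)}=x_{\pi(j+1)}$ removed at a descent $j$ contains the origin, so $\cone(P_\pi)$ excludes the origin unless $\Des(\pi)=\emptyset$, i.e.\ the origin is claimed only by the identity. Hence $\mathcal C=\bigsqcup_{\pi\in S_n}\cone(P_\pi)$ is a disjoint union and $\sigma_{\mathcal C}=\sum_\pi\sigma_{\cone(P_\pi)}$. Each $\cone(P_\pi)$ is the cone of Lemma~\ref{conelemma} with $\e_j$ replaced by $\e_{\pi(j)}$ and $I=\Des(\pi)$, the open rays coming from the strict inequalities at descents; applying the lemma to each summand yields
\[
\sigma_{\cone(P_\pi)}=\frac{\prod_{j\in\Des(\pi)} z_0\,z_{\pi(1)}\cdots z_{\pi(j)}}{\prod_{j=0}^n\bigl(1-z_0\,z_{\pi(1)}\cdots z_{\pi(j)}\bigr)},
\]
and summing over $\pi$ gives the right-hand side.

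The main obstacle I expect is the bookkeeping in the partition step: one must verify both that every point of the cube lands in exactly one $P_\pi$ and that the half-open structure of $\cone(P_\pi)$ — the open rays dictated by $\Des(\pi)$ — matches precisely the facets removed by the tie-breaking convention. Getting the descent convention to align with the open-ray convention of Lemma~\ref{conelemma}, so that descents (not ascents) carry the numerator monomials and the origin is counted exactly once, is the crux; everything else is a direct application of the geometric-series computation already packaged in that lemma.
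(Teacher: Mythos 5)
Your proposal is correct and follows essentially the same route as the paper: triangulating $[0,1]^n$ via the braid arrangement into half-open simplices $\Delta_\pi$ with strictness at descents, coning over each, and applying Lemma~\ref{conelemma} before summing over $S_n$. The only difference is cosmetic --- you verify the disjointness of the partition directly via the tie-breaking argument and check the apex accounting explicitly, where the paper cites Lemma~4.5.1 of \cite{stanleyec1}.
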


\begin{proof}
Triangulate the $n$-cube $[0,1]^n$ into the disjoint union of simplices
\[
  \Delta_\pi := \left\{ \x \in \R^n :
     \begin{array}{ll}
       0 \le x_{ \pi(n) } \le x_{ \pi(n-1) } \le \dots \le x_{ \pi(1) } \le 1, \\
       x_{ \pi(j+1) } < x_{ \pi(j) } \text{ if } j \in \Des(\pi)
     \end{array}
  \right\} 
\]
(one for each $\pi \in S_n$).
Lemma 4.5.1 of \cite{stanleyec1} implies that the strict inequalities determined by the descent set of $\pi$ make this triangulation \emph{disjoint}.
For example, if $\x=(x_1,\ldots,x_9)=(.2,.1,.2,.3,.1,.1,.3,.3,.2) \in [0,1]^9$, then $\x\in\Delta_\pi$ where $\pi=[4,7,8,1,3,9,2,5,6]$, since $x_6=x_5=x_2< x_9=x_3=x_1<  x_8=x_7=x_4$.
By Lemma \ref{conelemma},
\[
  \sigma_{ \cone(\Delta_\pi) } (z_0, z_1, \dots, z_n) = \frac{ \prod_{ j \in \Des(\pi) } z_0 \, z_{ \pi(1) } z_{ \pi(2) } \cdots z_{ \pi(j) }  }{ \prod_{ j=0 }^n \left( 1 - z_0 \, z_{ \pi(1) } z_{ \pi(2) } \cdots z_{ \pi(j) }  \right) } \, .
\]
On the other hand,
\[
  \sigma_{ \cone([0,1]^n) } (z_0, z_1, \dots, z_n) = \sum_{ k \ge 0 } \prod_{ j=1 }^n \left( 1 + z_j + z_j^2 + \dots + z_j^k \right) z_0^k \, ,
\]
and the disjoint triangulation gives
\[
  \sigma_{ \cone([0,1]^n) } (z_0, z_1, \dots, z_n) = \sum_{ \pi \in S_n } \sigma_{ \cone(\Delta_\pi) } (z_0, z_1, \dots, z_n) \, . \qedhere
\]
\end{proof}

\begin{proof}[Proof of Theorem \ref{macmahoncarlitz}]
Setting $t := z_0$ and $q := z_1 = z_2 = \dots = z_n$ in Theorem \ref{Atheorem} gives
\[
  \sum_{ k \ge 0 } [k+1]_q^n \, t^k
  = \frac{ \sum_{ \pi \in S_n } \prod_{ j \in \Des(\pi) } t q^{ j } }{ \prod_{ j=0 }^n \left( 1 - t q^{ j } \right) }
  = \frac{ \sum_{ \pi \in S_n } t^{ \des (\pi) } q^{ \maj (\pi) } }{ \prod_{ j=0 }^n \left( 1 - t q^j \right) } \, . \qedhere
\]
\end{proof}

\begin{remark}
Our approach is related to the theory of $P$-partitions \cite{stanleythesis,stanleyec1}.
For a given finite poset $P$, one can associate a cone of $P$-partitions.
The standard approach to studying $P$-partitions, going back to Stanley's pioneering work referenced above, is to recognize that each $P$-partition cone is a union of closed chambers of the type-$A$ braid arrangement.
Thus, each $P$-partition cone admits a unimodular triangulation, and these unimodular subcones are indexed by linear extensions of $P$.

Our approach is based almost entirely on the triangulation of $[0,1]^n$ induced by the type-$A$ braid arrangement; the relationship with $P$-partitions is then that $[0,1]^n$ is a truncation of the $P$-partition cone in the case where $P$ is an antichain of size~$n$.
That the linear extensions of such an antichain are easily put into bijection with the elements of $S_n$ gives our connection to symmetric groups and the braid arrangement.
Mirroring these similarities, our Theorem~\ref{Atheorem} resembles \cite[Theorem~7.1]{stanleythesis}.

Where our techniques diverge from being a minor variant of $P$-partition theory is that throughout this work, when we encounter a unimodular triangulation of $\cone([0,1]^n)$, we often choose non-unimodular generators for the unimodular cones in our triangulation.
Also, several of our generating-function identities require studying non-unimodular triangulations of $\cone([0,r]^n)$ for $r\geq 2$.
To our knowledge, this approach has not been used in the study of $P$-partitions.
\end{remark}

\begin{remark}\label{hilbertseries}
There is also a connection between our generating functions and the theory of affine semigroup algebras.
The generating function in Theorem~\ref{Atheorem} is the finely-graded Hilbert series for the affine semigroup algebra formed from the semigroup of integer points in $\cone([0,1]^n)$, as discussed in \cite{HibiBook,MillerSturmfels,StanleyGreenBook}.
Through much of the recent literature on Euler--Mahonian distributions referenced in this paper, Hilbert-series interpretations for these bivariate identities have been sought.
All of our identities provide such interpretations, as they arise from the finely-graded Hilbert series of affine semigroup algebras.

Further, the study of semigroup algebras arising from polyhedral cones has been an area of intense study for combinatorial commutative algebraists over the past several decades.
The most important general result regarding Hilbert series for such cones is Hochster's theorem, which states that normal affine semigroup algebras are Cohen--Macaulay \cite{Hochster}.
The Cohen--Macaulay property forces serious constraints on single-variable specializations of the associated finely-graded Hilbert series for the algebra; these constraints apply to univariate specializations of our identities.
\end{remark}


\section{Wreath products}\label{wreathsection}


In this section, we prove three new multivariate generating function identities connected with pairs of statistics on wreath products of the form $\Z_r\wr~S_n$.
Our proofs of these identities lead to a bijective proof of the joint equidistribution of the ``negative'' and ``flag'' statistics.


\subsection{Identities involving $(k+1)^n$}

We begin by recalling the definition of the negative statistics and flag statistics on $\Z_r \wr S_n$, as introduced in \cite{bagno, bagnobiagioli}.
These are generalizations of the type-$B$ negative and flag statistics introduced by Adin--Brenti--Roichman, which we discuss in Section~\ref{bigbsection}.
Our interest in these statistics comes from the role they play in the following two identities.

\begin{theorem}[Bagno, \cite{bagno}]\label{wreathnegBthm}
\[
  \sum_{ k \ge 0 } [k+1]_q^n \, t^k
  = \frac{ \sum_{(\pi,\s) \in \Z_r\wr S_n } t^{ \ndes(\pi,\s) } q^{ \nmaj(\pi,\s) } }{ (1-t)\prod_{j=1}^n (1-t^rq^{rj}) } \, .
\]
\end{theorem}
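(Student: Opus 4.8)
The plan is to mirror the proof of Theorem~\ref{Atheorem}: I would use the same triangulation of $\cone([0,1]^n)$ into the unimodular cones $\cone(\Delta_\pi)$, $\pi\in S_n$, but re-express each cone's generating function using rescaled ray generators chosen to produce the denominator $(1-t)\prod_{j=1}^n(1-t^rq^{rj})$. As in the type~$A$ case, the left-hand side is the specialization of $\sigma_{\cone([0,1]^n)}(z_0,\ldots,z_n)=\sum_{k\ge0}\prod_{j=1}^n[k+1]_{z_j}\,z_0^k$ under $t:=z_0$ and $q:=z_1=\cdots=z_n$, so everything reduces to recomputing each $\sigma_{\cone(\Delta_\pi)}$ in a new way and summing.

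For fixed $\pi$, the cone $\cone(\Delta_\pi)$ has minimal ray generators $\v_0=\e_0$ and $\v_j=\e_0+\e_{\pi(1)}+\cdots+\e_{\pi(j)}$ for $1\le j\le n$. I would keep $\v_0$ but replace each $\v_j$ by $r\v_j$ for $j\ge1$. Since $\z^{\v_0}=z_0$ specializes to $t$ and $\z^{r\v_j}=(z_0z_{\pi(1)}\cdots z_{\pi(j)})^r$ specializes to $t^rq^{rj}$, Lemma~\ref{genconelemma} produces precisely the denominator $(1-t)\prod_{j=1}^n(1-t^rq^{rj})$. Unimodularity of $\cone(\Delta_\pi)$ makes the determinant of the scaled generators equal to $r^n$, so each fundamental parallelepiped carries exactly $r^n$ lattice points; over all $n!$ permutations this gives $r^n\,n!=|\Z_r\wr S_n|$, signalling that the numerators should be indexed by the wreath product.

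To extract the numerators I would use the ``shifting the entire parallelepiped'' technique of Section~\ref{sec:unim}. The closed parallelepiped contains the $r^n$ lattice points $\sum_{j=1}^n\alpha_j\v_j$ with $\alpha=(\alpha_1,\ldots,\alpha_n)\in\{0,1,\ldots,r-1\}^n$ (the coefficient of the unscaled $\v_0$ is forced to be $0$), and the half-open structure of $\cone(\Delta_\pi)$ removes exactly the facets opposite $\v_j$ for $j\in\Des(\pi)$. The shift is therefore by the minimal generators $\sum_{j\in\Des(\pi)}\v_j$ (as in the worked example of Section~\ref{sec:unim}, one shifts coefficients by $+1$ rather than by the scaling factor), yielding the lattice points $\sum_{j=1}^n\beta_j\v_j$ with $\beta_j=\alpha_j+[\,j\in\Des(\pi)\,]$. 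Such a point has $x_0$-coordinate $\sum_j\beta_j$ and satisfies $\sum_{i\ge1}x_{\pi(i)}=\sum_j j\beta_j$, hence it specializes to
\[
t^{\sum_j\beta_j}\,q^{\sum_j j\beta_j}=t^{\,\des(\pi)+\sum_j\alpha_j}\,q^{\,\maj(\pi)+\sum_j j\alpha_j}\,.
\]
Summing over $\pi\in S_n$ and $\alpha\in\{0,\ldots,r-1\}^n$ gives the full numerator.

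The main obstacle---and the only genuinely new content beyond the type~$A$ argument---is matching these exponents to the negative statistics. One must fix the bijection sending a pair $(\pi,\alpha)$ to the colored permutation $(\pi,\s)\in\Z_r\wr S_n$ whose colors are recorded by $\alpha$, and then verify against the definitions of Bagno that $\ndes(\pi,\s)=\des(\pi)+\sum_j\alpha_j$ and $\nmaj(\pi,\s)=\maj(\pi)+\sum_j j\alpha_j$ (when $r=1$ one has $\alpha=\0$ and both reduce to $\des(\pi)$ and $\maj(\pi)$, recovering Theorem~\ref{macmahoncarlitz}). The delicate step is reconciling the geometric colors $\alpha_j$, which appear as the gaps $x_{\pi(j)}-x_{\pi(j+1)}$ of the sorted coordinates, with the window-notation colors in the definition of $\ndes$ and $\nmaj$, and checking that the total color and its position-weighted sum reproduce exactly the negative descent number and negative major index. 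Once this combinatorial dictionary is confirmed, summing the recomputed cone identities reconstructs $\sigma_{\cone([0,1]^n)}$ on the left and establishes the theorem; as a by-product, the bijection furnishes a lattice-point model for the negative statistics.
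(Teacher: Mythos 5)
Your proposal is correct and takes essentially the same route as the paper: the proof of the multivariate Theorem~\ref{wreathcubeBthm} uses exactly your rescaled generators $\e_0,\,r(\e_0+\e_{\pi(1)}),\,\ldots,\,r(\e_0+\e_{\pi(1)}+\cdots+\e_{\pi(n)})$ and the shift of the whole parallelepiped by $\sum_{j\in\Des(\pi)}(\e_0+\e_{\pi(1)}+\cdots+\e_{\pi(j)})$, then specializes $t:=z_0$, $q:=z_1=\cdots=z_n$. The ``combinatorial dictionary'' you flag as the delicate step is supplied in the paper by the coset decomposition $\Z_r\wr S_n=\bigcup_{\pi\in S_n} I_{r,n}\pi$ together with Proposition~\ref{prop:IncNeg}: taking $\alpha_j$ to be the multiplicity of $j$ in $\NNeg((\rho,\s)^{-1})$ gives a bijection under which $\Des(\pi)=\Des_A((\rho,\s)\pi)$, so your identities $\ndes=\des(\pi)+\sum_j\alpha_j$ and $\nmaj=\maj(\pi)+\sum_j j\alpha_j$ hold immediately from the multiset definition of $\NDes$.
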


\begin{theorem}[Bagno--Biagioli, \cite{bagnobiagioli}]\label{wreathflagBthm}
\[
  \sum_{ k \ge 0 } [k+1]_q^n \, t^k
  = \frac{ \sum_{(\pi,\s) \in \Z_r\wr S_n } t^{ \fdes(\pi,\s) } q^{ \fmajor(\pi,\s) } }{ (1-t)\prod_{j=1}^n (1-t^rq^{rj}) } \, .
\]
\end{theorem}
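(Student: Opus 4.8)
The plan is to run the geometric argument of Theorem~\ref{Atheorem} verbatim on the same cube, changing only the ray generators chosen for each maximal cone so that the denominator matches the one appearing here. As in the proof of Theorem~\ref{macmahoncarlitz}, I set $t := z_0$ and $q := z_1 = \cdots = z_n$; under this specialization the left-hand side is exactly $\sigma_{\cone([0,1]^n)}(z_0,\ldots,z_n)$, so it suffices to compute this full generating function by decomposing $[0,1]^n$ and summing over the pieces. The triangulation is the braid-arrangement triangulation into the simplices $\Delta_\pi$ ($\pi \in S_n$) used in Theorem~\ref{Atheorem}; nothing about the decomposition changes, only how I read off each $\sigma_{\cone(\Delta_\pi)}$.

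For each $\pi$, write the minimal ray generators of the unimodular cone $\cone(\Delta_\pi)$ as $\v_0 = \e_0$ and $\v_j = \e_0 + \e_{\pi(1)} + \cdots + \e_{\pi(j)}$ for $1 \le j \le n$. Instead of these, I use the scaled generators $\v_0, r\v_1, r\v_2, \ldots, r\v_n$, which generate the same cone since scaling a ray generator by a positive integer does not change the ray. By Lemma~\ref{genconelemma} this yields
\[
  \sigma_{\cone(\Delta_\pi)}(z_0,\ldots,z_n) = \frac{\sigma_{\Pi_C}(z_0,\ldots,z_n)}{(1-z_0)\prod_{j=1}^n\bigl(1-(z_0\,z_{\pi(1)}z_{\pi(2)}\cdots z_{\pi(j)})^r\bigr)} ,
\]
whose denominator specializes to $(1-t)\prod_{j=1}^n(1-t^r q^{rj})$, exactly the denominator of the theorem and independent of $\pi$. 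The scaling factors are $c_0 = 1$ and $c_1 = \cdots = c_n = r$, so by the discussion of Section~\ref{sec:unim} the fundamental parallelepiped $\Pi_{\overline{C}}$ contains $\prod_j c_j = r^n$ lattice points, one for each $(a_1,\ldots,a_n) \in \{0,1,\ldots,r-1\}^n$ via $\p = \sum_{j=1}^n a_j \v_j$. These $r^n$ points are in natural bijection with the $r^n$ color vectors $\s$ attached to $\pi$, which is precisely the number of elements of $\Z_r \wr S_n$ with underlying permutation $\pi$.

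It remains to pass from $\Pi_{\overline{C}}$ to the correct half-open parallelepiped $\Pi_C$ and to identify the exponents. The facets removed from $\cone(\Delta_\pi)$ are exactly those forced by the strict inequalities indexed by $\Des(\pi)$, so I apply one of the two shifting procedures of Section~\ref{sec:unim} (shifting points off the boundary, or translating the whole parallelepiped) to obtain $\Z^{n+1}\cap\Pi_C$ from $\Z^{n+1}\cap\Pi_{\overline{C}}$. Reading the $z_0$- and $z_i$-exponents of the resulting monomials and specializing $t = z_0$, $q = z_i$ then gives, for the lattice point indexed by $\s$, a contribution $t^{\alpha(\pi,\s)} q^{\beta(\pi,\s)}$ in which $\alpha$ collects the descent data of $\pi$ together with the color coordinates $a_j$, and $\beta$ collects $\maj(\pi)$ together with the weighted color sum $\sum_j j\,a_j$; summing $\sigma_{\Pi_C}$ over all $\pi$ and equating with $\sigma_{\cone([0,1]^n)}$ yields the claimed identity once $\alpha = \fdes$ and $\beta = \fmajor$.

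The main obstacle is exactly this last matching: verifying that, under the bijection between lattice points of $\Pi_C$ and colored permutations, the exponents produced by the shifted fundamental parallelepiped coincide on the nose with the defining formulas for $\fdes(\pi,\s)$ and $\fmajor(\pi,\s)$. This is a bookkeeping computation rather than a conceptual difficulty, but it is where the choice between the two shifting procedures and the precise convention relating $(a_1,\ldots,a_n)$ to the colors $(c_1,\ldots,c_n)$ must be pinned down. I expect the same framework, with the alternative shifting procedure or an alternative reading of the same $r^n$ points, to produce the numerator $\sum_{(\pi,\s)} t^{\ndes(\pi,\s)} q^{\nmaj(\pi,\s)}$ of Theorem~\ref{wreathnegBthm}; since both numerators then arise from the same $r^n$ lattice points in $\Pi_C$, comparing the two readings gives the promised bijective proof that the flag and negative statistics are equidistributed.
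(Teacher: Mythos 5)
Your geometric framework is exactly the paper's: the same braid-arrangement triangulation of $[0,1]^n$, the same scaled ray generators $\e_0,\, r(\e_0+\e_{\pi(1)}),\, \ldots,\, r(\e_0+\e_{\pi(1)}+\cdots+\e_{\pi(n)})$, Lemma~\ref{genconelemma}, and the shifting machinery of Section~\ref{sec:unim} (the paper's proof of the multivariate Theorem~\ref{wreathflagcubeBthm} uses shifting points off the boundary, then specializes $t:=z_0$, $q:=z_1=\cdots=z_n$). But the step you defer as ``a bookkeeping computation'' is the actual mathematical content of the theorem, and your proposal leaves it genuinely open. Two things must be supplied. First, the convention relating the parallelepiped coefficients $(\alpha_1,\ldots,\alpha_n)$ to the colors is \emph{not} the obvious one: one must set $\alpha_j = a_j^\s = (c_j - c_{j+1}) \bmod r$, the color-change vector of Definition~\ref{wreathdefsignchange}, rather than $\alpha_j = c_j$. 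The naive choice fails outright: for $[1^1\,2^1] \in \Z_2 \wr S_2$ one has $\fdes(\pi,\s) = 2\cdot 0 + c_1 = 1$, whereas the monomial with $\alpha_j = c_j$ has $t$-exponent $\col(\pi,\s) + r\cdot\#\{j\in\Des(\pi) : c_j=0\} = 2$. Second, even with the right convention, matching exponents means proving
\[
\fdes(\pi,\s)=\sum_{\substack{j\in \Des(\pi)\\ a_j^\s=0}}r+\sum_{j=1}^na_j^\s
\qquad\text{and}\qquad
\fmajor(\pi,\s)=\sum_{\substack{j\in \Des(\pi)\\ a_j^\s=0}}rj+\sum_{j=1}^nja_j^\s \, ,
\]
and note that the right-hand sides are not ``$\maj(\pi)$ plus a weighted color sum'' as your sketch suggests: only descents of $\pi$ at positions with zero color change contribute, scaled by~$r$.

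These identities are where the paper does its real work. It classifies descents of $(\pi,\s)$ (Remark~\ref{rem:descent}) into color-change descents, standard descents ($j\in\Des(\pi)$ with $a_j^\s=0$), and the zero descent; shows the color-change descents number $\lfloor \ch(\s)/r \rfloor$, with the zero descent occurring exactly when $r \nmid \ch(\s)$, which gives the $\fdes$ identity; and proves the $\fmajor$ identity by summing the telescoping relation $\sum_{j\ge i} a_j^\s = c_i + r\cdot\#\{j\ge i : j \text{ is a color-change descent}\}$ over $i\in[n]$. None of this is routine, and without it your argument establishes only the denominator and the count of $r^n$ lattice points per cone. Your closing observation --- that reading the same lattice points through a second convention yields the negative statistics and hence a bijective equidistribution proof --- is indeed the paper's Corollary~\ref{cor:wreathbijection}, but it inherits the same gap: the negative-statistics reading uses a genuinely different bijection (multiplicities in $\NNeg((\rho,\s)^{-1})$, with the whole parallelepiped shifted rather than individual points), and both readings must be verified before comparing them proves anything.
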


\begin{remark}
Bagno and Biagioli also prove in \cite{bagnobiagioli} a multivariate theorem of this type for a family of normal subgroups of $\Z_r \wr S_n$.
Their techniques involve studying colored-descent representations of these subgroups, which are representations of the groups on the associated coinvariant algebra.
\end{remark}

Throughout this subsection, we use the total order from Definition~\ref{BnDes} on the elements of $\{\omega^{r-1},\omega^{r-2},\ldots,\omega^{0}\}\times [n]$, i.e., $j^{c_j}<k^{c_k}$ if $c_j>c_k$ or if both $c_j=c_k$ and $j<k$ hold.

\begin{definition}\label{typeAwreathstats}
For an element $(\pi,\s)\in \Z_r \wr S_n$, we define the \emph{negative set} of $(\pi,\s)$ to be
\[
\Neg(\pi,\s):= \{ i\in [n]: \s_i \neq \omega^{0}=1 \} \, ,
\]
and we define $\nega(\pi,\s):=\#\Neg(\pi,\s)$.
Writing $\s_j=\omega^{c_j}$, we define the \emph{color sum statistic} to be
\[
\col(\pi,\s):=\sum_{i\in [n]} c_i \, .
\]
The \emph{type-$A$ descent set} is defined to be
\[
\Des_A(\pi,\s):= \{i\in [n-1] : \pi_i^{c_i} > \pi_{i+1}^{c_{i+1}} \} \, 
\]
and the \emph{type-$A$ descent statistic} is
\[
\des_A(\pi,\s):=\#\Des_A(\pi,\s) \, .
\]
The \emph{type-$A$ major index} is
\[
\major_A(\pi,\s)=\sum_{j\in \Des_A(\pi,\s)}j \, .
\]
\end{definition}

\begin{example} 
Let $(\pi,\s)=[1^3 \, 4^0 \, 2^1 \, 3^0 \, 6^2 \, 5^1 ]\in \Z_4\wr S_6$.
Then
\[
\Neg(\pi,\s)=\{1,3,5,6\} 
\]
and $\col(\pi,\s)=3+1+2+1=7$.
Further,
\[
\Des_A(\pi,\s)=\{2,4\}
\]
and thus $\des_A(\pi,\s)=2$ and $\major_A(\pi,\s)=6$.
\end{example}

We next define \emph{negative} statistics for wreath products, following \cite{bagno}.
Recall first that a \emph{multiset} of elements of $[n]$ is a subset $S\subseteq[n]$ together with a function $\nu:S\rightarrow \Z_{\geq 1}$, where we call $\nu(i)$ is the \emph{multiplicity} of $i$ in $S$.
Instead of specifying $\nu$ for a multiset, we typically write a multiset as a set of elements with repetition, e.g. $M=\{1,1,1,2,4,4,4,4,7,7\}$ represents the multiset with $S=\{1,2,4,7\}$ where $\nu(1)=3$, $\nu(2)=1$, $\nu(4)=4$, and $\nu(7)=2$.
The cardinality of a multiset is the sum of the multiplicities of the elements of the underlying set.
To form a union of multisets, we take the union of the underlying sets and sum the multiplicities of the elements.
When forming a sum (or product) indexed by the elements of a multiset $(S,\nu)$, we include $\nu(i)$ summands (or factors) for each $i\in S$.
For example, with our previous example $M$, we have $\sum_{i\in M}2^i=3\cdot 2^1 + 1\cdot 2^2 + 4\cdot 2^4 + 2\cdot 2^7$.

\begin{definition} 
For an element $(\pi,\s)$ in $\Z_r\wr S_n$, we define the \emph{negative inverse multiset} as
\[
\NNeg(\pi,\s) :=\{\underbrace{i,i,\ldots,i}_{c_i \text{ times}}:i\in[n]\} \, .
\]
We define the \emph{negative descent multiset} as
\[
\NDes(\pi,\s):= \Des_A(\pi,\s)\cup \NNeg((\pi,\s)^{-1}) \, .
\]
The \emph{negative descent statistic} is 
\[
\ndes(\pi,\s):=\#\NDes(\pi,\s) \, .
\]
The \emph{negative major index} is
\[
\nmaj(\pi,\s):=\sum_{i\in \NDes(\pi,\s)}i \, .
\]
\end{definition}

\noindent
Observe that $\NNeg((\pi,\s)^{-1})$ contains exactly $(r-c_{\pi^{-1}(i)})\mod r$ copies of each $i\in [n]$.

\begin{example} 
Let $(\pi,\s)=[1^3 \, 4^0 \, 2^1 \, 3^0 \, 6^2 \, 5^1 ]\in \Z_4\wr S_6$.
Then $(\pi,\s)^{-1}=[1^1 \, 3^3 \, 4^0 \, 2^0 \, 6^3 \, 5^2 ]$, and hence $\NNeg((\pi,\s)^{-1})=\{1,2,2,2,5,5,5,6,6\}$.  
There are $r-c_{\pi^{-1}(5)}=4-c_6=4-1=3$ copies of $5$ contained in this set, and there are
$r-c_{\pi^{-1}(3)}=4-c_4=4-0\equiv 0 \bmod 4$ copies of $3$.
Further,
\[
\NDes(\pi,\s)=\{2,4\}\cup\{1,2,2,2,5,5,5,6,6\}=\{1,2,2,2,2,4,5,5,5,6,6\}
\]
and thus $\ndes(\pi,\s)=11$ and $\nmaj(\pi,\s)=40$.
\end{example}

There are also flag statistics for wreath products, due to Bagno and Biagioli \cite{bagnobiagioli}.

\begin{definition}[Bagno--Biagioli]\label{wreathflagdef}
For an element $(\pi,\s)$ in $\Z_r\wr S_n$, we define the \emph{flag descent statistic} as
\[
\fdes(\pi,\s):= r\cdot \des_A(\pi,\s) + c_1 \, ,
\]
where as usual $\s_1=\omega^{c_1}$.
The \emph{flag major index} is
\[
\fmajor(\pi,\s):= r\cdot \major_A(\pi,\s) + \col(\pi,\s) \, .
\]
\end{definition}

\begin{example} 
Let $(\pi,\s)=[1^3 \, 4^0 \, 2^1 \, 3^0 \, 6^2 \, 5^1 ]\in \Z_4\wr S_6$.
Then $\fdes(\pi,\s)=4\cdot 2 + 3=11$ and $\fmajor(\pi,\s)=4\cdot 6+7=31$.
\end{example}

For the statements of our multivariate generalizations of Theorems~\ref{wreathnegBthm} and~\ref{wreathflagBthm}, we will need two more definitions.

\begin{definition}\label{Irn}
Define the subset of \emph{increasing elements of $\Z_r \wr S_n$}, denoted $I_{r,n}$, to be those elements satisfying $\des_A(\rho,\s)=0$, i.e., $I_{r,n}$ contains all permutations $(\rho,\s)$ such that $\rho(j)^{c_j}<\rho(j+1)^{c_{j+1}}$ for all $j\in [n-1]$. 
\end{definition}
It is straightforward that every element of $\Z_r \wr S_n$ can be represented uniquely as 
\[
(\rho,\s)\circ(\pi,(1,1,\cdots, 1))
\]
 for some $\pi\in S_n$ and $(\rho,\s)\in I_{r,n}$, since applying $(\pi,(1,1,\cdots, 1))$ on the right permutes the entries of the window notation for $(\rho,\s)$, and the window for $(\rho,\s)$ yields the unique increasing list of these entries.
For example, in $B_6=\Z_2\wr~S_6$, 
\[
[4^1 \, 1^1 \, 5 \, 3^1 \, 6 \, 2] = [1^1 \, 3^1 \, 4^1 \, 2 \, 5 \, 6][3\, 1 \, 5 \, 2 \, 6 \, 4] \, .
\]
Thus, 
\[
\Z_r \wr S_n=\bigcup_{\pi\in S_n} I_{r,n}\pi \, ,
\]
where we write $\pi$ for $(\pi,(1,1,\cdots, 1))$ to simplify notation.

\begin{proposition}\label{prop:IncNeg}
For $(\rho,\s)\in I_{r,n}$ and $\pi\in S_n$,
\[
\NNeg([(\rho,\s)\pi]^{-1})=\NNeg((\rho,\s)^{-1}) \, .
\]
Further, each permutation $(\rho,\s)\in I_{r,n}$ is uniquely determined by $\NNeg((\rho,\s)^{-1})$.
\end{proposition}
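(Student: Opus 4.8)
The plan is to reduce both assertions to a single observation: that $\NNeg((\rho,\s)^{-1})$ encodes nothing more than the color carried by each value $i\in[n]$ in the window of $(\rho,\s)$. First I would make precise, using the matrix description of $\Z_r\wr S_n$, the formula for inverses. Writing $\s_j=\omega^{c_j}$, one checks that the monomial matrix of $(\rho,\s)$ has $\omega^{c_j}$ in position $(\rho(j),j)$, so its inverse has $\omega^{-c_j}$ in position $(j,\rho(j))$; rewriting this in window form gives $(\rho,\s)^{-1}=(\rho^{-1},\s')$ with the color of position $i$ equal to $-c_{\rho^{-1}(i)}\equiv (r-c_{\rho^{-1}(i)})\bmod r$. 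By the definition of $\NNeg$, this is exactly the observation recorded just before the proposition: $\NNeg((\rho,\s)^{-1})$ contains precisely $(r-c_{\rho^{-1}(i)})\bmod r$ copies of each $i$. Since $\rho(\rho^{-1}(i))=i$, the exponent $c_{\rho^{-1}(i)}$ is the color attached to the window entry whose value is $i$; thus the multiplicity of $i$ is an invertible function of that color.

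For the first assertion I would compute the product $(\rho,\s)\pi$, where $\pi$ abbreviates $(\pi,(1,\ldots,1))$. The matrix product shows $(\rho,\s)\pi=(\rho\pi,\s'')$, with the color $c_{\pi(j)}$ placed in position $j$; this is the precise form of the already-noted fact that right-multiplication by an uncolored permutation merely permutes the window entries. Tracking which position now carries value $i$, namely $j=\pi^{-1}(\rho^{-1}(i))$, the color there is $c_{\pi(j)}=c_{\rho^{-1}(i)}$, unchanged from $(\rho,\s)$. Applying the multiplicity formula of the previous paragraph to $(\rho\pi,\s'')$ then yields the same count $(r-c_{\rho^{-1}(i)})\bmod r$ of copies of $i$, so the two multisets coincide. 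The conceptual content is that right-multiplication by a pure permutation relocates each window entry but preserves its color, while $\NNeg$ of the inverse sees only the color attached to each value.

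For the second assertion I would invert the multiplicity formula: if $m_i$ denotes the number of copies of $i$ in $\NNeg((\rho,\s)^{-1})$, then the color of value $i$ is recovered as $(r-m_i)\bmod r$ (which correctly returns $0$ when $m_i=0$). Hence $\NNeg((\rho,\s)^{-1})$ determines the set of colored entries $\{\, i^{\text{color}(i)} : i\in[n]\,\}$ occurring in the window of $(\rho,\s)$. Because these $n$ entries are pairwise distinct and, by Definition~\ref{Irn}, an element of $I_{r,n}$ is exactly the increasing arrangement of its window entries under the total order of Definition~\ref{BnDes}, this set admits a unique admissible ordering, and so $(\rho,\s)$ is uniquely reconstructed.

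Every step here is routine once the setup is in place; the only point demanding care is the index- and modular-arithmetic bookkeeping in passing among $(\rho,\s)$, its inverse, and the product $(\rho,\s)\pi$, where it is easy to misplace a $\rho^{-1}$ or to mishandle the reduction $\bmod\, r$ when a color equals $0$. I expect no genuine obstacle beyond verifying these identities carefully.
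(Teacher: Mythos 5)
Your proof is correct and takes essentially the same route as the paper's: both arguments rest on the observation that $\NNeg((\rho,\s)^{-1})$ contains $(r-c_{\rho^{-1}(i)})\bmod r$ copies of each value $i$, that right-multiplication by $\pi$ permutes the window entries $\rho_i^{c_i}$ as units so the color attached to each value is unchanged, and that the increasing condition defining $I_{r,n}$ singles out the unique admissible arrangement of the recovered colored entries. The only difference is that you spell out, via the monomial-matrix description, the inverse and product formulas that the paper records as an immediate observation before the proposition.
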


\begin{proof}
If $(\tau,\s)=[\tau_1^{c_1} \, \cdots \, \tau_n^{c_n} ]$ is any element of $\Z_r\wr S_n$, then 
\[
\NNeg((\tau,\s)^{-1})=\{\underbrace{\tau_i,\ldots,\tau_i}_{\substack{(r-c_i) \bmod r\\ \text{ times }}}:c_i\neq 0\} \, .
\]
Since the window for $(\rho,\s)\pi$ consists of a permutation of the window elements for $(\rho,\s)$, and each $\rho_i^{c_i}$ is permuted as a unit by $\pi$ from the window of $(\rho,\s)$ to the window for $(\rho,\s)\pi$, it follows that the labels $\rho_i^{c_i}$ in the window are identical for both these permutations.
The first claim follows.

To verify the uniqueness statement, it is enough to observe that $\NNeg((\rho,\s)^{-1})$ determines the exponent on each $i\in [n]$ in the window notation for $(\rho,\s)$.
Since being an element in $I_{r,n}$ ensures that the entries of the window for $(\rho,\s)$ are in increasing order, this determines the permutation.
\end{proof}

\begin{definition}\label{wreathdefsignchange}
For an element $\s=(\omega^{c_j})_{j=1}^n\in~\{1,\omega^1,\omega^2,\ldots,\omega^{r-1}\}^n$ with $\s_{n+1}:=1=\omega^0$, for $j\in [n]$ define
\[
a_j^\s:= (c_j-c_{j+1})\mod r \, ,
\]
which we call the \emph{$j$th color change} for $\s$.
Define $\ch(\s):=\sum_ja_j^\s$ to be the \emph{total color change} in $\s$.
\end{definition}

\begin{example} 
Let $(\pi,\s)=[1^3 \, 4^0 \, 2^1 \, 3^0 \, 6^2 \, 5^1 ]\in \Z_4\wr S_6$, so that $\s=(\omega^3, \omega^0, \omega^1,\omega^0,\omega^2,\omega^1)$.
Then
\[
a^\s=(3,3,1,2,1,1)
\]
and
$\ch(\s)=3+3+1+2+1+1=11$.
\end{example}


\subsection{Multivariate identities}

Our multivariate extension of Theorem~\ref{wreathnegBthm} is the following.

\begin{theorem}\label{wreathcubeBthm}
\begin{align*}
  \sum_{ k \ge 0 } \prod_{ j=1 }^n [k+1]_{ z_j } \, z_0^k \ = \ 
\sum_{\pi\in S_n} \sum_{(\rho,\s)\in I_{r,n}}  &  \frac{\displaystyle \prod_{j\in \Des(\pi)} z_0z_{\pi(1)}z_{\pi(2)}\cdots z_{\pi(j)} \prod_{j\in \NNeg((\rho,\s)^{-1})} z_0z_{\pi(1)}z_{\pi(2)}\cdots z_{\pi(j)} }{\displaystyle (1-z_0)\prod_{j=1}^n \left(1-z_0^r z_{\pi(1)}^r\cdots z_{\pi(j)}^r \right) } \, .
\end{align*}
\end{theorem}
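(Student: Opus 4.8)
The plan is to reuse verbatim the triangulation of the cube $[0,1]^n$ into the simplices $\Delta_\pi$ from the proof of Theorem~\ref{Atheorem}, and to re-evaluate each cone generating function $\sigma_{\cone(\Delta_\pi)}$ using scaled (non-minimal) ray generators so that the denominator takes the desired shape. Since the left-hand side is exactly $\sigma_{\cone([0,1]^n)}$, and the disjoint triangulation yields $\sigma_{\cone([0,1]^n)} = \sum_{\pi\in S_n}\sigma_{\cone(\Delta_\pi)}$ as in Theorem~\ref{Atheorem}, the entire content of the statement is an alternate evaluation of each summand. Recall that scaling ray generators does not change the cone, so both the type-$A$ expression and the expression I will produce are two representations of the same $\sigma_{\cone(\Delta_\pi)}$.

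Fix $\pi\in S_n$ and recall that $\cone(\Delta_\pi)$ is the unimodular cone with minimal ray generators $\v_0 = \e_0$ and $\v_j = \e_0 + \e_{\pi(1)} + \cdots + \e_{\pi(j)}$ for $1\le j\le n$, half-open in such a way that the rays $\v_j$ with $j\in\Des(\pi)$ are open and all other generators are closed. Rather than the minimal generators, I would use the scaled generators $\v_0, r\v_1, \ldots, r\v_n$, i.e.\ scaling factors $c_0 = 1$ and $c_j = r$ for $j\ge 1$. By Lemma~\ref{genconelemma} this produces precisely the denominator $(1-z_0)\prod_{j=1}^n(1 - z_0^r z_{\pi(1)}^r\cdots z_{\pi(j)}^r)$, and the fundamental parallelepiped contains $\det(\v_0, r\v_1,\ldots,r\v_n) = r^n$ integer points.

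Next I would evaluate the numerator $\sigma_{\Pi_{C_\pi}}$ in two steps, following Section~\ref{sec:unim}. The closed parallelepiped $\Pi_{\overline{C_\pi}} = \sum_{j=0}^n[0,1)(c_j\v_j)$ has integer points $\sum_{j=1}^n\alpha_j\v_j$ with $\alpha_j\in\{0,1,\ldots,r-1\}$, so that $\sigma_{\Pi_{\overline{C_\pi}}} = \prod_{j=1}^n[r]_{z_0 z_{\pi(1)}\cdots z_{\pi(j)}}$. To pass to the half-open $\Pi_{C_\pi}$, the removed facets are exactly those opposite $\v_j$ for $j\in\Des(\pi)$, so the \emph{shifting the entire parallelepiped} identity gives $\Z^n\cap\Pi_{C_\pi} = (\Z^n\cap\Pi_{\overline{C_\pi}}) + \sum_{j\in\Des(\pi)}\v_j$, which multiplies the generating function by $\prod_{j\in\Des(\pi)} z_0 z_{\pi(1)}\cdots z_{\pi(j)}$.

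The main obstacle, and the only genuinely new combinatorial ingredient, is to identify the closed-parallelepiped factor with the sum over $I_{r,n}$ appearing in the statement:
\[
\prod_{j=1}^n[r]_{z_0 z_{\pi(1)}\cdots z_{\pi(j)}} \ = \ \sum_{(\rho,\s)\in I_{r,n}}\ \prod_{j\in\NNeg((\rho,\s)^{-1})} z_0 z_{\pi(1)}\cdots z_{\pi(j)} \, .
\]
Expanding the left-hand product indexes its monomials by vectors $(\alpha_1,\ldots,\alpha_n)\in\{0,\ldots,r-1\}^n$, the corresponding monomial being $\prod_{v=1}^n(z_0 z_{\pi(1)}\cdots z_{\pi(v)})^{\alpha_v}$. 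I would match these with $I_{r,n}$ using Proposition~\ref{prop:IncNeg}: an element $(\rho,\s)\in I_{r,n}$ is determined by the color $d_v$ assigned to each value $v\in[n]$, and $\NNeg((\rho,\s)^{-1})$ contains $v$ with multiplicity $(r - d_v)\bmod r$. Setting $\alpha_v := (r-d_v)\bmod r$ gives a bijection $\{0,\ldots,r-1\}^n \leftrightarrow I_{r,n}$ under which $\prod_{j\in\NNeg((\rho,\s)^{-1})} z_0 z_{\pi(1)}\cdots z_{\pi(j)}$ equals $\prod_{v=1}^n(z_0 z_{\pi(1)}\cdots z_{\pi(v)})^{\alpha_v}$, establishing the claim. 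The delicate point here is keeping the \emph{position} index $j$ driving the monomial $z_0 z_{\pi(1)}\cdots z_{\pi(j)}$ (governed by $\pi$) separate from the \emph{value} carried by $(\rho,\s)$, so that the multiset $\NNeg((\rho,\s)^{-1})$ is translated into exponents correctly. Combining the three computed factors gives $\sigma_{\cone(\Delta_\pi)}$ in the asserted form, and summing over $\pi\in S_n$ completes the proof.
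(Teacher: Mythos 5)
Your proposal is correct and follows essentially the same route as the paper's proof: the same triangulation into the cones $\cone(\Delta_\pi)$, the same scaled ray generators $\e_0, r\v_1, \ldots, r\v_n$ with the shift by $\sum_{j\in\Des(\pi)}\v_j$ for the removed facets, and the same identification of the $r^n$ fundamental-parallelepiped points with $I_{r,n}$ via the multiplicities in $\NNeg((\rho,\s)^{-1})$ (Proposition~\ref{prop:IncNeg}). The only cosmetic difference is that you organize the parallelepiped contribution as the product $\prod_{j=1}^n [r]_{z_0 z_{\pi(1)}\cdots z_{\pi(j)}}$ and then biject its monomials with $I_{r,n}$, whereas the paper directly labels each lattice point by the coset element $(\rho,\s)\pi$ arising from the decomposition $\Z_r \wr S_n = \bigcup_{\pi\in S_n} I_{r,n}\pi$.
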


\begin{proof}
We begin with the triangulation of $\cone ([0,1]^n)$ into the set of cones 
$
\left\{ \cone(\Delta_\pi) : \, \pi \in S_n \right\}
$
found in the proof of Theorem~\ref{Atheorem}.
While $\cone(\Delta_\pi)$ is unimodular for each $\pi$, for this proof we use the non-unimodular ray generators 
\[
\e_0, r(\e_0+\e_{\pi(1)}), r(\e_0+\e_{\pi(1)}+\e_{\pi(2)}), \ldots, r(\e_0+\e_{\pi(1)}+\cdots + \e_{\pi(n)}) \, ,
\]
together with the technique of shifting the entire fundamental parallelepiped described in Section~\ref{sec:unim}.
There are $r^n$ integer points in the fundamental parallelepiped for $\overline{\cone(\Delta_\pi)}$ using these ray generators.
Thus, every integer point $\p$ in the fundamental parallelepiped for $\cone(\Delta_\pi)$ can be uniquely expressed as
\[
\p=\sum_{j\in \Des(\pi)} (\e_0+\e_{\pi(1)}+\cdots + \e_{\pi(j)}) + \sum_{j=1}^n \alpha_j(\e_0+\e_{\pi(1)}+\cdots + \e_{\pi(j)}) \, 
\]
with $\alpha_j\in \{0,1,\ldots,r-1\}$.

Associate to the point $\p$ the element $(\rho,\s)\pi \in \Z_r \wr S_n$, where $\alpha_j=k$ if and only if $j$ has multiplicity $k$ in $\NNeg((\rho,\s)^{-1})=\NNeg([(\rho,\s)\pi]^{-1})$.
Thus, for example, let $r=4$ and $n=6$, and consider $\pi=[1\,6\, 3\, 5\, 2\, 4]$ and $\alpha_1=1$, $\alpha_2=3$, $\alpha_3=\alpha_4=0$, $\alpha_5=3$, and $\alpha_6=2$.
The element of $\Z_4\wr S_6$ associated to this point is $[1^3 \, 4^0 \, 2^1 \, 3^0 \, 6^2 \, 5^1]$, since it is contained in $I_{4,6}\pi$ and has the $\NNeg$ set of its inverse equal to $\{1,2,2,2,5,5,5,6,6\}$.

This correspondence creates a bijection between the elements of $\Z_r \wr S_n$ and the (appropriately shifted) integer points in the fundamental parallelepipeds for the cones over the $\Delta_\pi$.
Note that this bijection encodes $I_{r,n}$ as the integer points in the fundamental parallelepiped for $\cone(\Delta_{\Id})$, where $\Id$ denotes the identity element in $S_n$.
Thus
\[
  \sigma_{\cone(\Delta_\pi)}(z_0,\ldots,z_n) 
  = \sum_{(\rho,\s)\in I_{r,n}}  \frac{\displaystyle \prod_{j\in \Des(\pi)} z_0z_{\pi(1)}z_{\pi(2)}\cdots z_{\pi(j)} \prod_{j\in \NNeg((\rho,\s)^{-1})} z_0z_{\pi(1)}z_{\pi(2)}\cdots z_{\pi(j)} }{\displaystyle (1-z_0)\prod_{j=1}^n \left(1-z_0^r z_{\pi(1)}^r \cdots z_{\pi(j)}^r \right) }  \, .
\]
This completes our proof, since from our triangulation it follows that
\[
\sigma_{\cone ([0,1]^n)}(z_0,\ldots,z_n) = \sum_{\pi\in S_n} \sigma_{\cone(\Delta_\pi)}(z_0,\ldots,z_n) \, . \qedhere
\]
\end{proof}

\begin{proof}[Proof of Theorem~\ref{wreathnegBthm}]
Setting $t:=z_0$ and $q:=z_1=\cdots =z_n$ in Theorem~\ref{wreathcubeBthm} yields our desired form on the left-hand side of our identity, while the denominator of the right-hand side uniformly becomes 
\[
(1-t) \prod_{ j=1 }^n \left( 1 - t^r q^{jr} \right) .
\]
Each element $\displaystyle (\rho,\s)\pi\in \bigcup_{\pi\in S_n} I_{r,n}\pi$ contributes to the numerator on the right-hand side of our identity a summand of
\[
\prod_{j\in \Des(\pi)}tq^j \prod_{j\in \NNeg([(\rho,\s)\pi ]^{-1})} tq^j .
\]
Because $\Des(\pi)=\Des_A((\rho,\s)\pi)$, it follows that 
\[
\prod_{j\in \Des(\pi)}tq^j \prod_{j\in \NNeg([(\rho,\s)\pi ]^{-1})} tq^j = t^{\ndes((\rho,\s)\pi)}q^{\nmaj((\rho,\s)\pi)} ,
\]
hence our proof is complete.
\end{proof}

The following is our multivariate extension of Theorem~\ref{wreathflagBthm}.

\begin{theorem}\label{wreathflagcubeBthm}
\begin{align*}
  \sum_{ k \ge 0 } \prod_{ j=1 }^n [k+1]_{ z_j } \, z_0^k \ = \ 
  \sum_{ (\pi,\s)\in \Z_r\wr S_n }  & \frac{ \displaystyle  \prod_{\substack{j\in \Des(\pi) \\ a_j^\s=0 }}z_0^rz_{\pi(1)}^rz_{\pi(2)}^r\cdots z_{\pi(j)}^r \prod_{j=1}^nz_0^{a_j^\s}z_{\pi(1)}^{a_j^\s}z_{\pi(2)}^{a_j^\s}\cdots z_{\pi(j)}^{a_j^\s}  }{ \displaystyle (1-z_0)\prod_{ j=1 }^n \left( 1 - z_0^r \, z_{\pi(1) }^{r} z_{\pi(2) }^{r} \cdots z_{\pi(j)}^{r} \right) } \, .  
\end{align*}
\end{theorem}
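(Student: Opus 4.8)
The plan is to reuse the geometric setup from the proof of Theorem~\ref{wreathcubeBthm}, altering only the combinatorial labeling of the lattice points in the fundamental parallelepiped. I would triangulate $\cone([0,1]^n)$ into the cones $\cone(\Delta_\pi)$, $\pi\in S_n$, and for each $\pi$ use the non-unimodular ray generators $\e_0$ and $r\v_j$, where $\v_j:=\e_0+\e_{\pi(1)}+\cdots+\e_{\pi(j)}$ for $1\le j\le n$, together with the shift-the-whole-parallelepiped method of Section~\ref{sec:unim}. This produces the denominator $(1-z_0)\prod_{j=1}^n(1-z_0^r z_{\pi(1)}^r\cdots z_{\pi(j)}^r)$ and exhibits the $r^n$ lattice points of $\Pi_{\cone(\Delta_\pi)}$ as the points $\p=\sum_{j=1}^n\beta_j\v_j$ with $\beta_j\in\{0,1,\ldots,r-1\}$ when $j\notin\Des(\pi)$ and $\beta_j\in\{1,2,\ldots,r\}$ when $j\in\Des(\pi)$, the half-open directions being exactly those indexed by $\Des(\pi)$.

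The heart of the argument is to label these lattice points by colorings rather than by negative-inverse multisets. The key observation is that the color-change map $\s\mapsto a^\s=(a_1^\s,\ldots,a_n^\s)$ of Definition~\ref{wreathdefsignchange} is a bijection from $\{1,\omega,\ldots,\omega^{r-1}\}^n$ onto $\{0,1,\ldots,r-1\}^n$: the boundary convention $c_{n+1}=0$ lets one recover the colors by the telescoping formula $c_j\equiv a_j^\s+a_{j+1}^\s+\cdots+a_n^\s\pmod r$. I would then attach to $(\pi,\s)\in\Z_r\wr S_n$ the point $\p$ whose coefficient of $\v_j$ is $\beta_j:=a_j^\s$ at every non-descent position and at every descent position with $a_j^\s\neq0$, and $\beta_j:=r$ at every descent position with $a_j^\s=0$. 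Since $a_j^\s\mapsto\beta_j$ is the identity off $\Des(\pi)$ and is the bijection $\{0,\ldots,r-1\}\to\{1,\ldots,r\}$ sending $0\mapsto r$ on $\Des(\pi)$, this assignment is, for fixed $\pi$, a bijection from the colorings $\s$ onto the lattice points of $\Pi_{\cone(\Delta_\pi)}$ described above; letting $\pi$ range over $S_n$ then matches all of $\Z_r\wr S_n$.

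It remains to read off the generating function. For the point $\p$ attached to $(\pi,\s)$ one has $\z^\p=\prod_{j=1}^n(\z^{\v_j})^{\beta_j}$, and separating $\beta_j$ into its $a_j^\s$ part and the extra $r$ contributed at descent positions with $a_j^\s=0$ gives, via $\z^{\v_j}=z_0z_{\pi(1)}\cdots z_{\pi(j)}$, exactly the numerator
\[
\prod_{\substack{j\in\Des(\pi)\\ a_j^\s=0}}z_0^r z_{\pi(1)}^r\cdots z_{\pi(j)}^r\ \prod_{j=1}^n z_0^{a_j^\s}z_{\pi(1)}^{a_j^\s}\cdots z_{\pi(j)}^{a_j^\s}.
\]
By Lemma~\ref{genconelemma}, summing these monomials over all $\s$ gives the numerator of $\sigma_{\cone(\Delta_\pi)}$ over the stated denominator, and summing over $\pi$ together with $\sigma_{\cone([0,1]^n)}=\sum_{k\ge0}\prod_{j=1}^n[k+1]_{z_j}\,z_0^k$ from the disjoint triangulation yields the identity.

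I expect the main obstacle to be the bijection step. One must verify both that $\s\mapsto a^\s$ is a bijection, which is where the convention $c_{n+1}=0$ is essential, and, more delicately, that the descent adjustment $0\mapsto r$ is precisely what moves the half-open facets of $\Pi_{\overline{\cone(\Delta_\pi)}}$ to those of $\Pi_{\cone(\Delta_\pi)}$, so that the color-change labeling lands on the correctly shifted lattice points rather than the unshifted ones. Checking that the two products in the numerator separate correctly, with the $r$th powers appearing only at descent positions where the color change vanishes, is the bookkeeping that makes this precise.
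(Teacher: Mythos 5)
Your proposal is correct and matches the paper's proof essentially step for step: same triangulation into the cones over $\Delta_\pi$, same non-unimodular ray generators $\e_0, r\v_1,\ldots,r\v_n$, the same parametrization of the $r^n$ lattice points of the half-open fundamental parallelepiped, and the same labeling $a_j^\s=\beta_j$ with the adjustment $0\mapsto r$ at descent positions. The only quibble is terminological: the correspondence you actually define (shifting by $r\v_j$ exactly at those $j\in\Des(\pi)$ with $a_j^\s=0$) is the paper's ``shifting integer points off the boundary'' technique from Section~\ref{sec:unim}, not the whole-parallelepiped shift you name at the outset --- the whole-parallelepiped shift would send $\alpha_j\mapsto\alpha_j+1$ at every descent position and would not produce your (correct) numerator --- but the map you write down is precisely the one the paper uses, so this does not affect the argument.
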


\begin{proof}
We begin again with the triangulation of $\cone ([0,1]^n)$ by the set of cones 
$
\left\{ \cone(\Delta_\pi): \, \pi \in S_n \right\}
$
found in the proof of Theorem~\ref{Atheorem}.
As in our previous proof, for $\overline{\cone(\Delta_\pi)}$ we use the non-unimodular ray generators 
\[
\e_0, r(\e_0+\e_{\pi(1)}), r(\e_0+\e_{\pi(1)}+\e_{\pi(2)}), \ldots, r(\e_0+\e_{\pi(1)}+\cdots + \e_{\pi(n)}) \, .
\]
However, in this proof we use the technique of shifting integer points off of the boundary, discussed in Section~\ref{sec:unim}.
Hence we represent every integer point $\p$ in the fundamental parallelepiped for $\cone(\Delta_{\pi})$ uniquely using a coefficient vector $\alpha\in \{0,1,2,\ldots,r-1\}^n$ in the sum
\[
\p=\sum_{j=1}^n \alpha_j(\e_0+\e_{\pi(1)}+\cdots + \e_{\pi(j)}) + \sum_{\substack{j\in \Des(\pi)\\ \alpha_j=0}} r(\e_0+\e_{\pi(1)}+\cdots + \e_{\pi(j)}) \, .
\]

We may then associate to the point $\p$ the element $(\pi,\s)\in \Z_r\wr S_n$ where $\pi$ is the same as the index on $\cone(\Delta_\pi)$ and $\s$ is defined by $a_j^\s=\alpha_j$.
This bijectively relates $\Z_r\wr S_n$ to the (possibly shifted) integer points in the fundamental parallelepipeds of the cones over the $\Delta_\pi$'s.
Thus
\[
  \sigma_{\cone(\Delta_\pi)}(z_0,\ldots,z_n) 
  = \sum_{(\pi,\s)\in \Z_r\wr S_n}  \frac{\displaystyle \prod_{\substack{j\in \Des(\pi)\\ a_j^\s=0}} z_0^rz_{\pi(1)}^rz_{\pi(2)}^r\cdots z_{\pi(j)}^r \prod_{j=1}^n z_0^{a_j^\s}z_{\pi(1)}^{a_j^\s}z_{\pi(2)}^{a_j^\s}\cdots z_{\pi(j)}^{a_j^\s} }{\displaystyle (1-z_0)\prod_{j=1}^n \left(1-z_0^rz_{\pi(1)}^r\cdots z_{\pi(j)}^r \right) }  \, .
\]
(Note that in the summand on the right-hand side, $\pi$ is fixed while $\s$ varies.)

This completes our proof, since from our triangulation it follows that
\[
\sigma_{\cone ([0,1]^n)}(z_0,\ldots,z_n) = \sum_{\pi\in S_n} \sigma_{\cone(\Delta_\pi)}(z_0,\ldots,z_n) \, . \qedhere
\]
\end{proof}

\begin{remark}\label{rem:descent}
For the proof of Theorem~\ref{wreathflagBthm}, we need to understand the causes of descents in elements of wreath products.
Let $(\pi,\s)\in \Z_r\wr S_n$.
A descent in position $j$ of $(\pi,\s)$ can arise for one of three reasons:
\begin{itemize}
\item \emph{color change:} $c_j<c_{j+1}$, or
\item \emph{standard descent:} $\s_{j}=\s_{j+1}$ and $j\in \Des(\pi)$, or
\item \emph{zero descent:} $j=1$ and $c_1\neq 0$.
\end{itemize}
For example, in $[1^1\, 2^3 \, 5^0\, 3^0\, 4^1 \, 6^0]$, there are color-change descents in positions $1$ and $4$, a standard descent in position $3$, and a zero descent in position $0$.
Descents in position $0$ are precisely those called zero descents, and hence type-$A$ descents arise only from color change and standard descents.

Regarding color-change descents, consider the partial sums $A_k=\sum_{j=k}^na_j^{\s}$ of color changes.
We have that $a_j^{\s}\leq r-1$ and that for all $j$, we obtain one descent for each $k$ such that $A_k\geq lr$ and $A_{k+1}< lr$ for some fixed multiple of $r$.
In less formal terms, as we read in window notation from right to left, each time the partial sum of color changes accrues an additional $r$, that forces another color-change descent.
If $A_1=\ch(\s)$ is a multiple of $r$, then $c_1=0$, and hence there is no zero descent.
On the other hand, if $A_1=\ch(\s)$ is not a multiple of $r$, then this implies $c_1\neq 0$, which creates a zero descent.
Standard descents arise when $a_j^{\s}=0$, in which case a descent in position $j$ is controlled completely by the descent structure of $\pi$.
\end{remark}

\begin{proof}[Proof of Theorem~\ref{wreathflagBthm}]
Setting $t:=z_0$ and $q:=z_1=\cdots =z_n$ in Theorem~\ref{wreathflagcubeBthm} yields our desired form on the left-hand side of our identity, while the denominator of the right-hand side uniformly becomes 
\[
(1-t) \prod_{ j=1 }^n \left( 1 - t^r q^{rj} \right) .
\]
Each element $(\pi,\s)\in \Z_r \wr S_n$ contributes to the numerator on the right-hand side of our identity a summand of
\[
\prod_{\substack{j\in \Des(\pi)\\ a_j^\s=0}} t^rq^{rj} \prod_{j=1}^n t^{a_j^\s}q^{j a_j^\s} .
\]
Therefore, our proof will be complete once we prove that
\begin{equation}\label{wreathfdessignrep}
\fdes(\pi,\s)=\sum_{\substack{j\in \Des(\pi)\\ a_j^\s=0}}r+\sum_{j=1}^na_j^\s
\end{equation}
and
\begin{equation}\label{wreathfmajorsignrep1}
\fmajor(\pi,\s)=\sum_{\substack{j\in \Des(\pi)\\ a_j^\s=0}}rj+\sum_{j=1}^nja_j^\s \, .
\end{equation}

As an example, consider the element $[1^1\, 2^3 \, 5^0\, 3^0\, 4^1 \, 6^0]\in \Z_4\wr S_6$, for which $\Des_A(\pi,\s)=\{1,3,4\}$, $\Des(\pi)=\{3\}$, and $a^\s=(2,3,0,3,1,0)$.
Then we see that $\fdes(\pi,\s)$ is obtained as
\[
4\cdot \des_A(\pi,\s)+c_1=4\cdot 3 + 1 = 13 = 4 + 9 = \sum_{\substack{j\in \Des(\pi)\\ a_j^\s=0}}4+\sum_{j=1}^6a_j^\s \, ,
\]
while $\fmajor(\pi,\s)$ is obtained as both
\[
4\cdot \major_A(\pi,\s)+\col(\pi,\s)=4\cdot 8 + 5 = 37
\]
and
\[
37= 4\cdot 3 + 2+3\cdot 2 + 3\cdot 4 + 5 = \sum_{\substack{j\in \Des(\pi)\\ a_j^\s=0}}4j+\sum_{j=1}^6a_j^\s j \, .
\]

To prove \eqref{wreathfdessignrep}, we build upon Remark~\ref{rem:descent} to investigate the relationship between the values $a_j^{\s}$ and type-$A$ descents.
We must show that 
\[
r\cdot \des_A(\pi,\s) + c_1 = \sum_{\substack{j\in \Des(\pi)\\ a_j^\s=0}}r+\sum_{j=1}^na_j^\s \, .
\]
Following Remark~\ref{rem:descent}, we observe that $\lfloor \frac{\ch(\s)}{r} \rfloor$ is equal to the number of color-change descents in $(\pi,\s)$, which are all type-$A$ descents.
Thus
\[
  \left(\sum_{j=1}^na_j^\s\right)-c_1=r\cdot\left\lfloor \frac{\ch(\s)}{r} \right\rfloor 
\]
is equal to the number of color-change descents multiplied by $r$.
Similarly, Remark~\ref{rem:descent} implies that the number of standard descents multiplied by $r$ (also type-$A$ descents) is given by $\displaystyle \sum_{\substack{j\in \Des(\pi)\\ a_j^\s=0}}r$.
The equality in \eqref{wreathfdessignrep} follows immediately.
To prove \eqref{wreathfmajorsignrep1}, we must show that
\[
r\cdot \major_A(\pi,\s)+\col(\pi,\s)=\sum_{\substack{j\in \Des(\pi)\\ a_j^\s=0}}rj+\sum_{j=1}^na_j^\s j \, .
\]
It follows from Remark~\ref{rem:descent} that $\displaystyle \sum_{\substack{j\in \Des(\pi)\\ a_j^\s=0}}rj$ is equal to the contribution given by standard descents to the type-$A$ major index.
We are left to consider the contribution of color-change descents, hence what remains to be shown is that
\begin{equation}\label{lasteqn}
\sum_{j=1}^na_j^\s j = \col(\pi,\s)+\sum_{\substack{j\in \Des(\pi,\s) \\ a_j^{\s}\neq 0 }} rj \, ,
\end{equation}
which we prove as follows.
\begin{align*}
\sum_{j=1}^nja_j^{\s} & = \sum_{i=1}^n\sum_{j=i}^na_j^{\s} \\
& = \sum_{i=1}^n\left[c_i + r\cdot \#\left\{j\geq i : j\in \Des(\pi,\s)\text{ arising from a color change} \right\} \right] \\
& = \sum_{i=1}^nc_i + \sum_{i=1}^nr\cdot \#\left\{j\geq i : j\in \Des(\pi,\s)\text{ arising from a color change} \right\} \\
& = \col(\pi,\s) + \sum_{\substack{j\in \Des(\pi,\s)\\ a_j^{\s}\neq 0}}jr \, .
\end{align*}
The key observation in the above sequence of equalities is that
\[
\sum_{j=i}^na_j^{\s}=c_i + r\cdot \#\left\{j\geq i : j\in \Des(\pi,\s)\text{ arises from a color change} \right\} \, ,
\]
which follows from the discussion in Remark~\ref{rem:descent}.
\end{proof}

\begin{example}\label{ex:signchange}
To illustrate the key observation at the end of the proof above, consider an arbitrary $(\pi,\s)\in \Z_5\wr S_9$ with the color vector $c=(4,1,2,3,0,1,1,3,1)$.
Note that there are four type-$A$ descents in $(\pi,\s)$ caused by color changes, with color-change descent positions 7, 5, 3, and 2.
The color-change vector for $c$ is $a=(3,4,4,3,4,0,3,2,1)$, where the right-hand $1$ accounts for $c_9-c_{10}$ where $c_{10}$ is by definition $0$.
When $i=4$, we see that  
\begin{align*}
\sum_{j=4}^na_j^{\s} & = 3+4+0+3+2+1 \\
& = 13 \\
& = 3 + 5\cdot 2 \\
& = c_4 + r\cdot \#\left\{j\geq 4 : j\in \Des(\pi,\s)\text{ arises from a color change} \right\} \, .
\end{align*}
\end{example}

The proofs of Theorems~\ref{wreathcubeBthm} and~\ref{wreathflagcubeBthm} together yield a bijective proof of the equidistribution of the pairs of statistics $(\ndes,\nmaj)$ and $(\fdes,\fmajor)$ for $\Z_r\wr S_n$.
As far as we know, this bijection is new.

\begin{corollary}\label{cor:wreathbijection}
\[
\sum_{(\pi,\s)\in \Z_r \wr S_n }t^{\ndes(\pi,\s)}q^{\nmaj(\pi,\s)}  =\sum_{(\pi,\s)\in \Z_r \wr S_n }t^{\fdes(\pi,\s)}q^{\fmajor(\pi,\s)} .
\]
\end{corollary}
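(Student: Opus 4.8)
The plan is to read the desired equidistribution directly off the two geometric proofs just given, observing that Theorem~\ref{wreathcubeBthm} and Theorem~\ref{wreathflagcubeBthm} arise from one and the same lattice-point computation. Both proofs begin with the triangulation of $\cone([0,1]^n)$ into the cones $\cone(\Delta_\pi)$, and in each we equip $\overline{\cone(\Delta_\pi)}$ with the identical scaled ray generators $\e_0, r(\e_0+\e_{\pi(1)}), \ldots, r(\e_0+\e_{\pi(1)}+\cdots+\e_{\pi(n)})$. Consequently the fundamental parallelepiped $\Pi_{\cone(\Delta_\pi)}$, and hence the finite set $\Z^{n+1}\cap\Pi_{\cone(\Delta_\pi)}$ of lattice points it contains, is exactly the same object in both proofs; the proofs differ only in the bookkeeping used to enumerate these $r^n$ points --- shifting the entire parallelepiped in the first, shifting off the boundary in the second --- and in the rule used to attach a group element to each point.

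First I would make this common point set the hinge of the bijection. For a fixed $\pi\in S_n$ and a fixed $\p\in\Z^{n+1}\cap\Pi_{\cone(\Delta_\pi)}$, the proof of Theorem~\ref{wreathcubeBthm} labels $\p$ by the element $(\rho,\s)\pi\in I_{r,n}\pi$ whose inverse negative multiset has multiplicity $\alpha_j$ at $j$, while the proof of Theorem~\ref{wreathflagcubeBthm} labels the same $\p$ by the element $(\pi,\s')$ with color changes $a_j^{\s'}$ read off from the second expansion of $\p$. As $\pi$ and $\p$ range over all choices, each labeling traverses $\Z_r\wr S_n$ exactly once (using the two coverings $\bigcup_\pi I_{r,n}\pi = \Z_r\wr S_n$ and $\bigcup_\pi\{(\pi,\s')\} = \Z_r\wr S_n$), so sending the first label to the second defines an explicit bijection $\Phi\colon\Z_r\wr S_n\to\Z_r\wr S_n$. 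The key point to verify is that the single monomial $\z^{\p}$ records both statistic pairs at once: under the specialization $z_0=t$, $z_1=\cdots=z_n=q$ used in the proofs of Theorems~\ref{wreathnegBthm} and~\ref{wreathflagBthm}, its exponent vector collapses to $t^{\ndes(g)}q^{\nmaj(g)}$ for the first label $g$ and to $t^{\fdes(h)}q^{\fmajor(h)}$ for the second label $h=\Phi(g)$. This is precisely the content of the two numerator computations already carried out, so no new calculation is needed beyond matching them through $\p$; in particular the common $z_0$-degree of $\p$ forces $\ndes(g)=\fdes(\Phi(g))$ and the common total degree in $z_1,\dots,z_n$ forces $\nmaj(g)=\fmajor(\Phi(g))$.

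Summing the equal contributions of each lattice point over all $\p$ and all $\pi$ then yields the stated identity term by term. Alternatively, and without tracking the bijection, one can simply note that Theorems~\ref{wreathnegBthm} and~\ref{wreathflagBthm} have identical left-hand sides and, after specialization, identical denominators $(1-t)\prod_{j=1}^n(1-t^rq^{rj})$, forcing their numerators --- the two sides of the corollary --- to agree. The main obstacle is not any hard estimate but the care required to confirm that the two enumeration techniques of Section~\ref{sec:unim}, applied to the same cone with the same generators, genuinely produce the same set $\Z^{n+1}\cap\Pi_{\cone(\Delta_\pi)}$; once this is certified the bijection $\Phi$ is forced, and its statistic-transporting property is inherited directly from the two earlier proofs.
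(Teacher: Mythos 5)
Your proposal is correct and is essentially the paper's own proof: the paper likewise fixes the common lattice point $\p$ in the fundamental parallelepiped of $\cone(\Delta_\pi)$, reads off $(\rho,\s)\pi$ from the first parameterization and $(\pi,\s')$ with $a_j^{\s'}=\beta_j$ from the rewritten expansion, and lets the single monomial $\z^{\p}$ carry both statistic pairs, exactly as you describe. Your closing alternative (matching numerators of Theorems~\ref{wreathnegBthm} and~\ref{wreathflagBthm}) is also valid but only gives equidistribution without the explicit bijection, which is the point of the corollary.
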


\begin{proof}
Our proof relies on the indexing of integer points in fundamental parallelepipeds for $\cone(\Delta_\pi)$ found in the proofs of Theorems~\ref{wreathcubeBthm} and~\ref{wreathflagcubeBthm}.
To the element $(\rho,\s)\pi\in \bigcup_{\pi\in S_n}I_{r,n}\pi$ we associated the integer point
\[
\p=\sum_{j\in \Des(\pi)} (\e_0+\e_{\pi(1)}+\cdots + \e_{\pi(j)}) + \sum_{j=1}^n \alpha_j(\e_0+\e_{\pi(1)}+\cdots + \e_{\pi(j)}) \, 
\]
where $\alpha_j=k$ if and only if $j$ has multiplicity $k$ in $\NNeg((\rho,\s)^{-1})=\NNeg([(\rho,\s)\pi]^{-1})$.
Rewriting $\p$ as
\[
\p=\sum_{j=1}^n \beta_j(\e_0+\e_{\pi(1)}+\cdots + \e_{\pi(j)}) + \sum_{\substack{j\in \Des(\pi)\\ \beta_j=0}} r(\e_0+\e_{\pi(1)}+\cdots + \e_{\pi(j)}) \, .
\]
we associated to $\p$ the element $(\pi,\s)\in \Z_r\wr S_n$ where $\pi$ is the same as the index on $\cone(\Delta_\pi)$ and $\s$ is defined by $a_j^\s=\beta_j$.
This yields an explicit bijection from $\Z_r\wr S_n$ to itself that preserves the pairs of statistics $(\ndes,\nmaj)$ and $(\fdes,\fmajor)$.
\end{proof}

\begin{example}
Let $r=4$ and $n=6$, and consider the element $[1^3 \, 4^0 \, 2^1 \, 3^0 \, 6^2 \, 5^1]$ with the $\NNeg$ set of its inverse equal to $\{1,2,2,2,5,5,5,6,6\}$.
Our goal is to find the element in $\Z_4\wr S_6$ paired with this element under our bijection.
We first encode the element as an integer point, which requires using $\pi=[1\,6\, 3\, 5\, 2\, 4]$ and $\alpha_1=1$, $\alpha_2=3$, $\alpha_3=\alpha_4=0$, $\alpha_5=3$, and $\alpha_6=2$; note that $\Des(\pi)=\{2,4\}$.
Thus, writing the two summands arising from the descent positions in $\pi$ first in the first sum, we have that the element is encoded by
\begin{align*}
\p  = & \phantom{=} \e_0+\e_1+\e_6 + \\
& \phantom{=} \e_0+\e_1+\e_6+\e_3+\e_5 +  \\
& \phantom{=} \e_0+\e_1 + \\
& \phantom{=} 3(\e_0+\e_1+\e_6) + \\
& \phantom{=} 3(\e_0+\e_1+\e_6+\e_3+\e_5+\e_2) + \\
& \phantom{=} 2(\e_0+\e_1+\e_6+\e_3+\e_5+\e_2+\e_4) \\
 = & \phantom{=} \e_0+\e_1 + \\
& \phantom{=} \e_0+\e_1+\e_6+\e_3+\e_5 + \\
& \phantom{=} 3(\e_0+\e_1+\e_6+\e_3+\e_5+\e_2) \\
& \phantom{=} 2(\e_0+\e_1+\e_6+\e_3+\e_5+\e_2+\e_4) \\
& \phantom{=} 4(\e_0+\e_1+\e_6) \, .
\end{align*}
Note that in the final sum above, we have merged our summands arising from $\Des(\pi)$ in the first sum into the others to obtain a representation of $\p$ where the last of our summands has a coefficient of $4$.
This $4$ arises in one of the descent positions for $\pi$, corresponding to $\beta_2=0$.
Hence, our second encoding vector is $\beta=(1,0,0,1,3,2)$.
Thus, we recover our new element of $\Z_4\wr S_6$ by setting $a^{\s}=\beta$, obtaining
\[
(\pi,\s)=[1^3\, 6^2\, 3^2\, 5^2\, 2^1\, 4^2] \, .
\]
Finally, observe that for our original element $[1^3 \, 4^0 \, 2^1 \, 3^0 \, 6^2 \, 5^1]$, we have that
\[
(\ndes,\nmaj)= (11,40) \, ,
\]
while for the element $[1^3\, 6^2\, 3^2\, 5^2\, 2^1\, 4^2]$, we have that
\[
(\fdes,\fmajor)= (11,40) \, ,
\]
as desired.
\end{example}


\subsection{Identities involving $(rk+1)^n$}

In \cite[Theorem 9]{chowmansourcarlitz}, Chow and Mansour provide an Euler--Mahonian distribution for wreath products using Steingr{\'{\i}}msson's descent statistics and a new flag major index.
Their identity is a generalization of a result due to Chow--Gessel which we discuss in Section~\ref{bigbsection}.
In this section, we state a similar Euler--Mahonian distribution for the descent statistic and flag major index given in Definitions~\ref{BnDes} and~\ref{wreathflagdef}.
By combining Theorem~\ref{chowmansourflagthm} below and \cite[Theorem 9]{chowmansourcarlitz}, we see that the pairs 
\[ 
(\text{Steingr{\'{\i}}msson's descent statistic},\text{Chow--Mansour's flag major index})
\]
and
\[
(\des,\fmajor)
\]
are equidistributed over $\Z_r\wr S_n$.

\begin{theorem}\label{chowmansourflagthm}
\[
  \sum_{ k \ge 0 } [rk+1]_q^n \, t^k
  \ = \ \frac{\sum_{(\pi,\s)\in \Z_r\wr S_n}t^{\des(\pi,\s)}q^{\fmajor(\pi,\s)} }{ \prod_{ j=0 }^n \left( 1 - t q^{rj} \right) } \, .
\]
\end{theorem}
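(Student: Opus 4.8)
The plan is to mirror the proofs of Theorems~\ref{Atheorem} and~\ref{wreathflagcubeBthm}, but to triangulate the \emph{larger} cone $\cone([0,r]^n)$ rather than $\cone([0,1]^n)$. The left-hand side is exactly $\sigma_{\cone([0,r]^n)}(z_0,\dots,z_n)$ after the specialization $z_0=t$, $z_1=\cdots=z_n=q$: at height $x_0=k$ the cone meets the dilate $[0,rk]^n$, contributing $\prod_{j}[rk+1]_{z_j}\,z_0^k$. I would cut $[0,r]^n$ by the braid arrangement into the disjoint simplices $r\Delta_\pi$, one per $\pi\in S_n$ (disjointness by the same Lemma~4.5.1 of \cite{stanleyec1} used before), so that $\sigma_{\cone([0,r]^n)}=\sum_\pi\sigma_{\cone(r\Delta_\pi)}$. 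Each $\cone(r\Delta_\pi)$ is a \emph{non}-unimodular simplicial cone with ray generators $\v_0=\e_0$ and $\v_j=\e_0+r(\e_{\pi(1)}+\cdots+\e_{\pi(j)})$ for $1\le j\le n$; by Lemma~\ref{genconelemma} its generating function is $\sigma_{\Pi_{C_\pi}}/\prod_{j=0}^n(1-\z^{\v_j})$. Since $\z^{\v_j}$ specializes to $tq^{rj}$, every denominator collapses to the single product $\prod_{j=0}^n(1-tq^{rj})$, so after specialization the theorem reduces to proving $\sum_\pi\sigma_{\Pi_{C_\pi}}\big|_{z_0=t,\,z_i=q}=\sum_{(\pi,\s)}t^{\des(\pi,\s)}q^{\fmajor(\pi,\s)}$.

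The heart of the argument is an explicit description of the $r^n=\det(\v_0,\dots,\v_n)$ lattice points in the fundamental parallelepiped of $\cone(r\Delta_\pi)$. Writing a point as $\sum_j\beta_j\v_j$ with $\beta_j\in[0,1)$, integrality forces $r\beta_j\in\Z$ for $j\ge 1$, so the points are parametrized by $\alpha=(\alpha_1,\dots,\alpha_n)\in\{0,1,\dots,r-1\}^n$ via $\beta_j=\alpha_j/r$, with $\beta_0$ then determined. I would identify $\alpha$ with the coloring $\s$ of $\pi$ for which $a_j^\s=\alpha_j$ (a bijection onto colorings, exactly as in the proof of Theorem~\ref{wreathflagcubeBthm}). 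A short coordinate computation then shows that the corresponding lattice point $\p$ has $\e_{\pi(i)}$-coordinate $\sum_{j\ge i}\alpha_j$ and $\e_0$-coordinate $\lceil\ch(\s)/r\rceil$; hence after specialization its monomial is $t^{\lceil\ch(\s)/r\rceil}\,q^{\sum_j j\,a_j^\s}$.

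It remains to install the half-open structure and match statistics. The simplex $r\Delta_\pi$ has the facet opposite $\v_j$ removed precisely for $j\in\Des(\pi)$, so I would pass from the closed parallelepiped to $\Pi_{C_\pi}$ by shifting each boundary point having $\alpha_j=0$ off that facet by $+\v_j$, which multiplies its monomial by $\z^{\v_j}\to tq^{rj}$. The net effect sends the monomial to
\[
t^{\,\lceil\ch(\s)/r\rceil\,+\,\#\{\,j\in\Des(\pi):a_j^\s=0\,\}}\;q^{\,\sum_j j\,a_j^\s\,+\,\sum_{j\in\Des(\pi),\,a_j^\s=0}rj}.
\]
I would then check the two exponents separately. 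For the $q$-exponent, the identity $\sum_j j\,a_j^\s+\sum_{j\in\Des(\pi),a_j^\s=0}rj=\fmajor(\pi,\s)$ is exactly \eqref{wreathfmajorsignrep1} from the proof of Theorem~\ref{wreathflagBthm}. For the $t$-exponent, Remark~\ref{rem:descent} identifies $\lfloor\ch(\s)/r\rfloor$ with the number of color-change descents and $\#\{j\in\Des(\pi):a_j^\s=0\}$ with the number of standard descents, while $\ch(\s)\equiv c_1\pmod r$ gives $\lceil\ch(\s)/r\rceil=\lfloor\ch(\s)/r\rfloor+[\,c_1\ne 0\,]$, the extra unit being exactly the zero descent; summing the three contributions yields $\des(\pi,\s)$.

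The main obstacle I anticipate is the parallelepiped computation for the non-unimodular cone: unlike the earlier proofs, the generators $\v_j=\e_0+r(\cdots)$ are \emph{not} scalar multiples of the unimodular ray generators, so the shifting recipe of Section~\ref{sec:unim} does not apply verbatim and the lattice points must be found directly. The delicate point there is the $\e_0$-coordinate: obtaining the \emph{ceiling} $\lceil\ch(\s)/r\rceil$ (rather than a floor) is precisely what simultaneously encodes the color-change descents and the zero descent, and it is the feature that distinguishes this identity, with denominator $\prod_{j=0}^n(1-tq^{rj})$ and the plain descent statistic $\des$, from the flag identity of Theorem~\ref{wreathflagBthm}.
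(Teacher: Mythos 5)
Your proposal matches the paper's proof essentially step for step: the paper proves the multivariate Theorem~\ref{wreathnewBthm} by triangulating $[0,r]^n$ into the disjoint simplices $\Delta_\pi$, parametrizing the $r^n$ lattice points of each non-unimodular fundamental parallelepiped by coefficients $a_j^\s/r$ on the generators $\e_0+r\e_{\pi(1)}+\cdots+r\e_{\pi(j)}$ (so that the $\e_0$-coordinate is exactly $\lceil\ch(\s)/r\rceil$), shifts the points with $\alpha_j=0$, $j\in\Des(\pi)$, off the boundary by the opposite ray generator, and then the specialization $t:=z_0$, $q:=z_1=\cdots=z_n$ verifies precisely the two exponent identities you check --- \eqref{wreathdessignrep} via Remark~\ref{rem:descent} and \eqref{wreathfmajorsignrep}, which the paper notes is identical to \eqref{wreathfmajorsignrep1}. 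The only cosmetic difference is that you specialize to $(t,q)$ before cataloguing the parallelepiped points, whereas the paper records the full multivariate identity first and specializes afterward.
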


We obtain in Theorem~\ref{wreathnewBthm} below a multivariate generalization of this bivariate identity.

\begin{remark}At the end of \cite{chowmansourcarlitz}, Chow and Mansour indicate that having a Hilbert-series interpretation of \cite[Theorem 9]{chowmansourcarlitz} is an open problem.
The proof of Theorem~\ref{wreathnewBthm} provides such an interpretation, after taking into account Remark~\ref{hilbertseries}.
\end{remark}

\begin{remark}
In \cite[Equation (8.1)]{biagiolizengwreath}, Biagioli--Zeng obtain a wreath product version of Theorem~\ref{chowgesselthm},
a result due to Chow--Gessel.
The authors do not at this time know of a way to obtain this identity using polyhedral geometry.
\end{remark}

\subsection{Multivariate identities}

Our multivariate generalization of Theorem~\ref{chowmansourflagthm} is the following.

\begin{theorem}\label{wreathnewBthm}
\begin{align*}
  \sum_{ k \ge 0 } \prod_{ j=1 }^n [rk+1]_{ z_j } \, z_0^k \ = \ 
  \sum_{ (\pi,\s)\in \Z_r\wr S_n }  \frac{ \displaystyle z_0^{\left\lceil  \ch(\s)/r \right\rceil } \prod_{j=1}^nz_{\pi(1)}^{a_j^\s}z_{\pi(2)}^{a_j^\s}\cdots z_{\pi(j)}^{a_j^\s}  \prod_{\substack{j\in \Des(\pi) \\ a_j^\s=0 }}z_0z_{\pi(1)}^rz_{\pi(2)}^r\cdots z_{\pi(j)}^r }{ \displaystyle \prod_{ j=0 }^n \left( 1 - z_0 \, z_{\pi(1) }^{r} z_{\pi(2) }^{r} \cdots z_{\pi(j)}^{r} \right) } \, .  
\end{align*}
\end{theorem}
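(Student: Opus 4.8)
The plan is to follow exactly the geometric template established in the proofs of Theorems~\ref{Atheorem}, \ref{wreathcubeBthm}, and \ref{wreathflagcubeBthm}, but now triangulating $\cone([0,r]^n)$ rather than $\cone([0,1]^n)$, in order to produce the $[rk+1]_{z_j}$ factors on the left-hand side. First I would write
\[
\sigma_{\cone([0,r]^n)}(z_0,\ldots,z_n) = \sum_{k\ge 0}\prod_{j=1}^n\left(1+z_j+z_j^2+\cdots+z_j^{rk}\right)z_0^k = \sum_{k\ge 0}\prod_{j=1}^n [rk+1]_{z_j}\, z_0^k \, ,
\]
which matches the desired left-hand side. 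The geometric content is then to decompose $\cone([0,r]^n)$ as a disjoint union of cones over simplices and to read off the generating function of each piece via Lemma~\ref{genconelemma}.

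The decomposition I would use is the dilation by $r$ of the braid-arrangement triangulation: partition $[0,r]^n$ into the scaled simplices $r\Delta_\pi$ indexed by $\pi\in S_n$, exactly as in Theorem~\ref{Atheorem}. For each $\pi$, I would take ray generators for $\overline{\cone(r\Delta_\pi)}$ of the form $\e_0$ together with $r(\e_0+\e_{\pi(1)}+\cdots+\e_{\pi(j)})$ for $j\in[n]$; these are the same non-unimodular generators used in the previous two proofs, so the denominator $\prod_{j=0}^n(1-z_0\,z_{\pi(1)}^r\cdots z_{\pi(j)}^r)$ arises immediately once the $z_0$-exponents are accounted for. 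The key indexing step is to encode each integer point of the fundamental parallelepiped by a coefficient vector $\alpha\in\{0,1,\ldots,r-1\}^n$ recording color changes via $a_j^\s=\alpha_j$, mirroring the encoding in Theorem~\ref{wreathflagcubeBthm}; the product $\prod_{j=1}^n z_{\pi(1)}^{a_j^\s}\cdots z_{\pi(j)}^{a_j^\s}$ and the shifted standard-descent contributions $\prod_{j\in\Des(\pi),\,a_j^\s=0} z_0 z_{\pi(1)}^r\cdots z_{\pi(j)}^r$ should then fall out exactly as before. Summing $\sigma_{\cone(r\Delta_\pi)}$ over $\pi\in S_n$ and using disjointness of the triangulation completes the identity.

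The main obstacle, and the step where this proof genuinely differs from Theorem~\ref{wreathflagcubeBthm}, is accounting for the $z_0$-exponents correctly. In the earlier $(k+1)^n$ identities the apex ray was $\e_0$ alone, contributing a clean $(1-z_0)$ factor; here, because we have dilated to $[0,r]^n$, the relationship between the $\alpha_j$ and the $x_0$-coordinate changes, and I expect the numerator factor $z_0^{\lceil \ch(\s)/r\rceil}$ to emerge precisely from tracking how many times the accumulated color change $\ch(\s)=\sum_j a_j^\s$ crosses a multiple of $r$ as one reads the window from right to left. I would prove that the $z_0$-degree of the monomial attached to $(\pi,\s)$ equals $\lceil \ch(\s)/r\rceil$ by invoking the partial-sum analysis of Remark~\ref{rem:descent}, which already shows that $\lfloor \ch(\s)/r\rfloor$ counts color-change descents and that $c_1\ne 0$ exactly when $\ch(\s)$ is not divisible by $r$; reconciling the floor appearing there with the ceiling appearing in the statement will require carefully distinguishing the divisible and non-divisible cases. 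Once this bookkeeping is verified, the bivariate Theorem~\ref{chowmansourflagthm} follows by the now-standard specialization $t:=z_0$, $q:=z_1=\cdots=z_n$, reducing the numerator to $t^{\des(\pi,\s)}q^{\fmajor(\pi,\s)}$ via the descent analysis of Remark~\ref{rem:descent} and the flag-major-index computation already carried out in the proof of Theorem~\ref{wreathflagBthm}.
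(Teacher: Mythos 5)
Your overall plan --- triangulate $\cone([0,r]^n)$ by the braid arrangement, apply Lemma~\ref{genconelemma} to each piece, index the fundamental-parallelepiped points by color-change vectors, and shift boundary points for descents --- is exactly the paper's strategy, but there is a genuine error at the central step: the vectors $\e_0$ and $r(\e_0+\e_{\pi(1)}+\cdots+\e_{\pi(j)})$ are \emph{not} ray generators of $\overline{\cone(r\Delta_\pi)}$. They generate the strictly smaller cone $\overline{\cone(\Delta_\pi)}$ over the unit-cube simplex: the ray of $\cone(r\Delta_\pi)$ through the vertex $r(\e_{\pi(1)}+\cdots+\e_{\pi(j)})$ has minimal generator $\e_0+r\e_{\pi(1)}+\cdots+r\e_{\pi(j)}$, in which only the spatial coordinates are dilated while the $\e_0$-coordinate stays $1$. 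With your generators, Lemma~\ref{genconelemma} computes the transform of the wrong cone, and even setting that aside, they would produce the denominator $(1-z_0)\prod_{j=1}^n\bigl(1-z_0^r z_{\pi(1)}^r\cdots z_{\pi(j)}^r\bigr)$ --- the denominator of Theorems~\ref{wreathcubeBthm} and~\ref{wreathflagcubeBthm} --- rather than the denominator $\prod_{j=0}^n\bigl(1-z_0\,z_{\pi(1)}^r\cdots z_{\pi(j)}^r\bigr)$ in the statement; the phrase ``once the $z_0$-exponents are accounted for'' cannot bridge this, since no numerator bookkeeping changes a denominator. Indeed, summing your pieces over $\pi\in S_n$ would reproduce $\sigma_{\cone([0,1]^n)}$, i.e.\ the $(k+1)^n$ identity, not the $(rk+1)^n$ identity you correctly computed on the left-hand side.

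The repair is the paper's actual argument, and it also dissolves what you call the main obstacle. With the minimal generators $\e_0+r\e_{\pi(1)}+\cdots+r\e_{\pi(j)}$, $j=0,\ldots,n$, the cone is genuinely non-unimodular with determinant $r^n$, and the fundamental-parallelepiped points are the \emph{fractional} combinations $\p=\alpha_0\e_0+\sum_{j=1}^n\alpha_j\bigl(\e_0+r\e_{\pi(1)}+\cdots+r\e_{\pi(j)}\bigr)$ with $\alpha_j\in\bigl\{0,\tfrac1r,\ldots,\tfrac{r-1}{r}\bigr\}$ and $\alpha_0$ the unique such value making the $\e_0$-coefficient $\sum_{j=0}^n\alpha_j$ an integer --- not integer combinations $\alpha\in\{0,1,\ldots,r-1\}^n$ of scaled generators as in Theorem~\ref{wreathflagcubeBthm}. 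Setting $\alpha_j=a_j^\s/r$, integrality forces the $z_0$-exponent to be exactly $\lceil \ch(\s)/r\rceil$: the ceiling falls out of the geometry directly, with no floor-versus-ceiling case analysis and no appeal to Remark~\ref{rem:descent}, which is needed only afterwards, in the specialization proving Theorem~\ref{chowmansourflagthm}, where one shows $\des(\pi,\s)=\lceil \ch(\s)/r\rceil+\#\{j\in\Des(\pi):a_j^\s=0\}$. Your shifting of the points with $\alpha_j=0$, $j\in\Des(\pi)$, by the minimal generator is correct and matches the paper, but note that it is consistent only with the corrected generators: your scaled generators would force shifts contributing $z_0^rz_{\pi(1)}^r\cdots z_{\pi(j)}^r$ rather than the factor $z_0z_{\pi(1)}^r\cdots z_{\pi(j)}^r$ appearing in the statement, a further symptom of the same conflation.
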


\begin{proof}
As in our previous proofs, this proof proceeds in two stages.
We first triangulate the cube $[0,r]^n$ into a disjoint union of simplices, then set up an indexing system for the integer points in the fundamental parallelepipeds for the cones over these simplices.
Second, we bijectively associate the elements of $\Z_r \wr S_n$ with these integer points in a way that allows us to recover, in our subsequent proof of Theorem~\ref{chowmansourflagthm}, the descent and flag major index statistics from these integer points.

We begin by triangulating $[0,r]^n$ into the disjoint simplices
\[
  \Delta_\pi := \left\{ \x \in \R^n :
     \begin{array}{ll}
       0 \le x_{ \pi(n) } \le x_{ \pi(n-1) } \le \dots \le x_{ \pi(1) } \le r, \\
       x_{ \pi(j+1) } < x_{ \pi(j) } \text{ if } j \in \Des(\pi)
     \end{array}
  \right\} 
\]
(one for each $\pi \in S_n$).
As before, the strict inequalities determined by the descent set of $\pi$ ensures that this triangulation is disjoint.

Unlike the cones produced by coning over the simplices in our triangulation of $[0,1]^n$, the cones arising from this triangulation of $[0,r]^n$ are not unimodular.
By Lemma \ref{genconelemma}, the integer-point transform of $\cone(\Delta_{\pi})$ can be expressed as a rational function where the denominator has the form 
\[
\prod_{ j=0 }^n \left( 1 - z_0 \, z_{\pi(1) }^{r} z_{\pi(2) }^{r} \cdots z_{\pi(j)}^{r} \right) \, ,
\]
i.e., where the displayed exponent vectors are the ray generators for this cone.
As the determinant of the matrix formed by the ray generators of $\overline{\cone(\Delta_\pi)}$ is $r^n$, there are $r^n$ integer points in the fundamental parallelepiped of $\overline{\cone(\Delta_\pi)}$.
It is a straightforward observation that there are $r^n$ such integer points formed by taking linear combinations of the ray generators for the cone with coefficients from the set $\left\{0, \frac 1 r,\ldots, \frac{ r-1 }{ r } \right\}$.
We will use the following notation to denote such an integer point; for $\alpha_j \in \left\{0, \frac 1 r,\ldots, \frac{ r-1 }{ r } \right\}$ where $j=0,\ldots, n$,
\[
\p=\alpha_0\e_0 + \sum_{j=1}^n \alpha_j\left( \e_0 + r\e_{\pi(1)} + r\e_{\pi(2)} + \cdots + r\e_{\pi(j)} \right) \, .
\]
Observe that because $\p$ is an integer point, the value of $\alpha_0$ is determined by the condition that the coefficient of $\e_0$, $\sum_{j=0}^n\alpha_j$, be an integer.

To determine the numerator of $\sigma_{\cone(\Delta_\pi)}(z_0,\ldots,z_n)$, as in our earlier situations dealing with unimodular cones, we must shift some integer points off of the boundary of $\Pi_{\overline{\cone(\Delta_\pi)}}$, specifically those that are not contained in $\Pi_{\cone(\Delta_\pi)}$.
When $\p$ is contained in a given facet indexed by $j\in \Des(\pi)$, then we must shift $\p$ by the minimal ray generator opposite that facet, namely $(\e_0+r\e_{\pi(1)}+r\e_{\pi(2)}+\cdots +r\e_{\pi(j)})$.
Such a point $\p$, when written in the form displayed above, is contained in such a facet precisely when $\alpha_j=0$.
Thus, each such $\p$ must be shifted from $\Pi_{\overline{\cone(\Delta_\pi)}}$ to $\Pi_{\cone(\Delta_\pi)}$ by the vector 
\[
\sum_{\substack{j\in \Des(\pi) \\ \alpha_j=0}}(\e_0+r\e_{\pi(1)}+r\e_{\pi(2)}+\cdots +r\e_{\pi(j)}) \, ,
\]
yielding the point 
\[
\alpha_0\e_0 + \sum_{j=1}^n \alpha_j\left( \e_0 + r\e_{\pi(1)} + r\e_{\pi(2)} + \cdots + r\e_{\pi(j)} \right) + \sum_{\substack{j\in \Des(\pi) \\ \alpha_j=0}}(\e_0+r\e_{\pi(1)}+r\e_{\pi(2)}+\cdots +r\e_{\pi(j)})
\]
in $\Pi_{\overline{\cone(\Delta_\pi)}} \, .$
Hence
\begin{align*}
 \sigma_{ \cone(\Delta_{ \pi }) } (z_0, z_{ 1 }, & \dots, z_{ n }) \ = \
 \sum_{\alpha \in \{0,\frac{1}{r},\ldots,\frac{r-1}{r}\}^n} \frac{ \displaystyle z_0^{ \sum_{j=0}^n \alpha_j } \prod_{j=1}^nz_{\pi(1)}^{r\alpha_j}z_{\pi(2)}^{r\alpha_j}\cdots z_{\pi(j)}^{r\alpha_j}  \prod_{\substack{j\in \Des(\pi) \\ \alpha_j=0 } }z_0z_{\pi(1)}^rz_{\pi(2)}^r\cdots z_{\pi(j)}^r  }{\displaystyle \prod_{ j=0 }^n \left( 1 - z_0 \, z_{\pi(1) }^{r} z_{\pi(2) }^{r} \cdots z_{\pi(j)}^{r} \right) } \, .
\end{align*}

We now associate to the element $(\pi,\s)\in \Z_r\wr S_n$ the integer point in $\Pi_{\cone(\Delta_\pi)}$ with $\alpha_j:=\frac{a_j^\s}{r}$ for $j=1,\ldots,n$, i.e. the point
\[
\alpha_0 \e_0 + \sum_{j=1}^n \frac{a_j^\s}{r}(\e_0 + r\e_{\pi(1)} + r \e_{\pi(2)}+ \cdots + r \e_{\pi(j)}) \, + \sum_{\substack{j\in \Des(\pi) \\ a_j^\s=0}} (\e_0 + r\e_{\pi(1)} + r \e_{\pi(2)}+ \cdots + r \e_{\pi(j)}) \, ,
\]
where $\alpha_0$ is determined by the condition that $\alpha_0+\sum_{j=1}^n \frac{a_j^\s}{r}$ be an integer.
Through this association, the set of ``color" vectors $\{1,\omega,\omega^2,\ldots,\omega^{r-1}\}^n$ parametrizes the integer points in the fundamental parallelepiped for $\cone(\Delta_\pi)$.
This parametrization is bijective, and the coefficient of $\e_0$ in the first two terms of the sum above is equal to both $\lceil \frac{\ch(\s)}{r} \rceil $ and $\sum_{j=0}^n \alpha_j$.
Thus, our proof is complete following the observation that
\[ 
\sigma_{\cone([0,r]^n)}(z_0,\ldots,z_n) = \sum_{\pi\in S_n} \sigma_{\cone(\Delta_\pi)}(z_0,\ldots,z_n) \, . \qedhere
\]
\end{proof}

\begin{proof}[Proof of Theorem \ref{chowmansourflagthm}]
Setting $t:=z_0$ and $q:=z_1=\cdots =z_n$ in Theorem~\ref{wreathnewBthm} yields our desired form on the left-hand side, while the denominator of the right-hand side uniformly becomes $ \prod_{ j=0 }^n \left( 1 - t q^{rj} \right)$.
Each element $(\pi,\s)\in \Z_r\wr S_n$ contributes a summand of
\[
t^{\left\lceil  \ch(\s)/r \right\rceil } \prod_{j:a_j^\s\neq 0}q^{a_j^{\s}j}  \prod_{\substack{j\in \Des(\pi) \\ a_j^\s=0 }}tq^{rj}
\]
to the numerator of the right-hand side.
Hence, we need to prove 
\begin{equation}\label{wreathdessignrep}
\des(\pi,\s)=\left\lceil  \tfrac{ \ch(\s) }{ r } \right\rceil + \#\{ j\in \Des(\pi): a_j^\s=0\} 
\end{equation}
and
\begin{equation}\label{wreathfmajorsignrep}
\fmajor(\pi,\s)=\sum_{j=1}^na_j^\s j + \sum_{\substack{j\in \Des(\pi)\\ a_j^\s=0}}rj \, .
\end{equation}
Observe that \eqref{wreathfmajorsignrep} is identical to \eqref{wreathfmajorsignrep1}, which was proved earlier.

As discussed in Remark~\ref{rem:descent}, a descent in position $j$ of $(\pi,\s)$ can be a color-change descent, a standard descent, or a zero descent.
If $\tfrac{ \ch(\s) }{ r }$ is an integer, then there is no zero descent, and there are $\tfrac{ \ch(\s) }{ r }$ color-change descents.
If $\tfrac{ \ch(\s) }{ r }$ is not an integer, there are $\lfloor \tfrac{ \ch(\s) }{ r } \rfloor$ color-change descents and a zero descent, which contribute a total of $\lceil \tfrac{ \ch(\s) }{ r } \rceil$ descents.
Further, the standard descents are counted by $\#\{ j\in \Des(\pi): a_j^\s=0\}$.
The equality in \eqref{wreathdessignrep} follows immediately from these observations.
\end{proof}



\section{Type $B$}\label{bigbsection}


In this section we state our main results from Section~\ref{wreathsection} in the special case of hyperoctahedral groups.
We also prove a new multivariate identity given in Theorem~\ref{Btheorem}.


\subsection{Identities involving $(k+1)^n$}

In \cite{adinbrentiroichman}, Adin, Brenti, and Roichman introduced several pairs of statistics on $B_n$; these were the inspiration for the statistics considered in Section~\ref{wreathsection}.
For the interested reader, we state the original identities of Adin--Brenti--Roichman and our multivariate identities generalizing them.
The statistics arising here are special cases of those defined in Section~\ref{wreathsection}.

\begin{theorem}[Adin--Brenti--Roichman]\label{negBthm}
\[
  \sum_{ k \ge 0 } [k+1]_q^n \, t^k
  \ = \ \frac{ \sum_{(\pi,\s) \in B_n } t^{ \ndes(\pi,\s) } q^{ \nmaj(\pi,\s) } }{ (1-t)\prod_{j=1}^n (1-t^2q^{2j}) } \, .
\]
\end{theorem}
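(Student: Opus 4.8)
The plan is to obtain this identity as the special case $r=2$ of Theorem~\ref{wreathnegBthm}, using the identification $B_n = \Z_2 \wr S_n$ recorded in Section~\ref{singlevariablebackground}. First I would confirm that the statistics appearing in the statement are exactly the wreath-product statistics of Section~\ref{wreathsection} evaluated at $r=2$: for a signed permutation $(\pi,\s) \in B_n$, the color vector takes values $c_j \in \{0,1\}$, so that $\NNeg((\pi,\s)^{-1})$ is an honest set (each element of $[n]$ occurring with multiplicity $0$ or $1$), and the definitions of $\ndes$ and $\nmaj$ given for $\Z_r \wr S_n$ reduce to the type-$B$ negative descent statistic and negative major index of Adin--Brenti--Roichman. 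This is the only genuine verification required, and I expect it to be routine, since the wreath statistics were deliberately defined to generalize the type-$B$ ones.

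With the statistics matched, I would simply set $r=2$ in Theorem~\ref{wreathnegBthm}. The left-hand side $\sum_{k\ge 0}[k+1]_q^n\,t^k$ is independent of $r$ and is unchanged. The denominator $(1-t)\prod_{j=1}^n\left(1-t^r q^{rj}\right)$ becomes $(1-t)\prod_{j=1}^n\left(1-t^2 q^{2j}\right)$, matching the claimed denominator exactly, while the numerator $\sum_{(\pi,\s)\in\Z_2\wr S_n} t^{\ndes(\pi,\s)} q^{\nmaj(\pi,\s)}$ becomes the sum over $B_n$. This completes the identification.

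Alternatively, one could give a self-contained geometric proof by rerunning the argument of Theorem~\ref{wreathcubeBthm} with $r=2$---triangulating $\cone([0,1]^n)$ by the braid arrangement, using the scaled ray generators $\e_0,\,2(\e_0+\e_{\pi(1)}),\,\ldots,\,2(\e_0+\e_{\pi(1)}+\cdots+\e_{\pi(n)})$, and shifting the fundamental parallelepiped as described in Section~\ref{sec:unim}---and then specializing the resulting multivariate identity via $t:=z_0$ and $q:=z_1=\cdots=z_n$. Since this is literally the $r=2$ instance of a theorem already proved, invoking the specialization directly is the cleaner route. The main (and essentially only) point requiring attention is the bookkeeping that identifies the wreath-product $\ndes$ and $\nmaj$ with the original Adin--Brenti--Roichman statistics; I do not anticipate any substantive obstacle beyond this.
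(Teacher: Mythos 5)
Your proposal matches the paper's own treatment: Theorem~\ref{negBthm} is presented there exactly as the $r=2$ specialization of Theorem~\ref{wreathnegBthm} (which the paper proves by specializing the multivariate Theorem~\ref{wreathcubeBthm} via $t:=z_0$, $q:=z_1=\cdots=z_n$), with the statistics of Section~\ref{wreathsection} reducing to the type-$B$ ones just as you describe. The one wrinkle in your ``routine bookkeeping'' step --- that Adin--Brenti--Roichman originally defined the negative statistics using the natural order rather than the order of Definition~\ref{BnDes} --- is handled in the paper by citing Adin--Brenti--Roichman's own observation that either order yields the same identity, so your argument is complete once that remark is invoked.
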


\begin{theorem}[Adin--Brenti--Roichman]\label{flagBthm}
\[
  \sum_{ k \ge 0 } [k+1]_q^n \, t^k
  \ = \ \frac{ \sum_{(\pi,\s) \in B_n } t^{ \fdes(\pi,\s) } q^{ \fmajor(\pi,\s) } }{ (1-t)\prod_{j=1}^n (1-t^2q^{2j}) } \, .
\]
\end{theorem}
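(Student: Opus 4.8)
The plan is to derive Theorem~\ref{flagBthm} as the specialization $r=2$ of the wreath-product identity in Theorem~\ref{wreathflagBthm}, since $B_n = \Z_2 \wr S_n$ by the discussion at the start of Section~\ref{singlevariablebackground}. The entire content is really just substituting $r=2$ into a theorem we have already proved, so the proof should be short: I would state that we apply Theorem~\ref{wreathflagBthm} with $r=2$ and observe that both sides simplify to the claimed expressions.

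First I would check the denominator. In Theorem~\ref{wreathflagBthm} the denominator of each summand is $(1-t)\prod_{j=1}^n(1-t^r q^{rj})$ after the specialization $t:=z_0$, $q:=z_1=\cdots=z_n$; setting $r=2$ this becomes exactly $(1-t)\prod_{j=1}^n(1-t^2 q^{2j})$, matching the denominator in Theorem~\ref{flagBthm}. Second, I would verify the left-hand side: the series $\sum_{k\ge 0}[k+1]_q^n\, t^k$ is independent of $r$ and is common to both statements, so nothing needs to be done there. The only remaining point is the numerator, where Theorem~\ref{wreathflagBthm} produces $\sum_{(\pi,\s)\in\Z_2\wr S_n} t^{\fdes(\pi,\s)} q^{\fmajor(\pi,\s)}$; since $\Z_2\wr S_n = B_n$ and the flag statistics $\fdes$ and $\fmajor$ of Definition~\ref{wreathflagdef} are defined uniformly for all $\Z_r\wr S_n$, this is precisely the numerator appearing in Theorem~\ref{flagBthm}. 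Thus the identity follows immediately.

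There is essentially no obstacle here, because the type-$B$ statement is a literal special case of the wreath-product statement rather than an independent result; the work has all been done in the proof of Theorem~\ref{wreathflagcubeBthm} (the multivariate version) and its bivariate corollary Theorem~\ref{wreathflagBthm}. If anything, the only thing worth flagging for the reader is that, when $r=2$, the color $c_1\in\{0,1\}$ and the color-change values $a_j^\s\in\{0,1\}$ take only two values, so $\fdes(\pi,\s)=2\,\des_A(\pi,\s)+c_1$ and $\fmajor(\pi,\s)=2\,\major_A(\pi,\s)+\col(\pi,\s)$ recover the familiar type-$B$ flag statistics of Adin--Brenti--Roichman; but verifying this is routine and requires no new idea. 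I would therefore present the proof as a one-line specialization, noting the $r=2$ substitution and the identification $B_n=\Z_2\wr S_n$, and refer the reader back to Theorem~\ref{wreathflagBthm} for the details.

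Analogously, Theorem~\ref{negBthm} is obtained by specializing Theorem~\ref{wreathnegBthm} at $r=2$, with the negative statistics $\ndes$ and $\nmaj$ already defined uniformly for all $\Z_r\wr S_n$; the denominator again collapses to $(1-t)\prod_{j=1}^n(1-t^2 q^{2j})$, and the numerator becomes the sum over $B_n$ of $t^{\ndes(\pi,\s)}q^{\nmaj(\pi,\s)}$. Both type-$B$ theorems therefore have identical one-line proofs by specialization, and I would group them together if the exposition permits.
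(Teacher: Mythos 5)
Your proposal is correct and matches the paper's own treatment: the paper states Theorem~\ref{flagBthm} (and Theorem~\ref{negBthm}) precisely as the $r=2$ specialization of the wreath-product identities proved via Theorem~\ref{wreathflagcubeBthm}, using the identification $B_n=\Z_2\wr S_n$. The one nuance you gloss as ``routine''---that the paper's $\fmajor$ uses the ordering of Definition~\ref{BnDes} rather than Adin--Brenti--Roichman's natural order---is handled in the paper by citing their remark that either order yields the same identity, but since the theorem as stated uses the paper's own definitions, your specialization argument is complete as written.
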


Note that in the original work of Adin--Brenti--Roichman, the flag and negative statistics were defined using the natural order; in that context, the flag major index was denoted by $\mathrm{fmaj}$ rather than $\fmajor$.
However, Adin--Brenti--Roichman point out in \cite[p.\ 218]{adinbrentiroichman} that either order can be used to obtain Theorems~\ref{negBthm}~and~\ref{flagBthm}.
Our generalizations in type $B$ are the following corollaries of Theorems~\ref{wreathcubeBthm}~and~\ref{wreathflagcubeBthm}, again using notation from Section~\ref{wreathsection}.

\begin{corollary}\label{cubeBthm}
\begin{align*}
 \sum_{k \ge 0 } \prod_{j=1}^n [k+1]_{ z_j }  z_0^k \ = \
\sum_{\pi\in S_n} \sum_{(\rho,\s)\in I_{2,n}}  &  \frac{\displaystyle \prod_{j\in \Des(\pi)} z_0z_{\pi(1)}z_{\pi(2)}\cdots z_{\pi(j)} \prod_{j\in \NNeg((\rho,\s)^{-1})} z_0z_{\pi(1)}z_{\pi(2)}\cdots z_{\pi(j)} }{\displaystyle (1-z_0)\prod_{j=1}^n \left(1-z_0^2z_{\pi(1)}^2\cdots z_{\pi(j)}^2 \right) } \, .
\end{align*}
\end{corollary}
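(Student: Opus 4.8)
The plan is to derive Corollary~\ref{cubeBthm} directly from Theorem~\ref{wreathcubeBthm} by specializing to $r=2$. Since the hyperoctahedral group $B_n$ is precisely the wreath product $\Z_2\wr S_n$, every object appearing in the corollary is the $r=2$ instance of its counterpart in the general wreath-product identity, so essentially no work beyond a careful substitution should be required.

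First I would substitute $r=2$ into the right-hand side of Theorem~\ref{wreathcubeBthm}. The inner index set $I_{r,n}$ becomes $I_{2,n}$, the set of increasing elements of $\Z_2\wr S_n$ from Definition~\ref{Irn}, matching the summation range in the corollary. The two numerator products, taken over $\Des(\pi)$ and over the multiset $\NNeg((\rho,\s)^{-1})$, retain exactly the same form, since each depends only on the permutation $\pi$ and on the increasing element $(\rho,\s)$, both of which remain meaningful when $r=2$.

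Next I would confirm that the denominator specializes as claimed. The denominator in Theorem~\ref{wreathcubeBthm}, namely $(1-z_0)\prod_{j=1}^n\bigl(1-z_0^r z_{\pi(1)}^r\cdots z_{\pi(j)}^r\bigr)$, becomes $(1-z_0)\prod_{j=1}^n\bigl(1-z_0^2 z_{\pi(1)}^2\cdots z_{\pi(j)}^2\bigr)$ upon setting $r=2$, which is exactly the denominator in Corollary~\ref{cubeBthm}. The left-hand side $\sum_{k\ge 0}\prod_{j=1}^n[k+1]_{z_j}\,z_0^k$ is independent of $r$ and so is unaffected by the substitution.

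I do not expect any genuine obstacle: the entire content of the corollary is already contained in Theorem~\ref{wreathcubeBthm}, and the passage to $B_n$ is purely a matter of setting $r=2$ and matching notation. The only point meriting a line of verification is that the total order of Definition~\ref{BnDes}, the increasing set $I_{2,n}$, and the negative inverse multiset are each literally the $r=2$ specializations used in Section~\ref{wreathsection}, which holds by construction.
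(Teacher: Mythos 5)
Your proposal is correct and matches the paper exactly: the paper presents Corollary~\ref{cubeBthm} precisely as the $r=2$ specialization of Theorem~\ref{wreathcubeBthm}, using the identification $B_n=\Z_2\wr S_n$ and the notation of Section~\ref{wreathsection}. Your verification that the index set, numerator products, and denominator all specialize term-by-term is the entire content of the paper's (implicit) argument.
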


\begin{corollary}\label{flagcubeBthm}
\begin{align*}
  \sum_{ k \ge 0 } \prod_{ j=1 }^n [k+1]_{ z_j } z_0^k \ = \
  \sum_{ (\pi,\s)\in B_n }  & \frac{ \displaystyle  \prod_{\substack{j\in \Des(\pi) \\ a_j^\s=0 }}z_0^2z_{\pi(1)}^2z_{\pi(2)}^2\cdots z_{\pi(j)}^2 \prod_{j:a_j^\s=1}z_0z_{\pi(1)}z_{\pi(2)}\cdots z_{\pi(j)}  }{ \displaystyle (1-z_0)\prod_{ j=1 }^n \left( 1 - z_0^2 \, z_{\pi(1) }^{2} z_{\pi(2) }^{2} \cdots z_{\pi(j)}^{2} \right) } \, .  
\end{align*}
\end{corollary}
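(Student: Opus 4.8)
The plan is to obtain this identity as the direct specialization of Theorem~\ref{wreathflagcubeBthm} to the case $r=2$. Since the hyperoctahedral group $B_n$ is precisely the wreath product $\Z_2\wr S_n$, every element $(\pi,\s)\in B_n$ is an element of $\Z_2\wr S_n$, so the two sums are indexed by the same set and no reindexing is required. I would therefore set $r=2$ throughout the summand of Theorem~\ref{wreathflagcubeBthm} and check that each factor reduces to the corresponding factor in the corollary.

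First I would substitute $r=2$ into the denominator. The factor $(1-z_0)\prod_{j=1}^n(1-z_0^r z_{\pi(1)}^r\cdots z_{\pi(j)}^r)$ becomes $(1-z_0)\prod_{j=1}^n(1-z_0^2 z_{\pi(1)}^2\cdots z_{\pi(j)}^2)$, which is exactly the denominator appearing in the statement. I would then treat the first numerator product, taken over $j\in\Des(\pi)$ with $a_j^\s=0$: its factor $z_0^r z_{\pi(1)}^r\cdots z_{\pi(j)}^r$ specializes at $r=2$ to $z_0^2 z_{\pi(1)}^2\cdots z_{\pi(j)}^2$, matching the corollary verbatim.

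The one point meriting attention is the second numerator product. The key observation is that when $r=2$ the color change $a_j^\s=(c_j-c_{j+1})\bmod 2$ takes values only in $\{0,1\}$. Consequently each factor $z_0^{a_j^\s}z_{\pi(1)}^{a_j^\s}\cdots z_{\pi(j)}^{a_j^\s}$ equals $1$ when $a_j^\s=0$ and equals $z_0 z_{\pi(1)}\cdots z_{\pi(j)}$ when $a_j^\s=1$, so the product $\prod_{j=1}^n z_0^{a_j^\s}z_{\pi(1)}^{a_j^\s}\cdots z_{\pi(j)}^{a_j^\s}$ collapses to $\prod_{j:\,a_j^\s=1}z_0 z_{\pi(1)}\cdots z_{\pi(j)}$. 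Assembling these three simplifications yields the right-hand side of the corollary. I do not expect any genuine obstacle here: the argument is a routine specialization, and the only substantive step is recognizing that at $r=2$ the exponents $a_j^\s$ are binary, so that what was an exponent in the general identity becomes an index condition in the type-$B$ statement.
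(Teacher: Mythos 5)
Your proposal is correct and matches the paper's derivation exactly: the paper presents this statement as an immediate corollary of Theorem~\ref{wreathflagcubeBthm} obtained by setting $r=2$, with the only nontrivial observation being precisely the one you make, namely that the exponents $a_j^\s$ become binary so the product $\prod_{j=1}^n z_0^{a_j^\s}z_{\pi(1)}^{a_j^\s}\cdots z_{\pi(j)}^{a_j^\s}$ collapses to $\prod_{j:\,a_j^\s=1}z_0 z_{\pi(1)}\cdots z_{\pi(j)}$. No gaps.
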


In the original work of Adin, Brenti, and Roichman \cite{adinbrentiroichman}, it was left as an open question to give a bijective proof in type $B$ of the equidistribution of the pairs of statistics $(\ndes,\nmaj)$ and $(\fdes,\fmajor)$; a combinatorial proof in type $B$ leading to an implicit bijection was given by Lai and Petersen in \cite{petersenlai}.
When restricted to type $B$, the proof of Corollary~\ref{cor:wreathbijection} also produces such a bijection.


\subsection{Identities involving $(2k+1)^n$}

Recall Definition~\ref{BnNatDes} which introduced $\NatDes(\pi,\s)$ and $\natdes(\pi,\s)$.
For an element $(\pi,\s)\in B_n$, the \emph{naturally ordered major index} is
\[
\natmaj (\pi, \s):= \sum_{i\in \NatDes(\pi,\s)}i \, .
\]
Further, for an element $(\pi,\s)\in B_n$, we write $\nega(\pi,\s):=\col(\pi,\s)$, where $\col(\pi,\s)$ is given in Definition~\ref{typeAwreathstats}, to emphasize that this statistic counts the number of negative signs in the window for $(\pi,\s)$.
Chow and Gessel \cite[Equation (26)]{chowgessel} proved the following hyperoctahedral analogue of Theorem~\ref{macmahoncarlitz}:

\begin{theorem}[Chow--Gessel]\label{chowgesselthm}
\[
  \sum_{ k \ge 0 } \left( [k+1]_q + s \, [k]_q \right)^n t^k
  \ = \ \frac{ \sum_{ \pi \in S_n, \, \s \in \left\{ \pm 1 \right\}^n } s^{ \nega(\pi,\s) } t^{ \natdes(\pi, \s) } q^{ \natmaj(\pi, \s) } }{ \prod_{ j=0 }^n \left( 1 - t q^j \right) } \, .
\]
\end{theorem}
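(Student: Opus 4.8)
The plan is to mimic the geometric template of Theorem~\ref{Atheorem}, but to replace the braid-arrangement triangulation of $[0,1]^n$ by the triangulation of the symmetric cube $[-1,1]^n$ induced by the type-$B$ Coxeter arrangement, i.e.\ the hyperplanes $x_i = 0$ and $x_i = \pm x_j$. First I would reinterpret the left-hand side as a weighted integer-point transform of $\cone([-1,1]^n)$: at dilation level $k$ (recorded by $z_0^k = t^k$) the lattice points are the $\v \in [-k,k]^n \cap \Z^n$, and I weight a coordinate value $v_i$ by $q^{v_i}$ when $v_i \ge 0$ and by $s\,q^{-v_i-1}$ when $v_i < 0$. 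Since the per-coordinate generating function is then exactly $\sum_{v=0}^k q^v + s\sum_{v=1}^k q^{v-1} = [k+1]_q + s\,[k]_q$, the full weighted transform of $\cone([-1,1]^n)$ is the left-hand side. (I verified the resulting identity by hand for $n=1$, where both sides equal $(1+st)/\big((1-t)(1-tq)\big)$.)

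Next I would cut $[-1,1]^n$ into the $2^n n!$ half-open simplices $\Delta_{(\pi,\s)}$ determined by the type-$B$ arrangement, one for each signed permutation, with facets removed so that the decomposition is disjoint and with the strict inequalities arranged to encode the naturally ordered descents of Definition~\ref{BnNatDes}; in particular the coordinate hyperplanes $x_i=0$ are what make the position-$0$ descent (a negative first entry) visible, and this is exactly the feature that ties the argument to the full type-$B$ arrangement rather than the braid arrangement alone. Each $\Delta_{(\pi,\s)}$ is the image of the unimodular fundamental simplex $\{0 \le x_1 \le \dots \le x_n \le 1\}$ under a signed permutation matrix, hence is itself unimodular, so each $\cone(\Delta_{(\pi,\s)})$ is unimodular and amenable to Lemmas~\ref{conelemma} and~\ref{genconelemma}.

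The heart of the argument is the choice of ray generators for $\cone(\Delta_{(\pi,\s)})$ that converts the asymmetric weight above into honest monomials. On each negative coordinate I would apply the reflection $v_i \mapsto -v_i-1$, which turns its weight $s\,q^{-v_i-1}$ into $s$ times $q^{(\text{reflected value})}$ and turns the cone into a shifted copy of a type-$A$ cone: its generators become $\e_0$ together with the consecutive partial sums $\e_0 + \e_{i_1} + \dots + \e_{i_j}$ dictated by the natural order of the (now nonnegative) coordinates. Under $z_0 = t$, $z_1 = \dots = z_n = q$ these specialize uniformly to $t, tq, \dots, tq^n$, which is exactly why the denominator collapses to the same product $\prod_{j=0}^n(1-tq^j)$ appearing in the Carlitz identity (Theorem~\ref{macmahoncarlitz}). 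I would then read off the numerator using the two bookkeeping devices of Section~\ref{sec:unim}: the reflections contribute one factor of $s$ per negative coordinate, giving $s^{\nega(\pi,\s)}$; and each removed (strict) facet forces a shift of the fundamental parallelepiped by the opposite ray generator $\e_0 + \e_{i_1} + \dots + \e_{i_j}$, so that a descent at natural position $j$ produces the factor $t\,q^{j}$ — including the position-$0$ descent, whose shift is by $\e_0$ alone and so contributes $t\,q^0$. Summing over $\NatDes(\pi,\s)$ yields the numerator $s^{\nega(\pi,\s)}\,t^{\natdes(\pi,\s)}\,q^{\natmaj(\pi,\s)}$, and the disjointness of the triangulation gives $\sigma_{\cone([-1,1]^n)} = \sum_{(\pi,\s)} \sigma_{\cone(\Delta_{(\pi,\s)})}$, assembling the two computations into the claimed identity.

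I expect the main obstacle to be the precise matching in the previous paragraph: one must check that the reflection $v_i \mapsto -v_i - 1$, the half-open facets, and the chosen generators interact so that a descent at natural position $j$ contributes $t\,q^{j}$ (and not $t\,q^{j\pm1}$), that the position-$0$ descent is handled correctly and is the sole source of the ``$[k]_q$ versus $[k+1]_q$'' asymmetry, and that negative coordinates are counted by $\nega$ exactly once each, with $v_i=0$ consistently assigned to the nonnegative side by the facet-removal convention. Getting these index conventions to line up — equivalently, establishing a clean statistic translation in the spirit of the ones used in the wreath-product proofs — is where the real work lies; once it is in place, the remainder is a direct transcription of the proof of Theorem~\ref{Atheorem}.
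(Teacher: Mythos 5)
Your first two paragraphs are sound and are in substance the paper's own setup: the weighted transform of $\cone([-1,1]^n)$ does produce the left-hand side (your weight $sq^{-v-1}$ on negative values differs from the paper's $sq^{-v}$ only by a factor $q^{-\nega}$, which is why you get $[k+1]_q+s[k]_q$ directly where the paper first gets $[k+1]_q+sq[k]_q$), and your type-$B$ triangulation coincides with the paper's decomposition into boxes $\Box_{(\rho,\gamma)}$ refined by the sub-arrangement of $x_i=\pm x_j$, i.e., the cells are exactly the paper's $\Delta_{(\pi,\s)}$. The genuine gap is in your third paragraph, at exactly the point you flagged as the real work. The cell of $(\pi,\s)$ is $\left\{0\le \s_1x_{\pi(1)}\le\cdots\le\s_nx_{\pi(n)}\le 1\right\}$, half-open at the natural descents, and the rays of its cone are forced: a simplicial cone's primitive ray directions admit no choice beyond scaling, and scaling changes powers, not which monomial $tq^i$ a ray specializes to. These rays are the \emph{suffix} vectors $\v_j=\e_0+\s_{j+1}\e_{\pi(j+1)}+\cdots+\s_n\e_{\pi(n)}$, $j=0,\dots,n$, of weight $tq^{n-j}$; the facet removed by a natural descent at position $j$ is opposite $\v_j$ and hence contributes $tq^{n-j}$, not $tq^{j}$, and in particular the position-$0$ descent is opposite the full suffix $\v_0$ of weight $tq^n$ --- not $\e_0$, which is opposite the top facet $\s_nx_{\pi(n)}=x_0$, the one that is never strict. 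Your reflection $v\mapsto -v-1$ cannot repair this: it is an affine-unimodular map whose translation part contributes precisely the constant $q^{-\nega(\pi,\s)}$, so your cell numerator comes out as $s^{\nega}q^{-\nega}\,t^{\natdes}q^{\,n\cdot\natdes-\natmaj}$ rather than $s^{\nega}t^{\natdes}q^{\natmaj}$. Concretely, for $n=2$ and $(\pi,\s)=[1^1\,2]$ (cell $0<-x_1\le x_2\le 1$, with $\natdes=1$, $\natmaj=0$, $\nega=1$) your recipe yields $stq$ where Chow--Gessel requires $st$; the sums over $B_n$ agree, but not cell by cell, so your per-cell reading of the numerator is false and the identity does not yet follow.

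The missing step is the paper's closing move. One proves the multivariate identity with the suffix monomials as they stand (Theorem~\ref{Btheorem}), specializes $t:=z_0$ and $q:=z_1=\cdots=z_n=z_{-1}^{-1}=\cdots=z_{-n}^{-1}$ to obtain numerator summands $s^{\nega(\s)}\prod_{j\in\NatDes(\pi,\s)}tq^{n-j}$ over the (symmetric) denominator $\prod_{j=0}^n\left(1-tq^{n-j}\right)=\prod_{j=0}^n\left(1-tq^{j}\right)$, and then applies the change of variables $q\mapsto 1/q$, $t\mapsto tq^n$, which sends $tq^{n-j}\mapsto tq^j$, fixes the denominator, and converts the left-hand side from $\left([k+1]_q+sq[k]_q\right)^nt^k$ into $\left([k+1]_q+s[k]_q\right)^nt^k$. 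Your normalization $sq^{-v-1}$ merely relocates the discrepancy from the left side to the right side; the substitution encodes a genuine global symmetry of the joint distribution (comajor-type versus major-type exponents), not a per-cell identity, and it is unavoidable in this geometry. Note also that a naive window-reversal cannot substitute for it, because position-$0$ descents are anchored at the left end of the natural order. With this final change of variables appended, the rest of your outline matches the paper's proof.
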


The special case $q=1$ is due to Brenti \cite[Theorem 3.4]{brentieulerian}.
Chow and Gessel also showed in \cite{chowgessel} how Theorem \ref{chowgesselthm} implies other versions of ``$q$-Eulerian polynomials" of type $B$ involving a flag major index statistic using the natural order, such as the following.

\begin{theorem}[Chow--Gessel]\label{newchowgesselflagthm}
\[
  \sum_{ k \ge 0 }  [2k+1]_q^n t^k
  \ = \ \frac{\sum_{(\pi,\s)\in B_n}t^{\natdes(\pi,\s)}q^{\natfmaj(\pi,\s)} }{ \prod_{ j=0 }^n \left( 1 - t q^{2j} \right) } \, .
\]
\end{theorem}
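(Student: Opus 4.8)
The plan is to obtain this identity as a direct specialization of the Chow--Gessel identity in Theorem~\ref{chowgesselthm}, rather than by a fresh geometric argument; the required substitution is precisely the one recorded in the definition $\natfmaj(\pi,\s) = 2\,\natmaj(\pi,\s) + \nega(\pi,\s)$ of the natural flag major index, which I would confirm first.

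The engine is an elementary $q$-identity obtained by separating $[2k+1]_q$ into its even- and odd-degree terms:
\[
[2k+1]_q = \left( 1 + q^2 + \cdots + q^{2k} \right) + \left( q + q^3 + \cdots + q^{2k-1} \right) = [k+1]_{q^2} + q\,[k]_{q^2} \, .
\]
Consequently the substitution $q \mapsto q^2$, $s \mapsto q$ carries $[k+1]_q + s\,[k]_q$ to $[2k+1]_q$, and hence the left-hand side of Theorem~\ref{chowgesselthm} to the left-hand side of the present theorem.

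Next I would apply the same substitution to the right-hand side of Theorem~\ref{chowgesselthm}. The denominator $\prod_{j=0}^n (1 - tq^j)$ becomes $\prod_{j=0}^n (1 - tq^{2j})$, matching the claimed denominator exactly. In the numerator, each summand $s^{\nega} t^{\natdes} q^{\natmaj}$ becomes $t^{\natdes} q^{2\,\natmaj + \nega} = t^{\natdes} q^{\natfmaj}$, so that summing over $B_n$ produces $\sum_{(\pi,\s)\in B_n} t^{\natdes} q^{\natfmaj}$, which is the desired numerator. The substitution is legitimate because $\natdes$, $\natmaj$, and $\nega$ are tracked by the three independent formal variables $t$, $q$, and $s$ in Theorem~\ref{chowgesselthm}, so replacing $q$ by $q^2$ and $s$ by $q$ causes no collision among the statistics.

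Along this route there is no genuine obstacle beyond Theorem~\ref{chowgesselthm} itself, which I am taking as established; the only points demanding care are confirming the definition $\natfmaj = 2\,\natmaj + \nega$ and the $\s_0\pi(0)=0$ convention for the zero descent position, so that the exponent arithmetic matches on the nose. For completeness, and in keeping with the paper's geometric viewpoint, I would remark that Theorem~\ref{newchowgesselflagthm} also admits a self-contained polyhedral proof: triangulate the centered cube $[-1,1]^n$ by the type-$B$ Coxeter arrangement $\{x_i = \pm x_j,\ x_i = 0\}$ into $2^n n!$ simplices indexed by signed permutations in the natural order, cone over each, and read off $\natdes$ and $\natfmaj$ from the fundamental-parallelepiped lattice points via Lemma~\ref{genconelemma}. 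In that approach the hard part is exactly the choice of ray generators forcing the zero descent and the $\nega$ contribution to $\natfmaj$ to emerge correctly --- which is why it ``relies heavily on the type-$B$ Coxeter arrangement'' --- whereas the substitution route sidesteps this difficulty entirely.
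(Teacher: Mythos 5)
Your proof is correct, but it takes a genuinely different route from the paper's. You derive Theorem~\ref{newchowgesselflagthm} from Theorem~\ref{chowgesselthm} by the substitution $q\mapsto q^2$, $s\mapsto q$, using $[2k+1]_q=[k+1]_{q^2}+q\,[k]_{q^2}$; this is exactly the change-of-variables equivalence that the paper attributes to Chow--Gessel and deliberately does \emph{not} use. The paper instead obtains the theorem from Theorem~\ref{newBtheorem} (the $r=2$ case of Theorem~\ref{wreathnewBthm}, proved geometrically by triangulating $[0,2]^n$ via the braid arrangement and analyzing non-unimodular cones): specializing $t:=z_0$, $q:=z_1=\cdots=z_n$ there yields the bivariate identity for the pair $(\des,\fmajor)$, and then the explicit bijection reversing the order of the negatively signed entries in the window shows $(\natdes,\natfmaj)$ and $(\des,\fmajor)$ are equidistributed on $B_n$, giving the stated form. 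Your route is shorter and purely formal once Theorem~\ref{chowgesselthm} is granted; the paper's route buys the structural point it explicitly advertises, namely that Theorems~\ref{chowgesselthm} and~\ref{newchowgesselflagthm} arise as specializations of two \emph{distinct} multivariate identities (Theorem~\ref{Btheorem} over $[-1,1]^n$ with the type-$B$ arrangement, versus Theorem~\ref{wreathnewBthm} over $[0,r]^n$ with the braid arrangement), together with a Hilbert-series interpretation for the latter.

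Two small points of care. First, the paper defines $\natfmaj(\pi,\s)=2\cdot\natmajor_A(\pi,\s)+\nega(\pi,\s)$ using $\natmajor_A$, whose descent set lives in $[n-1]$, whereas the exponent your substitution produces is $2\cdot\natmaj(\pi,\s)+\nega(\pi,\s)$ with $\natmaj$ summed over $\NatDes(\pi,\s)\subseteq\{0,1,\dots,n-1\}$; these agree because a zero descent contributes $0$ to the major-index sum, but this one-line observation should be stated since the possible zero descent does change $\natdes$ versus $\natdes_A$. Second, your closing aside misattributes the geometry: the $[-1,1]^n$ triangulation by the type-$B$ Coxeter arrangement is what underlies Theorem~\ref{Btheorem} (the multivariate extension of Theorem~\ref{chowgesselthm}); the paper's geometric source for Theorem~\ref{newchowgesselflagthm} is the $[0,2]^n$ construction of Theorem~\ref{newBtheorem}, and the natural-order statistics are not read off directly from any triangulation there --- they enter only through the equidistribution bijection.
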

The statistic $\natfmaj$ used above is defined as follows.

\begin{definition}
Use the order $-n<\cdots<-1<1<\cdots<n$ on $[-n,n]\setminus\{0\}$.
We define the \emph{natural type-$A$ descent set} as
\[
\NatDes \!\mbox{}_A(\pi,\s):= \{i\in [n-1] : \s_i\pi_i > \s_{i+1}\pi_{i+1} \} \,
\]
while the \emph{natural type-$A$ descent statistic} is $\natdes_A(\pi,\s):=\#\NatDes \!\mbox{}_A(\pi,\s)$.
The \emph{natural type-$A$ major index} is defined as
\[
\natmajor_A(\pi,\s):=\sum_{i \in \NatDes_A(\pi,\s)} i \, .
\]
The \emph{natural flag major index} is
\[
\natfmaj(\pi,\s):= 2\cdot \natmajor_A(\pi,\s) + \nega(\pi,\s) \, .
\]
\end{definition}

\begin{remark}
Through this work, $\natdes$ will always refer to the statistic introduced in Definition~\ref{BnNatDes} while $\natdes_A$ will be used to indicate the definition given above.
\end{remark}

While it is observed by Chow--Gessel in \cite{chowgessel} that Theorems~\ref{chowgesselthm} and~\ref{newchowgesselflagthm} are equivalent via a change of variables, the geometric perspective illustrates how these two theorems arise as specializations of two distinct multivariate generating-function identities.


\subsection{Multivariate identities}\label{bsection}

For the type-$B$ generalization of Theorem~\ref{chowgesselthm}, we introduce the variables $z_{\pm j}$ to keep track of the positive/negative $j$th component of a lattice point, respectively, and the variable $s$ to indicate the presence in each coordinate of our point of a negative sign.

\begin{theorem}\label{Btheorem}
\begin{align*}
  \sum_{ k \ge 0 } \prod_{ j=1 }^n \left( [k+1]_{ z_j } + s \, z_{-j}^{ -1 } [k]_{ z_{-j}^{ -1 } } \right) z_0^k \ = \
  \sum_{ (\pi,\s)\in B_n } \displaystyle s^{\nega(\s)} \phantom{,} \frac{\displaystyle \prod_{ j \in \NatDes(\pi, \s) } z_0z_{ \s_{j+1} \pi(j+1) }^{ \s_{j+1} } z_{ \s_{j+2} \pi(j+2) }^{ \s_{j+2} } \cdots z_{\s_n \pi(n)}^{ \s_n } }{ \displaystyle \prod_{ j=0 }^n \left( 1 - z_0 \, z_{ \s_{j+1} \pi(j+1) }^{ \s_{j+1} } z_{ \s_{j+2} \pi(j+2) }^{ \s_{j+2} } \cdots z_{\s_n \pi(n)}^{ \s_n } \right) } \, .  
\end{align*}
\end{theorem}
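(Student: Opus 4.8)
The plan is to reproduce the geometric template of Theorem~\ref{Atheorem}, replacing the braid arrangement on $[0,1]^n$ by the type-$B$ Coxeter arrangement (the hyperplanes $x_i=x_j$, $x_i=-x_j$, and $x_i=0$) on the symmetric cube $[-1,1]^n$. First I would read the left-hand side as an integer-point transform. In the grading where a lattice point $\x$ at level $x_0=k$ is weighted by $z_0^k\, s^{\,\#\{i:\,x_i<0\}}\prod_{x_i\neq 0} z_{\mathrm{sgn}(x_i)\,i}^{\,x_i}$, the points of $\cone([-1,1]^n)$ at level $k$ fill $\{k\}\times[-k,k]^n$, and summing the weight of the $i$th coordinate over $-k,\dots,k$ gives $\sum_{m=0}^k z_i^m + s\sum_{m=1}^k z_{-i}^{-m}=[k+1]_{z_i}+s\,z_{-i}^{-1}[k]_{z_{-i}^{-1}}$. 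Thus the left-hand side is exactly $\sigma_{\cone([-1,1]^n)}$ in these variables.

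Second, for each $(\pi,\s)\in B_n$ I would introduce the half-open simplex $\Delta_{(\pi,\s)}$ whose closure is $\{\x:\ \mathrm{sgn}(x_{\pi(i)})=\s_i,\ |x_{\pi(1)}|\le|x_{\pi(2)}|\le\cdots\le|x_{\pi(n)}|\le 1\}$, with vertices $\0$ and the partial sums $\sum_{i=m}^n \s_i\e_{\pi(i)}$ for $m=1,\dots,n$. Each $\Delta_{(\pi,\s)}$ is the image of the type-$A$ simplex $\Delta_\pi$ under the signed permutation $\e_{\pi(i)}\mapsto \s_i\e_{\pi(i)}$, hence unimodular, and coning over it produces precisely the ray generators whose monomials $\z^{g_j}:=z_0\,z_{\s_{j+1}\pi(j+1)}^{\s_{j+1}}\cdots z_{\s_n\pi(n)}^{\s_n}$ (for $j=0,\dots,n$) appear in the denominator. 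The half-open structure is governed by $\NatDes(\pi,\s)$: I would remove the facet opposite $g_j$ exactly when $j\in\NatDes(\pi,\s)$, observing that the facet opposite $g_0$ is the coordinate hyperplane $x_{\pi(1)}=0$ (removed iff $\s_1=-1$, i.e. $0\in\NatDes$), while for $1\le j\le n-1$ it is $|x_{\pi(j)}|=|x_{\pi(j+1)}|$ (removed iff $\s_j\pi(j)>\s_{j+1}\pi(j+1)$).

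With the pieces in place, the computation follows Theorem~\ref{Atheorem} verbatim: by Lemma~\ref{genconelemma}, unimodularity makes $\sigma_{\cone(\Delta_{(\pi,\s)})}$ a rational function with denominator $\prod_{j=0}^n(1-\z^{g_j})$ and numerator the single monomial $\prod_{j\in\NatDes(\pi,\s)}\z^{g_j}$, and summing over $B_n$ recovers $\sigma_{\cone([-1,1]^n)}$. The disjointness of the decomposition is the type-$B$ analogue of Lemma~4.5.1 of \cite{stanleyec1}: a point with distinct nonzero $|x_i|$ lies in the unique piece sorting its coordinates by absolute value with $\s_i=\mathrm{sgn}(x_{\pi(i)})$, and the ties on $x_i=\pm x_j$ and $x_i=0$ are resolved by the rule that natural ascents give closed facets and natural descents give open ones. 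Here I would check that for a tie $|x_{\pi(j)}|=|x_{\pi(j+1)}|$ exactly one of the two consecutive orderings is a natural ascent (in all three sign patterns), and that $x_{\pi(1)}=0$ is consistently placed in the $\s_1=+1$ piece.

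The step I expect to demand the most care is the appearance of $s^{\nega(\s)}$ as a single global monomial, since a lattice point with vanishing coordinates could a priori have fewer than $\nega(\s)$ strictly negative entries. The resolution rests on the half-open structure: a coordinate $x_{\pi(i)}$ can vanish only if $|x_{\pi(1)}|=\cdots=|x_{\pi(i)}|=0$, which forces $x_{\pi(1)}=0$, hence $0\notin\NatDes$ and $\s_1=+1$, and simultaneously forces the window to ascend through positions $1,\dots,i$; since this ascending chain begins at the positive value $\pi(1)$, all of $\s_1,\dots,\s_i$ must equal $+1$, so no negative coordinate is ever zeroed out. Consequently every lattice point of $\Delta_{(\pi,\s)}$ has exactly $\nega(\s)$ strictly negative coordinates and $s^{\nega(\s)}$ factors out of $\sigma_{\cone(\Delta_{(\pi,\s)})}$, completing the identity; Theorem~\ref{chowgesselthm} is then recovered by specializing $z_0\mapsto t$ and the $z_{\pm j}$ to suitable powers of $q$, under which $\NatDes$ delivers $\natdes$ and $\natmaj$ and the global factor delivers $s^{\nega}$.
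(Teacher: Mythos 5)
Your proposal is correct and takes essentially the same route as the paper: both sides are read as integer-point transforms of $\cone([-1,1]^n)$, which is decomposed into exactly the same half-open unimodular simplices $\Delta_{(\pi,\s)}$ indexed by $B_n$, with the half-open structure dictated by $\NatDes(\pi,\s)$ and the ray generators producing the denominator monomials. The only difference is organizational: the paper first splits $[-1,1]^n$ into half-open orthant boxes $\Box_{(\rho,\gamma)}$ indexed by $I_{2,n}^{\nat}$ (which makes disjointness and the strict negativity behind the uniform factor $s^{\nega(\s)}$ automatic) and then triangulates each box type-$A$-style, whereas you verify the tie-breaking and zero-coordinate cases directly --- correctly, and arriving at the identical decomposition.
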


\begin{proof}
Recall that we use the order 
\[
-n<-n+1<\cdots <-1<1<2<\cdots <n \, .
\]
As in Definition~\ref{Irn}, create the set of increasing elements, denoted $I_{2,n}^{\nat}$, using the natural order above.
It is straightforward from the discussion following Definition~\ref{Irn} to show that
\[
B_n=\bigcup_{\pi\in S_n}I_{2,n}^{\nat}\pi \, .
\]
For each $(\rho, \gamma)\in I_{2,n}^{\nat}$, the first $\nega(\gamma)$ elements of the permutation are negated, with labels $\rho_1,\ldots,\rho_{\nega (\gamma)}$.

For each $(\rho, \gamma)\in I_{2,n}^{\nat}$, define
\[
\Box_{(\rho,\gamma)}:=
\left\{ x\in \R^n:
     \begin{array}{ll}
       0 \le x_{\rho(j)} \le 1 & \text{ if } j\geq \nega(\gamma)+1, \\
       0 < -x_{\rho(j)} \le 1 & \text{ if } j\leq \nega(\gamma) 
     \end{array}
\right\} \, .
\]
It is straightforward to show that 
\[
[-1,1]^n=\bigcup_{(\rho,\gamma)\in I_{2,n}^{\nat}} \Box_{(\rho,\gamma)} \, ,
\]
where this union is disjoint; any point in $[-1,1]^n$ with negative entries in positions $i_1>\cdots>i_k$ is an element of $\Box_{(\rho,\gamma)}$ where $\rho_j=i_j$ and $\gamma$ has $-1$ in precisely the first $k$ entries.

Fix $(\rho,\gamma)\in I_{2,n}^{\nat}$, and for each $\tau\in S_n$ consider the element $(\pi,\s)=(\rho,\gamma)\tau$.
For each such $(\pi,\s)$, set
\[
\Delta_{(\pi,\s)}:=
\left\{ x\in \Box_{(\rho,\gamma)}: 
     \begin{array}{l}
       0\leq \s_1x_{\pi(1)}\leq \cdots \leq \s_nx_{\pi(n)}\leq 1 \\
       \s_jx_{\pi(j)}<\s_{j+1}x_{\pi(j+1)} \text{ if } j\in \NatDes(\pi,\s)
     \end{array}
\right\} \, ,
\]
where $\s_0x_{\pi(0)}=0$.
Thus, the left-most inequality might be strict, while the right-most inequality is never strict.
It follows that
\[
\Box_{(\rho,\gamma)}=\bigcup_{\substack{(\pi,\s)=(\rho,\gamma)\tau \\ \tau\in S_n}} \Delta_{(\pi,\s)} \, ,
\]
where our union is again disjoint.
Observe that this triangulation of $\Box_{(\rho,\gamma)}$ is induced by 
\[
\left\{x_i=x_j: i,j \leq \nega(\gamma) \text{ or } i,j \geq \nega(\gamma)+1  \right\} \cup \left\{x_i=-x_j: i\geq \nega(\gamma)+1 \text{ and } j\leq \nega(\gamma) \right\} \, ,
\]
a sub-arrangment of the type $B$ braid arrangement that intersects $\Box_{(\rho,\gamma)}$ in the same manner as the type $A$ braid arrangement intersects $[0,1]^n$.

For example, given $[2^1 \, 1^1 \, 3^0]=(\rho,\gamma)\in I_{2,n}^{\nat}$, the six elements of $[2^1 \, 1^1 \, 3^0]S_3$ are
\begin{align*}
[2^1 \, 1^1 \, 3^0]\circ [1 \, 2 \, 3] & = [2^1 \, 1^1 \, 3^0]\\
[2^1 \, 1^1 \, 3^0]\circ [2 \, 1 \, 3] & = [1^1 \, 2^1 \, 3^0]\\
[2^1 \, 1^1 \, 3^0]\circ [1 \, 3 \, 2] & = [2^1 \, 3^0 \, 1^1]\\
[2^1 \, 1^1 \, 3^0]\circ [3\,  2 \, 1] & = [3^0 \, 1^1 \, 2^1]\\
[2^1 \, 1^1 \, 3^0]\circ [2 \, 3 \, 1] & = [1^1 \, 3^0 \, 2^1]\\
[2^1 \, 1^1 \, 3^0]\circ [3 \, 1 \, 2] & = [3^0 \, 2^1 \, 1^1] \, ,
\end{align*}
giving rise to $\Box_{[2^1 \, 1^1 \, 3^0]}$ being a union of the six corresponding $\Delta_{(\pi,\s)}$'s shown below:
\begin{align*}
\Delta_{[2^1 \, 1^1 \, 3^0]} & = \{x\in \Box_{[2^1 \, 1^1 \, 3^0]}: 0 < -x_2 \leq -x_1 \leq x_3 \leq 1\} \\
\Delta_{[1^1 \, 2^1 \, 3^0]} & = \{x\in \Box_{[2^1 \, 1^1 \, 3^0]}: 0 < -x_1 < -x_2 \leq x_3\leq 1\} \\
\Delta_{[2^1 \, 3^0 \, 1^1]} & = \{x\in \Box_{[2^1 \, 1^1 \, 3^0]}: 0< -x_2 \leq x_3<-x_1 \leq 1\}\\ 
\Delta_{[3^0 \, 1^1 \, 2^1]} & = \{x\in \Box_{[2^1 \, 1^1 \, 3^0]}: 0\leq x_3<-x_1<-x_2\leq 1\}\\
\Delta_{[1^1 \, 3^0 \, 2^1]} & = \{x\in \Box_{[2^1 \, 1^1 \, 3^0]}: 0<-x_1\leq x_3<-x_2\leq 1\}\\
\Delta_{[3^0 \, 2^1 \, 1^1]} & = \{x\in \Box_{[2^1 \, 1^1 \, 3^0]}: 0\leq x_3<-x_2\leq -x_1\leq 1\} \, .
\end{align*}

A lattice point $\m\in \cone(\Box_{(\rho,\gamma)})$ gets encoded by the monomial 
\[
  z_0^{ m_0 } \prod_{ \s_j = -1 } s \, z_{ -j }^{ -m_j } \prod_{ \s_j = 1 } \, z_j^{ m_j } \, .
\]
Because of the definition of $\Delta_{(\pi,\s)}$, we can use our shifting techniques from Section~\ref{sec:unim} (either technique will suffice in this case) to conclude
\begin{align*}
 \sigma_{ \cone(\Delta_{( \pi, \s) }) } (z_0, z_{ \pm 1 }^{\pm 1}, & \dots, z_{ \pm n }^{\pm 1}, s) \ = \ 
 \frac{\displaystyle s^{\nega(\pi,\s)} \prod_{ j \in \NatDes(\pi, \s) } z_0 \, z_{ \s_{ j+1 } \pi(j+1) }^{ \s_{ j+1 } } z_{ \s_{ j+2 } \pi(j+2) }^{ \s_{ j+2 } } \cdots z_{ \s_n \pi(n) }^{ \s_n } }{\displaystyle \prod_{ j=0 }^n \left( 1 - z_0 \, z_{ \s_{ j+1 } \pi(j+1) }^{ \s_{ j+1 } } z_{ \s_{ j+2 }  \pi(j+2) }^{ \s_{ j+2 } } \cdots z_{ \s_n \pi(n) }^{ \s_n } \right) } \, .
\end{align*}
On the other hand,
\begin{align*}
  &\sigma_{ \cone([-1,1]^n) } (z_0, z_{\pm 1}^{\pm 1}, \dots, z_{\pm n}^{\pm 1}, s ) = \\
  &\qquad \sum_{ k \ge 0 } \prod_{ j=1 }^n \left( s \, z_{ -j }^{-k} + s \, z_{ -j }^{-(k-1)} + \dots + s \, z_{ -j }^{ -1 } + 1 + z_j + z_j^2 + \dots + z_j^k \right) z_0^k \, ,
\end{align*}
and the disjoint triangulations discussed above yield
\[
  \sigma_{ \cone([-1,1]^n) } (z_0, z_{\pm 1}^{\pm 1}, \dots, z_{\pm n}^{\pm 1}, s ) = \sum_{ (\pi,\s) \in B_n } \sigma_{ \cone(\Delta_{\pi,\s}) } (z_0, z_{\pm 1}^{\pm 1}, \dots, z_{\pm n}^{\pm 1}, s ) \, . \qedhere
\]
\end{proof}

\begin{proof}[Proof of Theorem \ref{chowgesselthm}]
Setting $t := z_0$ and $q := z_1 = \dots = z_n = z_{ -1 }^{ -1 } = \dots = z_{ -n }^{ -n }$ in Theorem \ref{Btheorem} gives
\[
  \sum_{ k \ge 0 } \left( [k+1]_q + s \, q [k]_q \right)^n t^k
  = \sum_{ (\pi,\s)\in B_n} s^{\nega(\s)} \, \frac{\displaystyle \prod_{j\in \NatDes(\pi,\s)} t q^{ n-j } }{ \prod_{ j=0 }^n \left( 1 - t q^{ n-j } \right) } \, .
\]
Applying the change of variables $q\mapsto \frac 1 q$ and $t\mapsto tq^n$ finishes the proof.
\end{proof}

Our multivariate generalization of Theorem~\ref{newchowgesselflagthm} is the following, which is a special case of Theorem~\ref{wreathnewBthm}.
Recall from Definition~\ref{wreathdefsignchange} the notation $\ch(\s)$ for the number of color changes in $\s$ and the notation $a_j^\s$ to keep track of where color changes occur.

\begin{theorem}\label{newBtheorem}
\begin{align*}
  &\sum_{ k \ge 0 } \prod_{ j=1 }^n [2k+1]_{ z_j }  z_0^k \ = \ 
  \sum_{ (\pi,\s)\in B_n }  \frac{ \displaystyle z_0^{\left\lceil  \ch(\s)/2 \right\rceil } \prod_{j:a_j^\s=1}z_{\pi(1)}z_{\pi(2)}\cdots z_{\pi(j)}  \prod_{\substack{j\in \Des(\pi) \\ a_j^\s=0 }}z_0z_{\pi(1)}^2z_{\pi(2)}^2\cdots z_{\pi(j)}^2 }{ \displaystyle \prod_{ j=0 }^n \left( 1 - z_0 \, z_{\pi(1) }^{2} z_{\pi(2) }^{2} \cdots z_{\pi(j)}^{2} \right) } \, .  
\end{align*}
\end{theorem}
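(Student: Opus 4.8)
The plan is to obtain Theorem~\ref{newBtheorem} as the direct specialization of Theorem~\ref{wreathnewBthm} to the case $r=2$, using the identification $\Z_2\wr S_n = B_n$ recorded in Section~\ref{singlevariablebackground}. Since Theorem~\ref{wreathnewBthm} already holds for every $r$, taking $r=2$ requires no new geometry whatsoever; the entire task reduces to checking that each factor in the summand of Theorem~\ref{wreathnewBthm} collapses to the corresponding factor in Theorem~\ref{newBtheorem}.

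First I would substitute $r=2$ into the left-hand side, turning $\prod_{j=1}^n [rk+1]_{z_j}\,z_0^k$ into $\prod_{j=1}^n [2k+1]_{z_j}\,z_0^k$, and into the denominator, turning each factor $1 - z_0\,z_{\pi(1)}^r\cdots z_{\pi(j)}^r$ into $1 - z_0\,z_{\pi(1)}^2\cdots z_{\pi(j)}^2$. The prefactor $z_0^{\lceil \ch(\s)/r\rceil}$ becomes $z_0^{\lceil \ch(\s)/2\rceil}$, and the product over $j\in\Des(\pi)$ with $a_j^\s=0$ becomes $\prod_{j\in\Des(\pi),\,a_j^\s=0} z_0\,z_{\pi(1)}^2\cdots z_{\pi(j)}^2$. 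All of these matches are immediate by inspection.

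The one step meriting genuine verification is the middle product $\prod_{j=1}^n z_{\pi(1)}^{a_j^\s}\cdots z_{\pi(j)}^{a_j^\s}$. When $r=2$, Definition~\ref{wreathdefsignchange} gives $a_j^\s = (c_j - c_{j+1})\bmod 2 \in \{0,1\}$, so each factor with $a_j^\s=0$ contributes $1$ while each factor with $a_j^\s=1$ contributes $z_{\pi(1)}\cdots z_{\pi(j)}$; hence the product equals $\prod_{j:\,a_j^\s=1} z_{\pi(1)}\cdots z_{\pi(j)}$, which is exactly the corresponding factor in Theorem~\ref{newBtheorem}. With this observation the two summands agree term by term, and summing over $(\pi,\s)\in B_n = \Z_2\wr S_n$ completes the argument.

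I expect no genuine obstacle here: the result is a labeled specialization rather than an independent theorem, and the only substantive content is the range restriction $a_j^\s\in\{0,1\}$ that lets the exponent-valued product degenerate into a product over the single-color-change positions. Alternatively, one could re-derive the identity from scratch by triangulating $[0,2]^n$ exactly as in the proof of Theorem~\ref{wreathnewBthm} and tracking the integer points in the fundamental parallelepipeds of the resulting cones, but this would merely reproduce the $r=2$ instance of that argument and is therefore unnecessary.
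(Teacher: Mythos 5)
Your proposal is correct and coincides with the paper's own treatment: the paper offers no separate proof of Theorem~\ref{newBtheorem}, introducing it explicitly as the $r=2$ special case of Theorem~\ref{wreathnewBthm}, which is exactly your argument. Your verification that the product $\prod_{j=1}^n z_{\pi(1)}^{a_j^\s}\cdots z_{\pi(j)}^{a_j^\s}$ degenerates to $\prod_{j:\,a_j^\s=1} z_{\pi(1)}\cdots z_{\pi(j)}$ because $a_j^\s\in\{0,1\}$ when $r=2$ is the only substantive check, and you have carried it out correctly.
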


\begin{remark}
Observe that by specializing Theorem~\ref{newBtheorem} using $t:=z_0$ and $q:=z_1=\cdots =z_n$, we obtain a bivariate generating function identity involving the joint distribution for $(\des,\fmajor)$.
Theorem~\ref{newchowgesselflagthm} follows from this, as the pairs of statistics $(\natdes,\natfmaj)$ and $(\des,\fmajor)$ are equidistributed in $B_n$; this is a consequence of the bijection mapping every permutation $(\pi,\s)\in B_n$ to the permutation where the $\pi(j)$ for $\s_j=-1$ are reversed in order in the window for $(\pi,\s)$, while the $\s$-vector remains the same.

As an example, consider $[2^1\, 4^1\, 5^0\, 1^1 \, 3^1]\in B_5$.
The entries $2$, $4$, $1$, and $3$ correspond to the positions where $\s_j=-1$.
Hence, by reversing the order of these entries, we obtain a new permutation $[3^1\, 1^1\, 5^0\, 4^1 \, 2^1]$, and it is immediate that the descent positions for the new permutation using the natural order are the same as those in the first permutation using our standard order for wreath products.
Observe that for the first permutation we have $(\des,\fmajor)=(2,10)$, and for the second we also have $(\natdes,\natfmaj)=(2,10)$.

Hence, we may conclude that Theorem~\ref{newchowgesselflagthm} follows from the special case of $r=2$ in Theorem~\ref{wreathnewBthm}.
\end{remark}


\section{Type $D$}\label{dsection}

In this section we prove a multivariate identity related to negative statistics on Coxeter groups of type $D$.
One may consider type-$D$ Eulerian polynomials stemming from the signed permutations in $B_n$ with an even number of $-1$'s.
Let
\[
  D_n := \left\{ (\pi, \s) \in B_n : \, \s_1 \cdots \s_n = 1 \right\} .
\]
The definition of $\DNatDes(\pi, \s)$ and $\dnatdes(\pi, \s)$ in type $D$ is analogous to \eqref{Bdescentdef}, except that we now use the convention $\s_0 \pi(0) := - \s_2 \pi(2)$.
Brenti \cite[Theorem 4.10]{brentieulerian} proved that
\begin{equation}\label{deuleriangenfcteq}
  \sum_{ k \ge 0 } \left( \left( 2k+1 \right)^n - 2^{ n-1 } \left( B_n (k+1) - B_n(0) \right)  \right) t^k 
  \ = \ \frac{ \sum_{ (\pi, \s) \in D_n } t^{ \dnatdes(\pi, \s) } }{ (1-t)^{ n+1 } } \, ,
\end{equation}
where $B_n(x)$ is the $n$'th Bernoulli polynomial.
We focus on the following identity due to Biagioli in \cite{biagioli}, involving negative statistics in type $D$.

\begin{theorem}[Biagioli]\label{negDthm}

\[
  \sum_{ k \ge 0 } [k+1]_q^n \, t^k
  = \frac{ \sum_{(\pi,\s) \in D_n } t^{ \dndes(\pi,\s) } q^{ \dnmaj(\pi,\s) } }{ (1-t)(1-tq^n)\prod_{j=1}^{n-1} (1-t^2q^{2j}) } \, .
\]
\end{theorem}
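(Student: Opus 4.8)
The plan is to enumerate the lattice points of $\cone([0,1]^n)$ in two ways, exactly as in the proofs of Theorems~\ref{Atheorem} and~\ref{wreathcubeBthm}, but with a choice of ray generators tailored to type $D$. Since setting $z_0=t$ and $z_1=\cdots=z_n=q$ in $\sigma_{\cone([0,1]^n)}$ already produces the left-hand side $\sum_{k\ge 0}[k+1]_q^n\,t^k$, it suffices to reorganize the right-hand side into the claimed type-$D$ sum. First I would reuse the disjoint triangulation $\{\cone(\Delta_\pi):\pi\in S_n\}$ from the proof of Theorem~\ref{Atheorem}. The key new ingredient is the scaling of the ray generators: for each $\pi$ I would take $\e_0$ together with $2(\e_0+\e_{\pi(1)}+\cdots+\e_{\pi(j)})$ for $j=1,\dots,n-1$ and the \emph{unscaled} top generator $\e_0+\e_{\pi(1)}+\cdots+\e_{\pi(n)}$. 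The determinant of this generating matrix is $2^{n-1}$, so by Lemma~\ref{genconelemma} the fundamental parallelepiped contains exactly $2^{n-1}=|D_n|/n!$ lattice points, and the corresponding denominator is
\[
(1-z_0)\Bigl(1-z_0z_{\pi(1)}\cdots z_{\pi(n)}\Bigr)\prod_{j=1}^{n-1}\Bigl(1-z_0^2z_{\pi(1)}^2\cdots z_{\pi(j)}^2\Bigr),
\]
which specializes to $(1-t)(1-tq^n)\prod_{j=1}^{n-1}(1-t^2q^{2j})$.

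This is the geometric heart of the argument: leaving precisely one level generator unscaled, rather than scaling all $n$ of them by $2$ as in the type-$B$ proof of Theorem~\ref{wreathcubeBthm}, cuts the number of admissible sign patterns from $2^n$ down to $2^{n-1}$ and thereby builds in the index-two condition $\s_1\cdots\s_n=1$ defining $D_n$. Concretely, I would enumerate the parallelepiped points as $\p=\sum_{j=1}^{n-1}\alpha_j(\e_0+\e_{\pi(1)}+\cdots+\e_{\pi(j)})$ with $\alpha_j\in\{0,1\}$, then shift the entire fundamental parallelepiped by $\sum_{j\in\Des(\pi)}(\e_0+\e_{\pi(1)}+\cdots+\e_{\pi(j)})$ as in Section~\ref{sec:unim} to account for the strict inequalities. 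Mirroring the decomposition $\Z_r\wr S_n=\bigcup_\pi I_{r,n}\pi$, I would introduce increasing even-signed representatives, so that $D_n=\bigcup_{\pi\in S_n}I_n^D\pi$ with $|I_n^D|=2^{n-1}$, and associate to each point $\p$ the element of $D_n$ whose negation data is recorded by $(\alpha_1,\dots,\alpha_{n-1})$, the sign of the remaining value being forced by the even-sign condition. The monomial $\z^\p$ then factors as $\prod_{j\in\Des(\pi)}z_0z_{\pi(1)}\cdots z_{\pi(j)}$ times $\prod_{j:\alpha_j=1}z_0z_{\pi(1)}\cdots z_{\pi(j)}$.

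Summing $\sigma_{\cone(\Delta_\pi)}$ over $\pi$ recovers $\sigma_{\cone([0,1]^n)}$, and after the specialization $z_0=t$, $z_j=q$ the two displayed products become $t^{\des(\pi)}q^{\maj(\pi)}$ and $t^{\#\{j:\alpha_j=1\}}q^{\sum_{j:\alpha_j=1}j}$. The proof is then complete once I verify that, under the bijection, $\des(\pi)$ plus the number of marked positions equals $\dndes(\pi,\s)$ and $\maj(\pi)$ plus the sum of the marked positions equals $\dnmaj(\pi,\s)$. I expect this last verification to be the main obstacle, because it requires reconciling the geometric bookkeeping with Biagioli's type-$D$ definitions, whose descent and major-index conventions differ from the type-$B$ ones in how they treat the largest letter and the position-zero comparison $\s_0\pi(0):=-\s_2\pi(2)$. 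In particular I would need to check that leaving the top generator unscaled is exactly what renders the negation of the largest value invisible to the type-$D$ negative descent set, so that the $2^{n-1}$ marked patterns are weighted by the correct descent and major-index statistics; carefully handling this edge case, rather than the routine cone computation, is where the real work lies.
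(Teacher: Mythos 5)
Your proposal reproduces the paper's proof of Theorem~\ref{cubeDthm} (and its specialization to Theorem~\ref{negDthm}) essentially verbatim: the same triangulation $\{\cone(\Delta_\pi)\}$, the same ray generators with the middle $n-1$ scaled by $2$ and the top generator $\e_0+\e_{\pi(1)}+\cdots+\e_{\pi(n)}$ left unscaled (giving the $2^{n-1}$ parallelepiped points and the denominator factor $1-tq^n$), the same shift of the entire parallelepiped by $\sum_{j\in\Des(\pi)}(\e_0+\e_{\pi(1)}+\cdots+\e_{\pi(j)})$, and the same coset decomposition $D_n=\bigcup_{\pi\in S_n}I_{2,n}^*\pi$ (your $I_n^D$). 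One correction to the edge case you flag as the remaining work: in the paper's bijection one sets $\alpha_j=1$ if and only if $j+1\in\Neg((\rho,\s)^{-1})$, so the marks $\alpha_1,\dots,\alpha_{n-1}$ record the signs of the letters $2,\dots,n$, and it is the \emph{smallest} letter $1$ --- not the largest --- whose sign is forced by the parity condition and rendered invisible to the statistics (this is precisely the ``$\setminus\{0\}$'' in Definition~\ref{negDstats}); with this index shift the mark at position $j$ contributes $tq^{j}=tq^{(j+1)-1}$, matching the weight $j-1$ that $\dnmaj$ assigns to a negative letter, and your final verification then goes through exactly as in the paper, using $\Des(\pi)=\Des_A((\rho,\s)\pi)$.
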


\begin{definition}\label{negDstats} 
Using the order $-1<\cdots<-n<1<\cdots<n$ on $[-n,n]\setminus\{0\}$, for an element $(\pi,\s)\in D_n$, we define $\Des_A(\pi,\s)$, $\nega(\pi,\s)$, and $\des_A(\pi,\s)$ as for the group $B_n$.
Further, we set $\Neg(\pi,\s):=\NNeg(\pi,\s)$.
We define the \emph{type-$D$ negative descent multiset} as
\begin{align*}
\DNDes(\pi,\s) &:= \Des_A(\pi,\s)\cup \{\pi(i) -1 : \s_i=-1 \} \setminus \{0\} \\
&= \Des_A(\pi,\s)\cup \{j-1: j\in \Neg((\pi,\s)^{-1})\setminus \{1\}\} \, .
\end{align*}
The \emph{type-$D$ negative descent statistic} is 
\[
\dndes(\pi,\s):=\#\DNDes(\pi,\s) \, .
\]
The \emph{type-$D$ negative major index} is
\[
\dnmaj(\pi,\s):=\sum_{i\in \DNDes(\pi,\s)}i \, .
\]
\end{definition}

\begin{example}\label{ex:typeD}
Let $(\pi,\s)=[2^1\, 4^1\, 5^0\, 1^1\, 3^1]\in D_5$.
Then $\Des_A(\pi,\s)=\{3\}$ and $\Neg((\pi,\s)^{-1})=\{1,2,3,4\}$, hence $\DNDes(\pi,\s)=\{3\}\cup\{1,2,3\}$ and $\dnmaj(\pi,\s)=9$.
\end{example}

\begin{remark}
Biagioli originally defined $\dnmaj$ and $\dndes$ in \cite{biagioli} using the natural order, but the Theorem~\ref{negDthm} holds for either definition.
\end{remark}

Our multivariate generalization of Theorem~\ref{negDthm} is as follows.
Let $I_{2,n}^*\subseteq I_{2,n}$ denote the elements $(\rho,\s)\in I_{2,n}$ satisfying $\s_1\s_2\cdots \s_n=1$.
It is straightforward from our discussion regarding $I_{r,n}$ that
\[
D_n=\bigcup_{\pi\in S_n}I_{2,n}^*\pi \, .
\]

\begin{theorem}\label{cubeDthm}
\begin{align*}
 \sum_{k \ge 0 } \prod_{j=1}^n [k+1]_{ z_j } \, z_0^k = & \\
\sum_{\pi\in S_n} \sum_{(\rho,\s)\in I_{2,n}^*} & \frac{\displaystyle \prod_{j\in \Des(\pi)} z_0z_{\pi(1)}z_{\pi(2)}\cdots z_{\pi(j)} \prod_{j\in \Neg((\rho,\s)^{-1})\setminus \{1\}} z_0z_{\pi(1)}z_{\pi(2)}\cdots z_{\pi(j-1)} }{\displaystyle (1-z_0)(1-z_0z_{\pi(1)}\cdots z_{\pi(n)})\prod_{j=1}^{n-1} \left(1-z_0^2z_{\pi(1)}^2\cdots z_{\pi(j)}^2 \right) } \, .
\end{align*}
\end{theorem}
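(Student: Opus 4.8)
The plan is to mirror the proof of Theorem~\ref{wreathcubeBthm}, modifying only the choice of ray generators so as to encode the defining sign condition $\s_1\cdots\s_n=1$ of $D_n$. I would begin with the disjoint triangulation of $\cone([0,1]^n)$ into the cones $\cone(\Delta_\pi)$, $\pi\in S_n$, from the proof of Theorem~\ref{Atheorem}. The denominator of the asserted identity dictates the generators: matching the factors $(1-z_0)$, the $(1-z_0^2z_{\pi(1)}^2\cdots z_{\pi(j)}^2)$ for $1\le j\le n-1$, and $(1-z_0z_{\pi(1)}\cdots z_{\pi(n)})$, I would take for $\overline{\cone(\Delta_\pi)}$ the generators
\[
\e_0,\qquad 2(\e_0+\e_{\pi(1)}+\cdots+\e_{\pi(j)})\ \ (1\le j\le n-1),\qquad \e_0+\e_{\pi(1)}+\cdots+\e_{\pi(n)}\,.
\]
The type-$D$ feature is that, unlike in the type-$B$ argument where every partial-sum generator is doubled, here the full-sum generator is left unscaled; this is precisely what yields Biagioli's factor $(1-tq^n)$ after specialization.

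These generators scale the unimodular minimal generators by $c_0=c_n=1$ and $c_j=2$ for $1\le j\le n-1$, so by Lemma~\ref{genconelemma} the fundamental parallelepiped of $\overline{\cone(\Delta_\pi)}$ holds $2^{n-1}$ integer points, namely $\sum_{i=1}^{n-1}\alpha_i(\e_0+\e_{\pi(1)}+\cdots+\e_{\pi(i)})$ with $\alpha_i\in\{0,1\}$. Summing $2^{n-1}$ over $\pi\in S_n$ recovers the order $2^{n-1}\,n!$ of $D_n$, a first consistency check. Applying either shifting technique of Section~\ref{sec:unim}, the integer points of $\Pi_{\cone(\Delta_\pi)}$ arise by adding the shift $\sum_{j\in\Des(\pi)}(\e_0+\e_{\pi(1)}+\cdots+\e_{\pi(j)})$, which produces the first product in the numerator exactly as in the type-$B$ proof.

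The heart of the matter, and the main obstacle, is to biject the $2^{n-1}$ coefficient vectors $(\alpha_1,\dots,\alpha_{n-1})$ with the increasing elements $I_{2,n}^*$ for a fixed $\pi$. By Proposition~\ref{prop:IncNeg} each $(\rho,\s)\in I_{2,n}$ is determined by $\Neg((\rho,\s)^{-1})\subseteq[n]$, and lying in $I_{2,n}^*$ is exactly the condition that $\#\Neg((\rho,\s)^{-1})$ be even. I would set $\alpha_i=1$ if and only if $i+1\in\Neg((\rho,\s)^{-1})$; this records the intersection of $\Neg((\rho,\s)^{-1})$ with $\{2,\dots,n\}$, after which the membership of $1$ is forced uniquely by parity. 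This is a bijection precisely because $[n]$ has $2^{n-1}$ subsets of even size, and it accounts for the two characteristic features of the statement: the index shift $j\mapsto j-1$, since $\alpha_i=1$ contributes the monomial $z_0z_{\pi(1)}\cdots z_{\pi(i)}$ with $i=j-1$; and the exclusion of $\{1\}$ from the second product, since the case $j=1$ would attach to the unscaled generator $\e_0$ and so carries no monomial weight, its role being only that of the parity bit fixed by the sign condition. The care needed here is in checking this map is well defined and bijective, and in invoking the invariance of $\Neg((\rho,\s)^{-1})$ under right multiplication by $\pi$ from Proposition~\ref{prop:IncNeg}, so that the element attached to $\p$ is genuinely $(\rho,\s)\pi\in D_n$.

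With the bijection established, reading off $\z^\p$ for each integer point gives exactly the summand indexed by $(\rho,\s)\in I_{2,n}^*$ in the stated numerator, so that $\sigma_{\cone(\Delta_\pi)}(z_0,\dots,z_n)$ equals the corresponding inner sum. Summing over $\pi\in S_n$, using $\sigma_{\cone([0,1]^n)}=\sum_{\pi\in S_n}\sigma_{\cone(\Delta_\pi)}$ from the disjoint triangulation together with the expansion $\sigma_{\cone([0,1]^n)}(z_0,\dots,z_n)=\sum_{k\ge0}\prod_{j=1}^n[k+1]_{z_j}z_0^k$ computed in the proof of Theorem~\ref{Atheorem}, then finishes the argument.
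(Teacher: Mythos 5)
Your proposal is correct and follows the paper's own proof essentially verbatim: the same triangulation of $\cone([0,1]^n)$ into the cones $\cone(\Delta_\pi)$, the same scaled ray generators with only the full-sum generator $\e_0+\e_{\pi(1)}+\cdots+\e_{\pi(n)}$ left unscaled, the same parametrization of the $2^{n-1}$ lattice points of the fundamental parallelepiped by vectors $(\alpha_1,\ldots,\alpha_{n-1})\in\{0,1\}^{n-1}$, and the same bijection $\alpha_j=1 \Leftrightarrow j+1\in\Neg((\rho,\s)^{-1})$ with $\s_1$ recovered from the parity condition $\s_1\cdots\s_n=1$ (via Proposition~\ref{prop:IncNeg}), exactly as in the paper. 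One tiny caution: since $c_j=2$ for $1\le j\le n-1$, the two shifting techniques of Section~\ref{sec:unim} yield genuinely different parametrizations here, and the uniform shift by $\sum_{j\in\Des(\pi)}(\e_0+\e_{\pi(1)}+\cdots+\e_{\pi(j)})$ that you write down is specifically the shift-the-entire-parallelepiped technique (the one the paper uses), not ``either'' technique.
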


\begin{proof}
We begin with the triangulation of $\cone ([0,1]^n)$ into the set of cones 
$
\left\{ \cone(\Delta_\pi) : \, \pi \in S_n \right\}
$
found in the proof of Theorem~\ref{Atheorem}.
For $\cone(\Delta_\pi)$ we use the non-unimodular ray generators 
\[
\e_0, 2(\e_0+\e_{\pi(1)}), \ldots, 2(\e_0+\e_{\pi(1)}+\cdots + \e_{\pi(n-1)}), \e_0+\e_{\pi(1)}+\cdots + \e_{\pi(n)} \, .
\]
There are $2^{n-1}$ integer points in the fundamental parallelepiped for $\cone(\Delta_\pi)$ using these ray generators.
Each such point can be expressed as a linear combination of the middle $n-1$ generators with coefficients $\alpha_j\in \{0,\frac 1 2\}$, plus a sum of shifting vectors for those integer points that need to be shifted away from the boundary of the cone.
As in our proof of Theorem~\ref{wreathcubeBthm}, we will use the technique of shifting the entire parallelepiped.

Thus, every integer point $\p$ in the (shifted) fundamental parallelepiped for $\cone(\Delta_\pi)$ can be uniquely expressed as
\[
\p=\sum_{j\in \Des(\pi)} (\e_0+\e_{\pi(1)}+\cdots + \e_{\pi(j)}) + \sum_{j=1}^{n-1} \alpha_j(\e_0+\e_{\pi(1)}+\cdots + \e_{\pi(j)}) \, 
\]
with $\alpha_j\in \{0, 1\}$.
Associate to the point $\p$ the element $(\rho,\s)\pi \in D_n$, where $\alpha_j=1$ if and only if $j+1\in \Neg((\rho,\s)^{-1})=\Neg([(\rho,\s)\pi]^{-1})$.

As in the proof of Theorem~\ref{wreathcubeBthm}, this correspondence creates a bijection between the elements of $D_n$ and the (appropriately shifted) integer points in the fundamental parallelepipeds for the cones over the $\Delta_\pi$.
Our choice of $(\rho,\s)\pi$ associated to $\p$ is unique because the condition $\alpha_j=1$ if and only if $j+1\in \Neg((\rho,\s)^{-1})=\Neg([(\rho,\s)\pi]^{-1})$ determines the signs placed on the letters $\{2,3,\ldots,n\}$ when $(\rho,\s)\pi$ is written in window notation.
Hence, $\s_1$ is determined from these $n-1$ signs and the fact that $\s_1\cdots\s_n=1$.
This bijection encodes $I_{2,n}^*$ as the integer points in the fundamental parallelepiped for $\cone(\Delta_{\Id})$.

Thus
\[
  \sigma_{\cone(\Delta_\pi)}(z_0,\ldots,z_n) 
  = \sum_{(\rho,\s)\in I_{2,n}^*}  \frac{\displaystyle \prod_{j\in \Des(\pi)} z_0z_{\pi(1)}z_{\pi(2)}\cdots z_{\pi(j)} \prod_{j\in \Neg((\rho,\s)^{-1})\setminus \{1\}} z_0z_{\pi(1)}z_{\pi(2)}\cdots z_{\pi(j-1)} }{\displaystyle (1-z_0)(1-z_0z_{\pi(1)}\cdots z_{\pi(n)})\prod_{j=1}^{n-1} \left(1-z_0^2z_{\pi(1)}^2\cdots z_{\pi(j)}^2 \right) }  \, .
\]
This completes our proof, since from our triangulation it follows that
\[
\sigma_{\cone ([0,1]^n)}(z_0,\ldots,z_n) = \sum_{\pi\in S_n} \sigma_{\cone(\Delta_\pi)}(z_0,\ldots,z_n) \, . \qedhere
\]
\end{proof}

\begin{proof}[Proof of Theorem~\ref{negDthm}]
Setting $t:=z_0$ and $q:=z_1=\cdots =z_n$ in Theorem~\ref{cubeDthm} yields our desired form on the left-hand side of our identity, while the denominator of the right-hand side uniformly becomes 
\[
(1-t)(1-tq^n) \prod_{ j=1 }^{n-1} \left( 1 - t^2 q^{2j} \right) . 
\]
Each element $\displaystyle (\rho,\s)\pi\in \bigcup_{\pi\in S_n} I_{2,n}^*\pi$ contributes to the numerator on the right-hand side of our identity a summand of
\[
\prod_{j\in \Des(\pi)}tq^j \prod_{j\in \Neg([(\rho,\s)\pi ]^{-1})} tq^{j-1} .
\]
Because $\Des(\pi)=\Des_A((\rho,\s)\pi)$, it follows that 
\[
\prod_{j\in \Des(\pi)}tq^j \prod_{j\in \Neg([(\rho,\s)\pi ]^{-1})} tq^{j-1} = t^{\dndes((\rho,\s)\pi)}q^{\dnmaj((\rho,\s)\pi)} ,
\]
hence our proof is complete.
\end{proof}


\bibliographystyle{plain}
\bibliography{eulerianpol}

\end{document}